\documentclass[hidelinks,12pt]{article}
\def\currentversion{arxiv} 
\def\compareversion{npjqi}

\usepackage[T1]{fontenc}
\usepackage[utf8]{inputenc} 
\usepackage{graphicx} 
\usepackage{amssymb}
\usepackage{amsmath}
\usepackage{amsthm}
\usepackage{doi}
\newtheorem{theorem}{Theorem}[]
\newtheorem{proposition}[theorem]{Proposition}
\newtheorem{lemma}[theorem]{Lemma}
\newtheorem{remark}{Remark}[]

\newtheorem{thmA}{Theorem}

\usepackage{soul}

\usepackage{graphicx} %
\usepackage[left=25mm,right=25mm,top=25mm,bottom=25mm]{geometry}

\makeatletter
\let\saved@setaddresses\@setaddresses %
\let\@setaddresses\relax              %
\makeatother   

\usepackage[baseline]{euflag}
\usepackage[shortlabels]{enumitem}

\newcommand{\supp}{\mathop{\mathrm{supp}}\nolimits}
\newcommand{\C}{\mathbb{C}}
\newcommand{\R}{\mathbb{R}}
\newcommand{\Q}{\mathbb{Q}}
\newcommand{\N}{\mathbb{N}}
\newcommand{\GL}[1]{\mathrm{GL}(#1)}
\newcommand{\Tr}{\mathrm{Tr}}
\newcommand{\Span}{\mathrm{Span}}
\newcommand{\rank}{\mathrm{rank}}

\DeclareMathOperator{\Hom}{Hom}
\DeclareMathOperator{\re}{re}
\DeclareMathOperator{\im}{im}

\newcommand{\igor}[1]{{\color{orange}$\big[\![$~\raisebox{.7pt}{I}\!\!\:\raisebox{-.7pt}{K}: \textit{#1}~$]\!\big]$}}

\newcommand{\nando}[1]{{\color{blue}$\big[\![$~\raisebox{.7pt}{N}\!\!\:\raisebox{-.7pt}{L}: \textit{#1}~$]\!\big]$}}

\title{Robust self-testing with CHSH mod 3}
\author{Igor Klep\thanks{University of Ljubljana, Faculty of Mathematics and Physics, Jadranska 21, 1000 Ljubljana \&
University of Primorska, Faculty of Mathematics, Natural Sciences and Information Technologies,
Glagoljaška 8, 6000 Koper, Slovenia. Email: \texttt{igor.klep@fmf.uni-lj.si}} \and Nando Leijenhorst\thanks{Universit\'{e} de Toulouse; LAAS-CNRS, 7 avenue du colonel Roche, F-31400 Toulouse, France. Email: \texttt{nando.leijenhorst@laas.fr}} \and Victor Magron\thanks{Universit\'{e} de Toulouse; LAAS-CNRS, 7 avenue du colonel Roche, F-31400 Toulouse, France. Email: \texttt{victor.magron@laas.fr}}}
\date{\today}

\begin{document}

\maketitle

\begin{abstract}
The CHSH mod~3 Bell inequality is a natural testbed for higher-dimensional quantum nonlocality, yet its maximal quantum violation and self-testing properties have remained unresolved. We determine its exact maximal quantum value and show that, up to unitary equivalence and the natural symmetries of the inequality, it admits a unique optimal irreducible strategy; equivalently, there are four symmetry-related optimal irreducible strategies. Each of these strategies uses a maximally entangled two-qutrit state. We further prove that any strategy whose value is within $\varepsilon$ of the optimum is $O(\sqrt{\varepsilon})$-close, up to local isometries, to a direct sum of 
optimal irreducible strategies.
\end{abstract}
\ifx\compareversion\currentversion
\section*{Introduction}
\else
\section{Introduction}
\fi
Self-testing is a central concept in device-independent quantum information processing, enabling the certification of quantum states and measurements solely from observed correlations. 
It provides one with a powerful primitive for tasks such as verified quantum computation \cite{reichardt2013classical} and randomness expansion \cite{mayers04}. 
A self-testing protocol in quantum mechanics is a way to verify that a set of measurements and/or a state are (equivalent to) a specific set of measurements and/or a specific state. 
For example, certain measurements $A_i$ and states $\psi$ admit unique correlations $\psi^* A_i\psi$, thus discovering that a set of measurements $\tilde A_i$ and a state $\tilde \psi$ admit the same correlations implies that $(\{\tilde A_i\}, \tilde \psi)$ is equivalent to $(\{A_i\}, \psi)$. Here, equivalence is meant up to `trivial' operations that transform a set of operators and a state while keeping the correlations the same, such as unitary transformations or extending the space by an auxiliary Hilbert space where the operators act as identity operators. 

Self-testing is also possible using Bell inequalities. 
A Bell inequality is an inequality in the correlations of two systems that cannot be violated in classical mechanics, but can be violated in quantum mechanics. 
Introduced in \cite{bell1964}, such inequalities have played a central role in experimentally testing quantum theory. 
Their violation certifies the presence of entanglement and demonstrates that the observed correlations cannot be explained by locally causal classical models. 
If a Bell inequality has a unique set of measurements and state that maximize the violation, it can be used for self-testing. 
The most basic and extensively analyzed Bell inequality was introduced by Clauser, Horne, Shimony, and Holt (CHSH) in \cite{clauser1969proposed}. 
In the CHSH setup, two separate devices are considered, each with two possible measurement settings and two possible outcomes. 
It is well established that this inequality reaches its maximal violation when performing maximally incompatible measurements on each qubit of a maximally entangled two-qubit state. 
Numerous extensions of the CHSH inequality have also been proposed for Bell scenarios involving measurements with $d$ possible outcomes. 
The CHSH mod $d$ Bell inequality, introduced in \cite{buhrman_causality_2005}, is a generalization of the famous CHSH inequality, where the measurement settings and outcomes are no longer binary but take values from the set $\{0,1,\dots,d-1\}$ for some integer $d$, and the winning condition is evaluated modulo $d$. 
Although this functional represents a seemingly natural extension of the CHSH inequality, it proves to be surprisingly difficult to analyze. Buhrman and Massar prove in \cite{buhrman_causality_2005} the upper bound 
\[
\frac{1}{d} + \frac{d-1}{d\sqrt{d}}
\]
on the maximal value of the Bell function that can be reached by quantum strategies.
This is the best possible bound for $d=2$ (the standard CHSH inequality), but does not seem sharp for $d > 2$. For $d=3$, Ji et al. \cite{ji_multi-setting_2008} propose a strategy with value
\[
\frac{1}{3} + \frac{2\cos(\pi/18)}{3\sqrt{3}},
\]
and Liang, Lim and Deng \cite{liang_reexamination_2009} give a matching numerical upper bound. However, until now no proof of the exact maximal quantum value was available.
The authors of \cite{kaniewski2019maximal} adapted the CHSH mod $d$ inequality to derive the first analytical self-testing result that does not depend on self-testing for two-dimensional systems. 
A partial self-testing result for the maximally entangled state of two qutrits was established through numerical methods using a different Bell inequality \cite{salavrakos2017bell}. 

In this paper, we investigate whether the CHSH mod 3 Bell inequality can be used for self-testing.
This differs from approaches that design a protocol or Bell inequality specifically to self-test a particular state, e.g., the SATWAP inequality proposed in \cite{salavrakos2017bell}, or the ones proposed in \cite{bamps_sum-of-squares_2015, meyer_robustly_2025}. 

In practice, one can never measure the correlations or the maximal violation of a Bell inequality exactly. It is therefore natural to consider robust self-testing. Informally, a self-test is robust if a measured value close to the optimum (in case of the maximal violation)  implies that the set of measurements and the state is close to a set of measurements and state corresponding to the maximal violation.  
In \cite{manvcinska2024constant}, the authors obtained such a robust self-testing statement for maximally entangled states based on four binary measurements. 
This result is derived by reformulating the robust self-testing method based on the Gowers–Hatami group-theoretic approach \cite{gowers_inverse_2017} into an adequate algebraic framework. 
As in \cite{manvcinska2024constant}, we will leverage this group-theoretic approach to prove a robust self-testing statement for CHSH mod 3. 
We refer to \cite{supic_self-testing_2020} for a review of (robust) self-testing. 

To find an upper bound on the maximal violation of a Bell inequality, one can use the (dual of the) Navascu\'es-Pironio-Ac\'in (NPA) hierarchy \cite{navascues_convergent_2008}, the noncommutative analog of Lasserre's moment-SOS hierarchy \cite{lasserre2001global}, that uses sum-of-Hermitian-squares polynomials \cite{burgdorf_optimization_2016}. Each level of the hierarchy corresponds to a semidefinite program, and an exact feasible solution certifies an upper bound on the maximum violation. Higher levels give better bounds but are more difficult to compute, and the hierarchy converges to the maximal violation when the level $n \to \infty$. In certain cases, the hierarchy admits finite convergence, i.e., there is a finite $n$ such that the $n$-th level gives the maximal violation.  
However, there are also cases that do not have finite convergence (see, e.g., \cite{fanizza_npa_2025}) as a consequence of recently established quantum complexity results and the refutation of Connes’ embedding conjecture \cite{ji2021mip}.

To use sum-of-squares certificates for self-testing proofs, 
one needs an exact optimal solution to the corresponding semidefinite program.
This means that self-testing with Bell inequalities has only been done using Bell inequalities for which it is possible to find an analytic expression of a sum-of-squares certificate, possibly by identifying numbers in a numerical certificate. This leaves many open cases for which a numerical certificate is known, with or without matching constructions of strategies, but where it is not known whether there is a unique optimal strategy (see, e.g., \cite[Section~6]{hrga_certifying_2024} for a list of cases with numerical optimality).

Our first contribution is to show that the rounding method of \cite{cohn_optimality_2024} can be used to overcome this. This rounding method can round a high-precision solution to an exact optimal solution of a (real) semidefinite program, provided there is an exact optimal solution over a number field of low algebraic degree. The rounding method returns a decomposition $Z = T \hat Z T^{\sf T}$ of the positive semidefinite matrix variable in the semidefinite program, where $\hat Z$ is positive definite.

Our second contribution is to observe that self-testing results can already be derived using the rectangular matrix $T$, which is typically much simpler than the matrices $Z$ and $\hat Z$. In particular, it is not necessary to give an exact factorization of $Z$ or $\hat Z$, 
and hence not necessary to write down the exact polynomials appearing in the sum-of-squares certificate.

\if{
Let $p \in \C\langle X, Y\rangle$ be a non-commutative polynomial in variables $X = (X_1, \dots, X_k)$ and $Y = (Y_1, \dots, Y_l)$, and consider (projection-valued) measurements $\{A_i\}_{i=1}^k$ and $\{B_j\}_{j=1}^l$ on separable Hilbert spaces $\mathcal H_A$ and $\mathcal H_B$ respectively, and a state $\psi \in \mathcal H_A \otimes \mathcal H_B$. The inequality 
\[ 
\beta(A, B, \psi) = \psi^* p(A\otimes I_B, I_A \otimes B) \psi \leq \beta_c,
\]
 where $\beta_c$ is the maximum value of $\beta(A, B, \psi)$ that can be obtained through a classical strategy (that is, $\psi = \psi_A \otimes \psi_B$ with $\psi_A \in \mathcal H_A$ and $\psi_B \in \mathcal H_B$), is called a Bell inequality. We denote the maximum value that can be obtained in quantum mechanics by $\beta_q$, and we call $(\{A_i\}_i, \{B_j\}_j, \psi)$ a strategy for the polynomial $p$, or simply a strategy when the polynomial is clear from the context.

Now, let $\mathcal{I}$ be the ideal of universal relations satisfied by all feasible measurement operators $A, B$. 
Suppose $g_1, \dots, g_N\in \C\langle X, Y\rangle$ are such that $p = \lambda -  \sum_j g_j^*g_j + q$ for some $\lambda \in \R$ and $q \in \mathcal I$, then
\begin{equation}\label{eq:poly_upperbound}
 \psi^* p(A, B) \psi = \lambda - \sum_j (g_j(A, B)\psi)^* g_j(A, B)\psi \leq \lambda
\end{equation}
for all strategies $(A, B, \psi)$. Thus $\lambda$ is an upper bound on the maximum value of the Bell function $p$. This is the basis of non-commutative polynomial optimization. See \cite{burgdorf_optimization_2016} for a thorough introduction. 
Such $\lambda$, $q$ and $g_j$ can be found using semidefinite programming \cite{vandenberghe1996semidefinite}. 
Indeed, any sum-of-squares polynomial can be written as $v^*Zv$, where $Z$ is Hermitian positive semidefinite ($Z \succeq 0$), and $v$ is a so-called \emph{border vector} of which the entries form a basis of the non-commutative polynomials of degree at most the maximum degree of $g_j$.\igor{beware dependence on $q$??}  \nando{It is still right though? The degree needed for the border vector only depends on the $g_j$ polynomials, even though $\sum_j g_j^*g_j + q$ might be of different degree}
The explicit semidefinite program can then be written as 
\begin{equation}\label{eq:general_sdp}
    \begin{aligned}
        & \text{inf} && \lambda \\
        & \text{s.t.} &&  \lambda - p
        = v^* Z v \mod \mathcal I,\\
        &&& Z \succeq 0.
    \end{aligned}    
\end{equation}
Solving such a semidefinite program gives a numerical solution, and one can generally find a rational sum-of-squares polynomial with a slightly worse $\lambda$ by relying on a so-called \emph{rounding and projection} algorithm. 
The initial rounding and projection algorithm has been applied for unconstrained polynomial optimization in \cite{parrilo}. 
Noncommutative extensions have been provided in \cite{cafuta2015rational,naceur2025certified}. 

By fixing the entries of the border vector $v$, this gives a finite semidefinite program. The idea of the NPA hierarchy is to increase the maximum degree step by step to get better bounds: the $n$-th level of the hierarchy sets $v = v_n$ to be the vector whose entries form a basis of the space of polynomials of degree at most $n$, and thus takes into account sum-of-squares polynomials of degree at most $2n$. Higher levels give better bounds but are more difficult to compute, and the hierarchy converges to the maximal violation when $n \to \infty$. In certain cases, the hierarchy admits finite convergence, i.e., there is a finite $n$ such that the $n$-th level gives the maximal violation.  
However, there are also cases that do not have finite convergence (see, e.g., \cite{fanizza_npa_2025}) as a consequence of recently established quantum complexity results and the refutation of Connes’ embedding conjecture \cite{ji2021mip}.  

Now suppose that we have a sum-of-squares certificate with $\lambda = \beta_q$, and $(\{A_i\}_i, \{B_j\}_j, \psi)$ is an optimal strategy for $p$, i.e., a strategy with $\beta(A, B, \psi) = \beta_q$. Then the inequality in \eqref{eq:poly_upperbound} is actually an equality, and thus $g_j(A, B) \psi = 0$ for all $j$. Such equalities are used in self-testing proofs to show that the strategy behaves in a certain way, which generally can be used to classify the optimal strategies (see, e.g., \cite{cui_generalization_2020}). For this, it is imperative that the sum-of-squares certificate is both exact and optimal. This means that self-testing with Bell inequalities has only been done using Bell inequalities for which it is possible to find an analytic expression of an Hermitian sum-of-squares certificate, possibly by identifying the numbers in a numerical certificate. This leaves many open cases for which a numerical certificate is known, with or without matching constructions of strategies, but where it is not known whether there is a unique optimal strategy (see, e.g., \cite[Section~6]{hrga_certifying_2024} for a list of cases with numerical optimality).

In this paper, we propose to use the rounding method of \cite{cohn_optimality_2024} to remedy this. This rounding method can find exact optimal solutions to (real) semidefinite programs, provided there is an exact solution over a number field of low algebraic degree. The rounding procedure returns an optimal solution of the form
\[
Z = T \hat Z T^{\sf T},
\]
where $\hat Z$ is positive definite and $T$ is rectangular. 
Note that to find exact polynomials $g_j$ that form a sum-of-squares certificate, it would be required to factor $Z$. Since $\hat Z$ is positive definite, such a factorization exists, but for certification of the upper bound on $\beta_q$ such a factorization is not required.

The main downside of the rounding procedure is that it requires a highly accurate solution to the semidefinite program, which in turn requires solving the semidefinite program with high-precision arithmetic. This greatly constrains the maximum size of the semidefinite program, so for the Bell inequality in this paper we use symmetry reduction and a particular choice of the border vector $v$  to reduce the size. 

Our second contribution is to note that this factorization also suffices for proving self-testing statements. Since each $g_j$ is a linear combination of the polynomials in the vector $T^{\sf T} v$, we have $g_j(A, B)\psi = 0$ for every $j$ if and only if $T_i^{\sf T}v(A,B)\psi = 0$ for every $i$, $T_i$ is the $i$-th row of $T$. Thus if the polynomial $g_j$ can be used to prove self-testing, then so can such a decomposition. This is especially important because a factorization of $Z$ will generally contain numbers of large bit-size (i.e., algebraic numbers of which the coefficients are rationals with large numerator and denominator), while the matrices $T$ can be much simpler. Intuitively, $T$ encodes exactly the information required for a strategy to be optimal.
}\fi

Our third contribution is to apply these techniques to the original CHSH mod 3 Bell inequality introduced by Buhrman and Massar in \cite{buhrman_causality_2005}. 
We give an exact certificate, which proves that the strategy of \cite{ji_multi-setting_2008} is optimal. 
Analytical self-testing proofs based on (concise) sum-of-squares certificates have been provided in  \cite{kaniewski2019maximal} and \cite{sarkar2021self}; however, those works treat different and more tractable inequalities than the original CHSH mod 3 Bell inequality. 
The latter work \cite{sarkar2021self} focuses on the SATWAP inequality proposed in \cite{salavrakos2017bell}. 
In the former work \cite{kaniewski2019maximal}, the CHSH mod 3 inequality is modified in such a way that a self-test statement can be proved. 
By contrast, our approach tackles the original CHSH mod 3 inequality itself, making it an ideal benchmark: although it does not appear to admit a simple sum-of-squares decomposition, it has numerically tight bounds and still allows the extraction of the optimal measurements.


%
{Closely related to self-testing is the problem of determining the optimal strategies: to prove that a Bell inequality yields a self-test, one must show that its maximal violation determines a unique optimal strategy.}
A well-known method to find such optimizers is by using an optimal solution to the dual semidefinite program: the moment matrix. 
Under a condition called flatness (also called the rank-loop condition \cite{navascues_convergent_2008}), this can be used to determine an optimal strategy \cite{burgdorf_optimization_2016}. 
Alternatively, one can follow the logic of self-testing proofs and use equations derived from an exact sum-of-squares certificate to recover an optimal strategy.
This can be done by using the equations directly as in \cite{cui_generalization_2020}, or, as noted in \cite{watts_noncommutative_2023}, by using a more general approach using Gr\"obner bases \cite{mora1994introduction}. 
Another contribution is to show that these two methods are directly related. 

The following theorem summarizes the main contributions above:
\begin{thmA}[Theorem \ref{thm:chsh_exact_violation} and Theorem \ref{thm:unique}]\label{thm:a}
The CHSH mod $3$ Bell function has maximal quantum value $\frac{1}{3} + \frac{2\cos(\pi/18)}{3\sqrt{3}}$. 
Moreover, up to unitary transformations and the natural symmetries of the Bell inequality, there is a unique corresponding irreducible strategy.
\end{thmA}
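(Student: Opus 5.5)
The proof has two halves: a lower bound from an explicit strategy, and a matching upper bound from an exact sum-of-squares certificate that also forces the structure of every optimal strategy.

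\textbf{Lower bound.} Write the Bell function with generalized observables. For each input $x$, set $A_x=\sum_a \omega^a P_a^x$ with $\omega=e^{2\pi i/3}$, so that $A_x$ is a unitary with $A_x^3=I$; define $B_y$ in the same way. By Fourier analysis, the winning probability of CHSH mod 3 becomes
\[
\tfrac13+\tfrac19\sum_{x,y}\sum_{k=1,2}\omega^{-kxy}\,\psi^*(A_x^k\otimes B_y^{-k})\psi .
\]
I then plug in the strategy of Ji et al.: a maximally entangled two-qutrit state and explicit $3\times 3$ unitaries. This shows the value $\frac13+\frac{2\cos(\pi/18)}{3\sqrt3}$ is attained.

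\textbf{Upper bound.} I would set up a symmetry-reduced level of the NPA hierarchy, with the relations $A_x^3=I$, $A_xA_x^*=I$ (and likewise for $B_y$) built into the ideal $\mathcal I$. The symmetry group is generated by relabelings $x\mapsto x+c$, outcome shifts, and complex conjugation. The border vector should be chosen carefully, for example monomials $A_x^k$, $B_y^k$ and mixed degree-two terms. I would solve this in high precision and use the rounding method of \cite{cohn_optimality_2024} to obtain an exact decomposition $\lambda-p\equiv v^*T\hat Z T^{*}v \bmod \mathcal I$, with $\hat Z\succ0$ and entries in $\Q(\cos(\pi/18),\sqrt3)$ (a cubic extension). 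Checking this identity exactly, and checking $\hat Z\succ 0$ exactly, gives $\lambda=\frac13+\frac{2\cos(\pi/18)}{3\sqrt3}$ as an upper bound.

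\textbf{Uniqueness.} For an optimal strategy, $\psi^*(\lambda-p)\psi=0$. Since $\hat Z\succ0$, this forces $T^*v(A,B)\psi=0$, i.e. each row of $T^*$ gives a linear relation among the vectors $w(A,B)\psi$ for monomials $w$ in $v$. Next I would use these relations, as in \cite{cui_generalization_2020} or via a Gröbner basis computation \cite{watts_noncommutative_2023}, to express every $B$-monomial acting on $\psi$ as an $A$-monomial acting on $\psi$. This lets me derive algebraic relations among the $A_x$ alone on the cyclic subspace $\overline{\mathcal A\psi}$. In particular I expect commutation-type identities such as $A_0A_1=\mu A_1A_0$-like relations (possibly twisted by constants in the number field) that determine a finite-dimensional algebra. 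Its irreducible $*$-representations should be $3$-dimensional, with finitely many choices for the scalar parameters. These choices are permuted transitively by the natural symmetries, which gives four irreducible strategies forming one orbit. Finally, the Schmidt coefficients are forced to be equal, because the relations make the reduced state commute with an irreducible algebra; so the state is maximally entangled.

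\textbf{Main obstacle.} The hard step is obtaining the exact certificate: a small enough symmetry-adapted border vector on which the NPA level is already tight, and a rounding that succeeds over the cubic field. A secondary difficulty is extracting clean operator relations from $T$, rather than messy linear combinations, so that the irreducible representations can be classified by hand. If this extraction is messy, I would fall back on a flatness/rank-loop argument applied to the dual moment matrix.
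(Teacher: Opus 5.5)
Your overall route is the paper's. It builds an exact certificate $Z=T\hat Z T^{\sf T}$ by rounding a symmetry-reduced SDP with \cite{cohn_optimality_2024}, uses the Ji et al.\ strategy for the lower bound, and classifies optimal strategies through the kernel relations $T^{*}v(X,Y)\psi=0$ processed with a Gr\"obner basis. In the paper these relations force each party's operators to generate $G\cong C_3\times((C_3\times C_3)\rtimes C_3)$ of order $81$; checking the irreducible representations of $G\times G$ then leaves exactly four optimal ones, all of the form $3\otimes 3$.

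The step that fails as you state it is the last one. With the symmetries you list (relabelings $x\mapsto x+c$ with compensating outcome shifts, constant outcome shifts, complex conjugation), the four optimal strategies are \emph{not} one orbit, and adding the party swap \eqref{eq:sym1} and index negation \eqref{eq:sym2} does not fix this. In each optimal irreducible strategy the commutators $\hat X_3\hat X_2\hat X_3^{-1}\hat X_2^{-1}$ and $\hat Y_3\hat Y_2\hat Y_3^{-1}\hat Y_2^{-1}$ are scalars $\kappa_A,\kappa_B\in\{\omega,\omega^2\}$. For the strategy displayed in the proof of Theorem~\ref{thm:unique}, $\kappa_A=\omega$ and $\kappa_B=\omega^2$, so $\kappa_A\kappa_B=1$. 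The symmetries above all preserve this property:
\begin{itemize}
\item translations and outcome shifts leave $\kappa_A,\kappa_B$ unchanged;
\item the swap exchanges them;
\item index negation and complex conjugation invert both.
\end{itemize}
Only the maps \eqref{eq:sym4} break it. These invert all observables (i.e.\ negate all outcomes) together with negating one party's inputs, and they invert exactly one of $\kappa_A,\kappa_B$. They link this pair of strategies to the other two optimal ones, which have $\kappa_A=\kappa_B$. With your symmetry set you would therefore find two inequivalent classes rather than one; the uniqueness claim needs \eqref{eq:sym4} explicitly.

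Smaller points:
\begin{itemize}
\item For the rule $a+b\equiv xy$ the Fourier expansion is $\frac13+\frac1{27}\sum_{x,y}\sum_{k=1,2}\omega^{-kxy}\psi^*(A_x^k\otimes B_y^{k})\psi$. Your factor $\frac19$ triples the quantum excess.
\item $\Q(\cos(\pi/18),\sqrt3)$ has degree $6$ over $\Q$. That is workable, but the paper instead rescales the imaginary block by $\sqrt{3/4}$ so the SDP becomes rational, and rounds over the cubic field $\Q(z)$ with $z^3-3z+1=0$ and $\lambda=(1+z)^2/9$.
\item The paper's border vector is not $1{+}AB$. It is level $2$ of the hierarchy obtained from SOS conditional expectations, i.e.\ words of length two in the support elements $X_iY_j$.
\item At that level the moment matrix is not flat, so your flatness fallback would not be available there.
\end{itemize}
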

See the \ifx\compareversion\currentversion Preliminaries section in the Results \else Section~\ref{sec:rep_theory} \fi for a formal definition of irreducibility. 
%
We further use the positivity certificate underlying Theorem \ref{thm:a} to show that CHSH mod 3 yields, in a suitable sense, a robust self-test for the maximally entangled state of two qutrits.
More precisely, the symmetries of the defining polynomial give rise to multiple optimal strategies with non-equivalent measurements, but all of them use a maximally entangled state.
%
\begin{thmA}[Theorem \ref{thm:robust}]\label{thm:b}
The CHSH mod 3 Bell inequality robustly self-tests the maximally entangled state of qutrits. Specifically, if a strategy achieves a value within $\varepsilon$ of the maximal quantum value $\frac{1}{3} + \frac{2\cos(\pi/18)}{3\sqrt{3}}$, 
then, up to a local isometry, it is
$O(\sqrt{\varepsilon})$-close in norm to a direct sum of 
optimal, irreducible strategies.
In each optimal irreducible strategy, the underlying state is a maximally entangled pair of qutrits.
\end{thmA}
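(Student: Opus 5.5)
The plan is to derive robustness directly from the exact sum-of-squares certificate behind Theorem~\ref{thm:a}, using a Gowers--Hatami style argument in the spirit of \cite{manvcinska2024constant}.

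\textbf{Step 1: certificate to approximate relations.} Write the certificate as $\lambda - p = v^* T \hat Z T^{\sf T} v$ modulo the ideal of universal relations, with $\hat Z \succ 0$ and $\lambda = \frac13 + \frac{2\cos(\pi/18)}{3\sqrt3}$. If a strategy $(A,B,\psi)$ has value at least $\lambda-\varepsilon$, then
\[
\sum_i \bigl\|(T^{\sf T} v)_i(A,B)\psi\bigr\|^2 \le \lambda_{\min}(\hat Z)^{-1}\varepsilon .
\]
So every polynomial $r_i = (T^{\sf T}v)_i$ satisfies $\|r_i(A,B)\psi\| = O(\sqrt\varepsilon)$.

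\textbf{Step 2: approximate relations in the operators.} The exact self-testing proof (Theorem~\ref{thm:unique}) uses the equations $r_i\psi=0$, via Gröbner-basis manipulations, to derive state-dependent relations. These express each Bob operator, in its unitary generalized-observable form $B_y=\sum_b \omega^b B_{b|y}$, as $B_y\psi = q_y(A)\psi$ for explicit noncommutative polynomials $q_y$ in Alice's operators. They also give relations purely among Alice's operators, $s_k(A)\psi=0$. These relations present the finite group, or the finite-dimensional algebra, whose irreducible representations are exactly the optimal irreducible strategies.

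Each derivation step multiplies a relation by a bounded operator, or moves an operator to the other side using $(X\otimes I)\psi$ versus $(I\otimes X)\psi$ commutation. Such steps increase the error only by a constant factor, so every derived relation holds up to $O(\sqrt\varepsilon)$ on $\psi$. Since all measurement operators are unitaries or projections of norm at most one, the constants depend only on the fixed certificate.

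\textbf{Step 3: local isometry via Gowers--Hatami.} Regard Alice's operators as an approximate representation, in the state-dependent norm $\|X\|_\psi = \|X\psi\|$, of the finite group $G$ generated by the clock/shift-type elements singled out in the exact analysis. Using $\psi$'s reduced density on Alice's side, apply the state-dependent Gowers--Hatami theorem (as reformulated algebraically in \cite{manvcinska2024constant}). This gives an isometry $V_A$ and an exact representation $\rho$ of $G$ such that $\|(V_A A_x - \rho(g_x) V_A)\psi\| = O(\sqrt\varepsilon)$. Decompose $\rho$ into irreducibles. Components that are not among the four optimal irreducible strategies must carry only $O(\varepsilon)$ weight of $\psi$, since for them the relations $s_k$ fail by a fixed constant. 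Those components are then absorbed into the error. Do the same for Bob, and transfer to the state using $B_y\psi\approx q_y(A)\psi$.

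Finally, show that $(V_A\otimes V_B)\psi$ is close to $\bigoplus_j \sqrt{p_j}\,\phi_3\otimes \mathrm{aux}_j$. Here $\phi_3$ is the maximally entangled two-qutrit state. Closeness follows because, within each irreducible block, the relations force the state into the one-dimensional joint eigenspace; this is the robust version of the uniqueness statement.

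\textbf{Main obstacle.} The hardest step is Step 2 combined with the Gowers--Hatami setup. One must identify a genuine finite group, or a suitable $*$-algebra version, in which the optimal measurements sit, since the optimal CHSH mod 3 operators involve $\cos(\pi/18)$ phases and are not obviously group elements. One must also make sure the Gröbner derivation can be carried out with linear error propagation. I would first try to rewrite the optimal operators, after a fixed phase conjugation, as generalized Pauli clock and shift matrices times diagonal phases, so that the Heisenberg–Weyl group mod phases works as $G$. If that fails, I would use the algebraic (non-group) robust framework of \cite{manvcinska2024constant} directly with the relation set $\{r_i\}$.
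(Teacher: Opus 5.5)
Your plan is essentially the paper's proof. The exact certificate makes every generator of $\mathcal J$ of size $O(\sqrt{\varepsilon})$ on $\psi$; Gr\"obner reduction turns this into $(\varepsilon',\psi)$-representations $f_A,f_B$ of the order-$81$ group $G$ in \eqref{eq:group}; Gowers--Hatami supplies the local isometry; and gap arguments on the blocks $(\pi,\sigma)$ of $\tau_A\otimes\tau_B$ discard non-optimal blocks and pin down the state in the four optimal ones (the paper uses $\beta_q-\beta'$ for the former and $\lambda_1-\lambda_2$ for the latter).

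The obstacle you anticipate does not arise: the optimal operators are exactly the Heisenberg--Weyl matrices $Z^2X^2, X, Z$, and the irrational coefficients (in $z$, with $z^3-3z+1=0$) appear only in the state. Also, mismatched pairs $(\pi_i,\sigma_j)$ with $i\neq j$ must be excluded by the joint gap of $p_3$ on $\pi\otimes\sigma$, not by Alice-only relations.
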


This paper is organized as follows. After some preliminaries, we recall the definition of the CHSH mod $d$ Bell inequality  \cite{buhrman_causality_2005}. 
We then specialize to the case $d=3$ and state the exact upper bound on the maximal quantum value $\beta_q$. 
After that, we consider two methods to extract optimal strategies from certificates, and show a new connection between the two methods. We also apply one of these methods to CHSH mod $3$, to determine all optimal strategies. 
We finish the Results section by establishing robust self-testing for CHSH mod $3$.
In the Methods section, we derive, using several reduction techniques, a tractable semidefinite program that yields an upper bound on the maximal value of the CHSH mod 3 Bell function.
We also apply a rounding scheme to obtain an exact rational solution for the reduced program. 
\ifx\compareversion\currentversion
\section*{Results}
\else 
\section{Results}
\fi

\ifx\compareversion\currentversion
\subsection*{Preliminaries}
\subsubsection*{Polynomial optimization}
\else 
\subsection{Preliminaries}
\subsubsection{Polynomial optimization}
\fi

\label{sec:prelim}
Let $X = (X_1, \dots, X_d)$ be a tuple of non-commuting variables. 
We denote by $\langle X \rangle$ the sets of words in $X$. A noncommuting polynomial $p \in \C\langle X \rangle$ is of the form
\[
p = \sum_{u \in \langle X \rangle} c_u u
\]
with finitely many nonzero coefficients $c_u$. The support of $p$, denoted by $\supp(p)$, is the set of words with nonzero coefficients. A word $u = \prod_{i=1}^n X_{j_i}$
is of degree $n$, and the degree of $p$ is the maximum degree of a word in the support of $p$. We denote by $\C\langle X\rangle_n$ the set of noncommutative polynomials of degree at most $n$. 

The algebra $\C\langle X \rangle$ is equipped with an involution $*$, which acts as complex conjugate on the coefficients and reverses words (i.e., $(\prod_{i=1}^n X_{j_i})^* = \prod_{i=1}^n X_{j_{n-i+1}}^*$). In this paper, we typically have $X_i^* = X_i^{-1}$. 

A two-sided ideal $\mathcal I$ of an algebra $\mathcal A$ generated by the elements $s_1, \dots, s_k \in \mathcal A$ is the set
\[
\langle s_i : i=1, \dots, k\rangle = \left\{\sum_{i,j} a_{ij} s_i b_{ij}  : a_{ij}, b_{ij} \in \mathcal A \right\},
\]
where the sum is finite. In this paper, the noncommutative variables are often partitioned into two tuples $X$ and $Y$, and part of the generators of the ideals we will use are then given by $X_iY_j - Y_jX_i$, so that the variables $X_i$ and $Y_j$ commute for all $i$ and $j$. 

A matrix $A \in \C^{N \times N}$ is positive semidefinite (resp.~positive definite), denoted by $A \succeq 0$ (resp.~$A \succ 0$) if it is Hermitian and all eigenvalues are nonnegative (resp.~positive). 
A Hermitian matrix has a spectral decomposition
\[
A = \sum_{i=1}^N \lambda_i \xi_i \xi_i^*,
\]
where $\xi^*$ is the conjugate transpose of $\xi\in \C^N$, and the square root of a positive semidefinite matrix is then given by
\[
\sqrt{A} = \sum_{i=1}^N \sqrt{\lambda_i} \xi_i \xi_i^*.
\]

Let $p \in \C\langle X, Y\rangle$ be a non-commutative polynomial in variables $X = (X_1, \dots, X_k)$ and $Y = (Y_1, \dots, Y_l)$, and consider (projection-valued) measurements $\{A_i\}_{i=1}^k$ and $\{B_j\}_{j=1}^l$ on separable Hilbert spaces $\mathcal H_A$ and $\mathcal H_B$ respectively, and a state $\psi \in \mathcal H_A \otimes \mathcal H_B$. The inequality 
\[ 
\beta(A, B, \psi) = \psi^* p(A\otimes I_B, I_A \otimes B) \psi \leq \beta_c,
\]
 where $\beta_c$ is the maximum value of $\beta(A, B, \psi)$ that can be obtained through a classical strategy (that is, $\psi = \psi_A \otimes \psi_B$ with $\psi_A \in \mathcal H_A$ and $\psi_B \in \mathcal H_B$), is called a Bell inequality. We denote the maximum value that can be obtained in quantum mechanics by $\beta_q$, and we call $(\{A_i\}_i, \{B_j\}_j, \psi)$ a strategy for the polynomial $p$, or simply a strategy when the polynomial is clear from the context. In general, we will consider commuting measurements $\{A_i\}$ and $\{B_j\}$ on the same Hilbert space $\mathcal H$.

Now, let $\mathcal{I}$ be the ideal of universal relations satisfied by all feasible measurement operators $A, B$. 
Suppose $g_1, \dots, g_N\in \C\langle X, Y\rangle$ are such that $p = \lambda -  \sum_j g_j^*g_j + q$ for some $\lambda \in \R$ and $q \in \mathcal I$, then
\begin{equation}\label{eq:poly_upperbound}
 \psi^* p(A, B) \psi = \lambda - \sum_j (g_j(A, B)\psi)^* g_j(A, B)\psi \leq \lambda
\end{equation}
for all strategies $(A, B, \psi)$. Thus $\lambda$ is an upper bound on $\beta_q$. This is the basis of non-commutative polynomial optimization. See \cite{burgdorf_optimization_2016} for a thorough introduction. 
Such $\lambda$, $q$ and $g_j$ can be found using semidefinite programming \cite{vandenberghe1996semidefinite}. 
Indeed, any sum-of-squares polynomial can be written as $v^*Zv$, where $Z$ is Hermitian positive semidefinite ($Z \succeq 0$), and $v$ is a so-called \emph{border vector} of which the entries form a basis of the non-commutative polynomials of degree at most the maximum degree of $g_j$.
The explicit semidefinite program can then be written as 
\begin{equation}\label{eq:general_sdp}
    \begin{aligned}
        & \text{inf} && \lambda \\
        & \text{s.t.} &&  \lambda - p
        = v^* Z v \mod \mathcal I,\\
        &&& Z \succeq 0.
    \end{aligned}    
\end{equation}
Solving such a semidefinite program gives a numerical solution, and one can generally find a rational sum-of-squares polynomial with a slightly worse $\lambda$ by relying on a so-called \emph{rounding and projection} algorithm. 
The initial rounding and projection algorithm has been applied for unconstrained polynomial optimization in \cite{parrilo}. 
Noncommutative extensions have been provided in \cite{cafuta2015rational,naceur2025certified}. 

By fixing the entries of the border vector $v$, this gives a finite semidefinite program. The idea of the NPA hierarchy is to increase the maximum degree step by step to get better bounds: the $n$-th level of the hierarchy sets $v = v_n$ to be the vector whose entries form a basis of the space of polynomials of degree at most $n$, and thus takes into account sum-of-squares polynomials of degree at most $2n$. 
Let $\mathcal H_A$ and $\mathcal H_B$ be Hilbert spaces. The partial trace $\Tr_A : \mathcal H_A \otimes \mathcal H_B \to \mathcal H_B$ is the unique linear map such that $\Tr_A(X \otimes Y) = \Tr(X)Y$ for all linear operators $X:\mathcal H_A \to \mathcal H_A$ and $Y:\mathcal H_B\to \mathcal H_B$.

\ifx\compareversion\currentversion
\subsubsection*{CHSH mod $d$}\label{sec:CHSH}
\else
\subsubsection{CHSH mod $d$}\label{sec:CHSH}
\fi
In this paper, we focus mainly on the CHSH mod $d$ Bell inequality originally introduced by Buhrman and Massar \cite{buhrman_causality_2005}. Fix a prime $d$ and define for all $i,j,k,l \in \{1,\dots,d\}$
\[
c_{i,j,k,l} = \frac{1}{d^2}\delta(i+j-kl \mod d),
\]
where $\delta(a) = 1$ if $a = 0$ and $0$ otherwise. Then the polynomial defining the Bell inequality is given by
\[
 p_d = \sum_{i,j,k,l=1}^d c_{i,j,k,l} A_i^k \otimes B_j^l.
\]
We wish to find Hilbert spaces $\mathcal H_A$ and $\mathcal H_B$, a state $\psi \in \mathcal H_A \otimes \mathcal H_B$ and projection-valued measurements 
$\{A_i^k:\mathcal H_A \to \mathcal H_A \mid k=1, \dots, d\}$ and $\{B_j^l:\mathcal H_B \to \mathcal H_B \mid l=1, \dots, d\}$ such that $\beta(A, B, \psi) = \psi^* \hat p_d(A, B) \psi$ is maximal. We denote this maximal $\beta(A,B,\psi)$ by $\beta_q$. Projection-valued measurements satisfy the conditions
\[
A_i^k A_j^k = \delta_{ij} A_i^k,\ \sum_{i} A_i^k = I, \ (A_i^k)^* = A_i^k,
\]
and likewise for the operators $B_j^l$.
The quantity $\beta(A, B, \psi)$ can be interpreted as the winning probability for a nonlocal game, where the players win if their answers $i, j \in \{1, \dots, d\}$ sum to the product of the questions $k, l \in \{1, \dots, d\}$ modulo $d$, and their strategy is measuring $\psi$ using the projection-valued measurements $A$ and $B$. The case $d = 2$ is the classical CHSH inequality, and in this paper we solve the case $d = 3$.

To formulate $p_d$ as a non-commutative polynomial, we use $A_i^k \otimes I$ and $I \otimes B_j^l$ as variables instead of $A_i^k$ and $B_j^l$, which effectively removes the tensor product and gives commutation relations $[A_i^k, B_j^l] = 0$.

Using the transformation
\[
X_k = \sum_{i=1}^d \omega^{-i} A_i^k, \quad Y_l = \sum_{j=1}^d \omega^{-j} B_j^l,
\]
where $\omega$ is a $d$-th root of unity, we can write the polynomial in terms of observables $X_j, Y_k$. From $\delta(x \mod d) = \frac{1}{d} \sum_{n=1}^d \omega^{nx}$ we obtain
\begin{align*}
    p_d &= \frac{1}{d^3}\sum_{i,j,k,l,n=1}^d \omega^{n(-i-j+kl)} A_i^kB_j^l \\
    &= \frac{1}{d^3}\sum_{k,l,n=1}^d \omega^{kln}(\sum_{i=1}^d\omega^{-i}A_i^k)^n(\sum_{j=1}\omega^{-j}B_j^l)^n \\
    &
    = \frac{1}{d^3}\sum_{k,l,n=1}^d \omega^{kln} X_k^nY_l^n,
\end{align*}
where in the second equality we used that $A_i^k$ and $B_j^l$ are projections.
Since $A_i^k$ and $B_j^l$ form projection-valued measurements, 
$X_k$ and $Y_l$ are $d$-th roots of the identity operator, and $X_k^* = X_k^{-1}$, $Y_l^*=Y_l^{-1}$. Since $A$ and $B$ commute, so do $X$ and $Y$. The variables $X_j$ and $Y_k$ generate a group.

We denote by $\mathcal I$ the ideal generated by the relations the variables $X$ and $Y$ satisfy, i.e., 
\begin{equation}
    \label{eq:ideal}
\mathcal I = \langle X_j Y_k-Y_kX_j,\, X_j^d - I,\, Y_j^d-I \mid j, k \in \{1, \dots, d\} \rangle.
\end{equation}
For reference, the non-commutative polynomial optimization problem we consider in the remainder of this paper is 
\begin{equation}\label{eq:ncpop}
\begin{aligned}
    \beta_q &= && \text{sup} && \psi^* p_d(X, Y) \psi \\
    &&&\text{subject to} && \mathcal H  && \text{Hilbert space},\\
    &&&&& X_i, Y_i: \mathcal H \to \mathcal H, && \text{with } q(X, Y) = 0 \quad  \forall q \in \mathcal I \\
    &&&&& \psi \in \mathcal H.
\end{aligned}
\end{equation}
Typically, we take $d = 3$, which will be clear from the context.

\ifx\compareversion\currentversion
\subsubsection*{Representation theory}
\else
\subsubsection{Representation theory}
\label{sec:rep_theory}
\fi
We denote the identity element of a group by $e$. The direct product of two groups $G_1, G_2$ is given by the group $G_1 \times G_2 = \{ (\zeta_1, \zeta_2) : \zeta_1 \in G_1, \zeta_2 \in G_2\}$ with the product $(\zeta_1, \zeta_2) \cdot (\zeta_3, \zeta_4) = (\zeta_1\zeta_3, \zeta_2\zeta_4)$. Given a homomorphism $\phi:G_2 \to \mathrm{Aut}(G_1)$, the semidirect product $G_1 \rtimes G_2$ uses the same set of elements, with the product $(\zeta_1, \zeta_2) \cdot (\zeta_3, \zeta_4) = (\zeta_1 \phi(\zeta_2)(\zeta_3), \zeta_2 \zeta_4)$. That is, instead of commuting variables $(\zeta_1, e)$ and $(e, \zeta_2)$, the variables satisfy the relation $(e, \zeta_2)\cdot (\zeta_1, e) = (\phi(\zeta_2)(\zeta_1), e)\cdot (e, \zeta_2)$. The group $G_1 \simeq G_1 \times \{e\}$ is a normal subgroup of $G_1 \rtimes G_2$: for every $\zeta_1 \in G_1, \zeta_2 \in G_2$ we have $(e, \zeta_2) \cdot (\zeta_1, e) \cdot (e, \zeta_2^{-1}) = (\phi(\zeta_2)(\zeta_1), e) \in G_1 \times \{e\}$. 

A representation $\pi$ of a group $G$ on a vector space $V_\pi$ is a group homomorphism $\pi:G \to \GL{V_\pi}$. We refer to both $\pi$ and the associated vector space $V_\pi$ as a representation.  
The dimension $d_\pi$ of the representation $\pi$ is the dimension of $V_\pi$. 
A representation is irreducible if the only subspaces $W \subseteq V_\pi$ such that $\pi(\gamma)W \subseteq W$ for every $\gamma \in G$ are $V_\pi$ and $\{0\}$. Two representations $(\pi, V_\pi), (\pi', V_{\pi'})$ are equivalent if there is an invertible map $T:V_\pi \to V_{\pi'}$ with $T\pi(\gamma) = \pi'(\gamma)T$ for all $\gamma \in G$ (i.e., $T$ is equivariant). For more background on representation theory, see for example \cite{serre_linear_1996, fulton_representation_1991}.

\ifx\compareversion\currentversion
\subsection*{Symmetries of CHSH mod $d$}\label{sec:p_symmetries}
\else
\subsection{Symmetries of CHSH mod $d$}\label{sec:p_symmetries}
\fi
The polynomial $p_d$ admits many symmetries. Such symmetries can be exploited to drastically reduce the size of the semidefinite programs used to compute bounds. The symmetries the polynomial $p_d$ has are generated by the following actions:
\begin{itemize}
    \item Interchanging $X_i$ with $Y_i$ for all $i$ simultaneously:
    \begin{equation}\label{eq:sym1}
        (X, Y) \mapsto (Y_1, \dots, Y_d, X_1, \dots, X_d)
    \end{equation}
    \item Negating all indices modulo $d$: 
    \begin{equation}\label{eq:sym2}
        (X, Y) \mapsto (X_{d-1}, \dots, X_1, X_d, Y_{d-1}, \dots, Y_1, Y_d)
    \end{equation}
    \item Increasing the index of either $X$ or $Y$ and multiplying the other by a power of $\omega$ depending on the index: 
    \begin{equation}\label{eq:sym3}
    \begin{aligned}
        (X, Y) &\mapsto (X_2, \dots, X_d, X_1, \omega^1 Y_1, \dots, \omega^d Y_d),  \\
        (X, Y) &\mapsto (\omega^1 X_1, \dots, \omega^d X_d, Y_2, \dots, Y_d, Y_1) \\
    \end{aligned}
    \end{equation}
    \item Inverting the matrices, and negating the indices of either the $X$ matrices or the $Y$ matrices modulo $d$: 
    \begin{equation}\label{eq:sym4}
        \begin{aligned}
            (X, Y) &\mapsto (X_1^{d-1}, \dots, X_d^{d-1}, Y_{d-1}^{d-1}, \dots, Y_{1}^{d-1}, Y_d^{d-1}), \\
            (X, Y) &\mapsto (X_{d-1}^{d-1}, \dots, X_1^{d-1}, X_d^{d-1}, Y_1^{d-1}, \dots, Y_d^{d-1})
        \end{aligned}
    \end{equation}
\end{itemize}
Except for the last symmetry, these maps do not influence the total degree of a word in the variables $X$ and $Y$. The group generated by \eqref{eq:sym1}-\eqref{eq:sym3} is $\Gamma = (C_d \times C_d) \rtimes (C_2 \times C_2)$, where $C_j$ is the cyclic group with $j$ elements.

\ifx\compareversion\currentversion
\subsection*{Upper bound on $\beta_q$ for CHSH mod $3$}
\else
\subsection{Upper bound on $\beta_q$ for CHSH mod $3$}
\fi
For our choice of the border vector $v$ and the formulation of our final semidefinite program, see
\ifx\compareversion\currentversion
the Methods section.
\else
Section~\ref{sec:methods}.
\fi
This results in some vectors $v_{\pi,j}$ whose entries are noncommutative polynomials such that the constraint of the semidefinite program reads
\begin{equation}\label{eq:final_constraint}
\lambda - p_3 = \sum_{\pi \in \hat \Gamma} \sum_{j=1}^{d_\pi} v_{\pi,j}^* \begin{pmatrix}
    I & \sqrt{3/4}\mathrm{i}I
\end{pmatrix}Z^{\pi}\begin{pmatrix}
    I \\ -\sqrt{3/4}\mathrm{i}I
\end{pmatrix}v_{\pi, j} \mod \mathcal I,
\end{equation}
where the matrices $Z^{\pi}$ are real positive semidefinite matrix variables, $\hat \Gamma$ are the irreducible representations of the group $\Gamma$, and $\mathrm{i}$ is the imaginary unit.
\begin{theorem}
\label{thm:chsh_exact_violation}
The maximal value of the CHSH mod $3$ Bell function is at most $\frac{1}{3} + \frac{2\cos(\pi/18)}{3\sqrt{3}}$.
\end{theorem}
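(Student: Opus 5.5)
The plan is to exhibit an exact feasible solution of the symmetry-reduced semidefinite program whose constraint is \eqref{eq:final_constraint}, with $\lambda = \lambda^\star := \frac{1}{3} + \frac{2\cos(\pi/18)}{3\sqrt{3}}$. Once such real matrices $Z^\pi \succeq 0$ are found, the bound follows as in \eqref{eq:poly_upperbound}. Write $M = \begin{pmatrix} I & \sqrt{3/4}\,\mathrm{i} I\end{pmatrix}$. Then each summand in \eqref{eq:final_constraint} has the form $(M^{*}v_{\pi,j})^*Z^\pi (M^{*}v_{\pi,j})$. Factoring $Z^\pi = \sum_k z_k z_k^{\sf T}$ shows that $\lambda^\star - p_3$ is a sum of Hermitian squares modulo $\mathcal I$. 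Evaluating at any strategy $(X,Y,\psi)$ that satisfies the relations of $\mathcal I$ therefore gives $\psi^*p_3(X,Y)\psi \le \lambda^\star$.

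The steps, in order, are as follows.

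First, I would fix the border vector and carry out the symmetry reduction by $\Gamma$ described in the Methods section. This yields the vectors $v_{\pi,j}$ and a block-diagonal program with one real block per $\pi\in\hat\Gamma$. The real-and-imaginary splitting encoded by $M$ is what makes real variables $Z^\pi$ suffice.

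Second, I would solve this program numerically in high precision, so that the optimum agrees with $\lambda^\star$ to many digits.

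Third, I would apply the rounding procedure of \cite{cohn_optimality_2024}. This returns $Z^\pi = T_\pi \hat Z^\pi T_\pi^{\sf T}$ with $\hat Z^\pi \succ 0$, where the entries lie in a number field containing $\cos(\pi/18)$ and $\sqrt3$. That field is $\mathbb Q(\cos(\pi/18),\sqrt 3)$, since $\cos(\pi/18)$ is a root of $8x^3-6x-\sqrt3/2\cdot$(suitably normalized) cubic over $\mathbb Q(\sqrt3)$; concretely, $4c^3-3c=\cos(\pi/6)=\sqrt3/2$.

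Fourth, I would verify the certificate in exact arithmetic. Concretely, I would check the linear identity \eqref{eq:final_constraint} coefficient by coefficient, after reducing every word modulo $\mathcal I$ to a normal form using commutation of $X$ and $Y$ and $X_j^3=Y_j^3=I$. I would also check that $\hat Z^\pi \succ 0$, for instance via an exact Cholesky/LDL$^{\sf T}$ decomposition or by checking that all leading principal minors are positive in the number field. Positive minors can be certified by evaluating them under every real embedding of the field, or by rational interval bounds.

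The main obstacle is the third step. Rounding to an exact solution only works if an optimal solution with entries in a low-degree number field exists and the numerical solution lies close to the corresponding optimal face. The numerical solution therefore has to be accurate enough to detect the kernel, which is exactly what $T_\pi$ encodes. The reduced program must also be small enough for high-precision interior point methods.

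My first attempt would be to detect the kernel vectors as exact vectors over $\mathbb Q(\sqrt3, \cos(\pi/18))$ by LLL-based recognition. This is plausible because, by complementary slackness, the kernel vectors should be spanned by moment data of the optimal strategy of \cite{ji_multi-setting_2008}, which lives in that field. I would then solve the linear system for the remaining free parameters and pick a rational interior point of the resulting affine slice. If the degree turns out to be too high, the fallback is to enlarge or adjust the border vector until a feasible face of small degree appears.
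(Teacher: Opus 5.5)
Your route is the paper's: solve the symmetry-reduced program with constraint \eqref{eq:final_constraint} in high precision, round it with the procedure of \cite{cohn_optimality_2024} to $Z^\pi=T_\pi\hat Z^\pi T_\pi^{\sf T}$, check the affine identities exactly and $\hat Z^\pi\succ 0$ rigorously, and conclude via the sum-of-Hermitian-squares argument, which you state correctly.

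The field you name is larger than needed. The paper's certificate lives in the cubic field $F=\mathbb{Q}(z)$ with $z^3-3z+1=0$ and $z\approx 1.5320889$, so $z=2\cos(2\pi/9)$. Here $\lambda^\star=(1+z)^2/9\in F$, even though neither $\cos(\pi/18)$ nor $\sqrt3$ lies in $F$; indeed $2\cos(\pi/18)/\sqrt3=(z^2+2z-2)/3$. Your $\mathbb{Q}(\cos(\pi/18),\sqrt3)$ is the degree-$6$ compositum $F(\sqrt3)$, which the paper deliberately avoids. The factor $\sqrt{3/4}$ in $M$ is not what makes real $Z^\pi$ suffice; that is the $\begin{pmatrix} I & \mathrm{i}I\end{pmatrix}$ complex-to-real step. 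Instead, $\sqrt{3/4}$ is a rescaling that makes all data of the reduced program rational, so the only irrational quantity left is $\lambda^\star$. Rounding over $F(\sqrt3)$ is not wrong in principle, since it contains $F$, but it doubles the degree the LLL step must recognize. Likewise, the affine slice at $\lambda=\lambda^\star$ is defined over $F$, so the point you pick has entries in $F$, not in $\mathbb{Q}$.

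The step that would actually fail is certifying positivity ``under every real embedding''. The rescaled constraints have rational coefficients. So applying entrywise any embedding $\sigma$ with $\sigma(\lambda^\star)\neq\lambda^\star$ gives a solution of the same affine constraints with $\lambda=\sigma(\lambda^\star)$, which is $\approx 0.202$ or $\approx 0.086$. If all $\sigma(\hat Z^\pi)$ were positive semidefinite, this would certify an upper bound below $5/9$. But $5/9$ is the value of the trivial strategy $X_k=Y_l=I$. Hence some $\sigma(\hat Z^\pi)$ necessarily has a negative eigenvalue, and that test can never succeed. Positivity must be checked only in the embedding $z\approx 1.532$: for example by interval arithmetic, as the paper does, or by exact $LDL^{\sf T}$ pivots evaluated in that embedding.
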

\begin{proof}
Solving the semidefinite program \eqref{eq:general_sdp} where the constraint is specialized to \eqref{eq:final_constraint},
and rounding the solution using the rounding procedure of \cite{cohn_optimality_2024} gives a solution over the number field $F$ with generator $z \approx 1.5320889$ satisfying $1 - 3z + z^3 = 0$. The matrices in the exact solution returned by the rounding procedure are of the form
\[
    Z^\pi = T_\pi \hat Z^\pi T_\pi^{\sf T}
\]
with $\hat Z^\pi \succ 0$ (the matrices $T_\pi$ are in general not square), where the entries of $\hat Z^\pi$ and $T_\pi$ are elements in $F$. 
The exact solution is feasible with objective function value
\[
\lambda = \frac{1}{9}(1 + 2z + z^2) = \frac{1}{3} + \frac{2\cos(\pi/18)}{3\sqrt{3}}, 
\]
which shows that this is an upper bound on the maximal value of CHSH mod $3$.

To verify that the solution is indeed feasible, we check that the affine constraints \eqref{eq:SDP_constraints_final} hold, and that the matrices $\hat Z^{\pi}$ are positive definite in interval arithmetic. The solution and the code to verify the feasibility of the solution are available at \cite{code_chshmod3}. 
\end{proof}
Note that this proves that the construction of Ji et al. in \cite{ji_multi-setting_2008} is optimal.

\ifx\compareversion\currentversion
\subsection*{Optimizer extraction}\label{sec:extraction}
\else
\subsection{Optimizer extraction}\label{sec:extraction}
\fi
We consider two  methods to extract optimizers from a sharp semidefinite programming bound. First we use the exact sum-of-squares certificate to find an ideal $\mathcal J$ such that $q(X, Y, \psi) = 0$ for any $q \in \mathcal J$ and any strategy $(X, Y, \psi)$ maximizing $\beta(X, Y, \psi)$. If the group generated by any optimal operators $X, Y$ is finite, all possible optimizers can be extracted, up to unitary transformations. The extraction method is based on \cite[Section 6.3]{watts_noncommutative_2023}. 

After that we consider a well-known technique that requires flatness of the dual certificate, the moment matrix. See for example \cite{burgdorf_optimization_2016}. The two methods are closely related to each other. We show that if the moment matrix is flat, then under mild conditions the two extraction methods lead to the same optimizers.
For the second level of the hierarchy introduced in \ifx\compareversion\currentversion the Methods section \else Section~\ref{sec:methods} \fi for CHSH mod $3$, this method cannot be used because the resulting moment matrix is not flat.

In the following two sections, we consider a slightly more general polynomial optimization problem than a Bell scenario with two parties. We take a polynomial $p \in \C\langle X \rangle$, and consider an ideal $\mathcal I$ such that the variables $X_i$ generate a group modulo the ideal. Furthermore, we assume that $X_i$ is unitary for all $i$, i.e., the involution is defined by $X_i^* = X_i^{-1}$.

Recall that the constraint is of the form $\lambda - p = v^{*} Z v \mod \mathcal I$, with $Z$ Hermitian positive semidefinite and $v$ a basis of a vector space of polynomials, such that $p+q \in \Span\{a^*b : a, b \in v\}$ for some $q \in \mathcal I$. If $v$ is a vector of words, the dual semidefinite program to \eqref{eq:general_sdp} has a simple form and can be written as
\begin{equation}\label{eq:moment_sdp}
\begin{aligned}
& \max && \langle G_p, M \rangle, \\
& \text{subject to} && M_{1,1} = 1, \\
&&& M_{a, b} = M_{x,y} & \text{if }a^*b = x^*y \mod \mathcal I \\
&&& M \succeq 0,
\end{aligned}
\end{equation}
where $G_p$ is a matrix such that $p = v^*G_pv \mod \mathcal I$. The matrix $M$ is referred to as the moment matrix and is indexed by $a, b \in v$. 

\ifx\compareversion\currentversion
\subsubsection*{Extraction through SOS certificates}\label{sec:grobner_optimizer}
\else
\subsubsection{Extraction through SOS certificates}\label{sec:grobner_optimizer}
\fi
Let $(X, \psi)$ be a strategy that maximizes $\psi^* p(X)\psi$, and suppose $(\beta_q, Z)$ is an optimal SOS certificate. Then in particular
\[
0= \beta_q - \psi^* p(X)\psi  = \psi^* v^*(X) Z v(X)\psi.
\]
Now suppose $Z = T\hat Z T^*$, with $\hat Z \succ 0$. Then for any optimal strategy $(X, \psi)$, we have that
\[
0 = \psi^* v^*(X) Z v(X) \psi = \psi^* v^*(X) T\hat Z T^* v(X) \psi = \|\sqrt{\hat Z} T^*v(X)\psi\|^2
\]
where $\sqrt{\hat Z}$ is the square root of $\hat Z$. Since $\sqrt{\hat Z}$ is an invertible matrix, we have for every column $T_i$ of $T$ that
\[
T_{i}^* v(X) \psi = 0.
\]
That is, any optimal strategy $(X, \psi)$ satisfies $q(X, \psi) = 0$ for any $q$ in the two-sided ideal $\mathcal J \subseteq \C\langle X, \psi \rangle$ generated by  $\{T_i^*v(X)\psi\}_i$ and generators of $\mathcal I$. 

Now define $H_{\mathcal J} = \C\langle X\rangle \psi /\mathcal J$, and consider the map $\rho: \{X_i\}_i \to \mathcal{L}(H_{\mathcal J})$ defined by $\rho(X_i)u = X_iu$, and extend this to $\C\langle X \rangle/\mathcal I$. Here $\mathcal{L}(H_{\mathcal J})$ denotes the space of linear operators on $H_{\mathcal J}$. 
Then the matrices $\rho(X_i)$ satisfy $q(\rho(X))u=\rho(q(X))u = q(X)u = 0$ for all $q \in \mathcal I$ and $u \in H_{\mathcal J}$, so in particular the matrices $\rho(X_i)$ generate a group $G$. We assume $G$ to be finite; note that this in particular implies that $H_{\mathcal J}$ is finite dimensional. Then $\rho$ is a representation of $G$ when restricted to words. 

Take an inner product $\langle \cdot, \cdot \rangle$ on $H_{\mathcal J}$ such that $\rho$ is a unitary representation. For example, given any inner product $( \cdot, \cdot)$, take the inner product $\langle u, w\rangle = \frac{1}{|G|}\sum_{\zeta \in G}( \rho(\zeta)u, \rho(\zeta)w )$. Then $H_{\mathcal J}$ is a Hilbert space. 
Moreover, if we extend $\rho$ by linearity to $\C\langle X \rangle$, we have
\[
\langle \psi, \rho(p) \psi\rangle = \langle\psi, \rho(\beta_q I - v^*(X)Zv(X))\psi\rangle =\beta_q \langle \psi, \psi\rangle
\]
because $q(X,\psi) = 0$ for any $q \in \mathcal J$ and $v^*(X)Zv(X)\psi \in \mathcal J$. Thus, $(\rho(X), \psi/\|\psi\|)$ is an optimal strategy.

\begin{remark}
If $G$ is infinite but compact, we can still average the inner product over the group using its Haar measure. Therefore, this will still give a unitary representation and an optimal (but possibly infinite-dimensional) strategy. 
\end{remark}
\begin{remark}
    If the variables $X$ do not generate a group and $H_{\mathcal J}$ is finite, the same method can be used to find matrices $\rho(X)$ and a state $\psi$ that satisfy almost all requirements by choosing a basis of $H_{\mathcal J}$. Since the ideal does not enforce conditions on the adjoint of the variables (i.e., $X$ must be Hermitian, or unitary), such conditions are typically not directly satisfied by $\rho(X)$ in a chosen basis. In the next section, an inner product for which the adjoint conditions are satisfied comes from the moment matrix, i.e., the solution to the dual semidefinite program. On the sum-of-squares side, however, it is not directly clear how to define a suitable inner product.
\end{remark}

Using representation theory, we can block-diagonalize $\rho$. 
Suppose that $\{(\rho_{k}, V_{k})\}_k$ is a complete set of (unitary) irreducible representations of $G$. Then $\rho$ can be block-diagonalized as 
\[
P\rho P^{-1} = \bigoplus_{k} \bigoplus_{i=1}^{m_{k}} \rho_{k}^i,
\]
where the irreducible representations $\rho_{k}^i$ are equivalent to $\rho_{k}$ for each $i$, and $m_{k}$ is the multiplicity of $\rho_{k}$ in $\rho$. We denote the subspace of $H_{\mathcal J}$ on which $\rho_{k}^i$ acts by $H_{k}^i$.  Since both $\rho_{k}^i$ and $\rho$ are unitary, the basis transformation matrix $P$ is unitary. Furthermore, each subrepresentation $\rho_{k}^i$ of $\rho$ gives an optimal strategy $(\rho_{k}^i(X), \psi_{k,i}/\|\psi_{k,i}\|)$, where $P\psi = \bigoplus_{k, i}\psi_{k}^i$ is a decomposition with $\psi_{k}^i \in H_{k}^i$.

We call a strategy $(X, \psi)$ with $\psi \in H$ irreducible if there is no subspace $V$ of $H$ such that $X_iV \subseteq V$ for all $i$. In particular, direct sums of optimal strategies are reducible.
\begin{lemma}
    If $(X, \psi)$ is optimal and irreducible, then there is some state $\hat \psi$ such that $(X, \psi)$ is unitarily equivalent to $(\rho_{k},\hat\psi)$ for some $k$.
\end{lemma}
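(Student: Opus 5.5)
The plan is to realise $(X,\psi)$ inside $H_{\mathcal J}$, and then use irreducibility to identify the resulting subrepresentation with one of the $\rho_k$.

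\textbf{Step 1: a map into the strategy space.} Let $(X,\psi)$ act on $H$, and define
\[
\Phi:\C\langle X\rangle\psi\to H,\qquad q(X)\psi\mapsto q(X)\psi,
\]
where on the right $X$ denotes the given operators and $\psi$ the given vector. Because $(X,\psi)$ is optimal, the argument of the preceding section shows that every element of $\mathcal J$ vanishes when evaluated at $(X,\psi)$. This covers both the generators $T_i^*v(X)\psi$ and the elements of $\mathcal I$, and it extends to the whole two-sided ideal since left multiplication by $X$ preserves vanishing. Hence $\Phi$ descends to a well-defined linear map $\Phi:H_{\mathcal J}\to H$.

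By construction $\Phi$ is equivariant: $\Phi(\rho(X_i)u)=X_i\Phi(u)$. Its image is $\C\langle X\rangle\psi$, which is invariant under all $X_i$ and nonzero, since $\psi\neq 0$. Irreducibility of $(X,\psi)$ then forces the image to equal $H$. In particular $H$ is finite-dimensional, and the $X_i$ on $H$ satisfy the relations of $G$, because $\ker\Phi$ contains all relations. So $H$ is a representation of the finite group $G$, and an irreducible one.

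\textbf{Step 2: identify $H$ as a summand.} Since $\rho$ is a unitary, hence completely reducible, representation of $G$, $\ker\Phi$ is an invariant subspace and has an invariant orthogonal complement $W$. Then $\Phi|_W:W\to H$ is an equivariant isomorphism, so $W$ is irreducible and equivalent to some $\rho_k$.

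Unitarity is where I expect the main care to be needed. Both $X$ on $H$ and $\rho_k$ are unitary representations, so an equivalence between them can be upgraded to a unitary equivalence $U$ by polar decomposition: if $S$ intertwines them, then $S^*S$ commutes with the irreducible $\rho_k$, so by Schur's lemma $S^*S$ is scalar and $S$ is a multiple of a unitary. The state is then set as $\hat\psi=U^{-1}\psi$.

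\textbf{Step 3: optimality of the identified strategy.} Because $U$ is unitary and intertwines the operators, $(\rho_k,\hat\psi)$ gives the same value as $(X,\psi)$, so it is also optimal. The unit norm of $\psi$ is preserved, which completes the identification of $(X,\psi)$ with $(\rho_k,\hat\psi)$ up to unitary equivalence.

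One subtlety must be handled first. The word ``irreducible'' has to be read as ``no nontrivial proper invariant subspace''; the definition as literally stated omits ``nontrivial proper''. Everything else is routine representation theory of finite groups.
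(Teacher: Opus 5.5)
Your proof is correct. It follows the same skeleton as the paper's, but it fills in steps the paper leaves implicit and ends up proving somewhat more.

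The paper simply regards the operators $X_i$ of the strategy as a representation $\pi$ of $G$ on $H_\psi$. It observes that irreducibility of the strategy is irreducibility of $\pi$, and concludes that $\pi$ is equivalent, hence (both being unitary) unitarily equivalent, to some member of the complete list $\{\rho_k\}$ of irreducible representations of $G$, with $\hat\psi = T\psi$.

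It never justifies that the $X_i$ of an optimal strategy actually satisfy the defining relations of $G$, nor that $H_\psi$ is finite-dimensional. Your equivariant surjection $\Phi:H_{\mathcal J}\to H$ supplies both. Your use of irreducibility to get $\C\langle X\rangle\psi = H$ is exactly what is needed to pass from relations holding on the cyclic subspace to relations holding on all of $H$. Your Schur-lemma argument likewise spells out the paper's one-line ``both unitary, hence unitarily equivalent''.

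Your Step~2 does more than the lemma requires. Since $\{\rho_k\}$ is by definition a complete set of irreducibles of $G$, once $H$ is known to be an irreducible unitary representation of $G$ it is already equivalent to some $\rho_k$. The detour through the invariant complement $W$ of $\ker\Phi$ is therefore not needed.

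What that detour buys is that the type of $(X,\psi)$ genuinely occurs inside $\rho$ on $H_{\mathcal J}$, i.e.\ $m_k\ge 1$. Moreover, $\Phi$ kills every isotypic component other than the $k$-th, and $\Phi([\psi])=\psi\neq 0$ for the class $[\psi]$ of $\psi$ in $H_{\mathcal J}$. So the $k$-isotypic component of $[\psi]$ is nonzero. This is precisely the point the paper flags after the lemma as not yet established, and it proves it separately in Theorem~\ref{thm:all_opt_strats} via the projection $p^{\pi}_{11}$. Your argument obtains it directly.

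Your caveat that ``irreducible'' must be read as ``no nontrivial proper invariant subspace'' is also correct, since the definition as literally stated is vacuous.
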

\begin{proof}
Define the representation $\pi:G \to H_\psi$, where $H_\psi$ is the Hilbert space $\psi$ lives in, with $\pi(X) = X$. Note that a strategy is irreducible if and only if this representation is irreducible. Hence it is equivalent to $\rho_{k}$ for some $k$, and since both representations are unitary, they are unitarily equivalent. That is, there is some unitary bijection $T: H_\psi \to H_{k}$ such that 
\[
\rho_k = T \pi T^{-1}.
\]
Set $\hat \psi = T\psi$. This gives a strategy $(\rho_k, \hat \psi)$ unitarily equivalent to $(X, \psi)$.
\end{proof}
Note that this only says that all optimal irreducible strategies can be found among the irreducible representations of the group $G$. However, in principle the multiplicity $m_k$ could be $0$ for some optimal irreducible representation $\rho_k$. The next theorem shows that this is not the case.
\begin{theorem}\label{thm:all_opt_strats}
    The strategies $(\rho_k^i(X), \psi_{k,i})$ with $\psi_{k,i} \neq 0$ are all optimal irreducible strategies.
\end{theorem}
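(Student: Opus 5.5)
The theorem has two halves: each $(\rho_k^i(X), \psi_{k,i}/\|\psi_{k,i}\|)$ with $\psi_{k,i}\neq 0$ is optimal and irreducible, and conversely every optimal irreducible strategy arises this way up to unitary equivalence. The first half is already observed in the text above. Each $\rho_k^i$ is an irreducible subrepresentation. Since $\mathcal J$ is a two-sided ideal, the projection of $\psi$ onto $H_k^i$ still satisfies $T_j^*v(X)\psi_{k,i}=0$, so the computation $\langle \psi_{k,i}, \rho(p)\psi_{k,i}\rangle = \beta_q\|\psi_{k,i}\|^2$ goes through. The real content is the converse, which also shows the multiplicity of each relevant $\rho_k$ is nonzero. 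The plan is to build an explicit intertwiner from $H_{\mathcal J}$ onto the space of an arbitrary optimal irreducible strategy.

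Let $(X,\phi)$ be optimal and irreducible on a Hilbert space $H$. Define $\Phi: H_{\mathcal J}\to H$ by $\Phi(q\psi) = q(X)\phi$ for $q\in\C\langle X\rangle$.
\begin{itemize}
\item \emph{Well defined.} By the argument at the start of this subsection, every optimal strategy satisfies $T_i^*v(X)\phi=0$, and it satisfies the relations in $\mathcal I$. Hence every element of $\mathcal J$ evaluates to zero on $(X,\phi)$.
\item \emph{Equivariant.} By construction, $\Phi\rho(X_i)=X_i\Phi$.
\item \emph{Surjective.} The image $\C\langle X\rangle\phi$ is a nonzero $X$-invariant subspace of $H$, so it equals $H$ by irreducibility. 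It is closed because $H_{\mathcal J}$ is finite-dimensional.
\end{itemize}

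Since $\rho$ is unitary, $\ker\Phi$ is $\rho$-invariant and so is $W:=(\ker\Phi)^\perp$. Then $\Phi|_W:W\to H$ is an equivariant bijection. Consequently $W$ is irreducible and equivalent to the representation $X$ of $G$, hence to some $\rho_k$, so $m_k\geq 1$.

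Next, decompose $\psi = w + w'$ with $w\in W$ and $w'\in\ker\Phi$. Then $\Phi(w)=\Phi(\psi)=\phi\neq 0$, so $w\neq 0$. By Schur's lemma applied to the two irreducible unitary representations, $\Phi|_W = cU$ for a unitary $U$ and some $c>0$. Therefore $(X,\phi/\|\phi\|)$ is unitarily equivalent via $U$ to $(\rho|_W(X), w/\|w\|)$.

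The main obstacle is matching $W$ with one of the summands $H_k^i$ in the fixed block-diagonalization. When $m_k>1$ the decomposition of the isotypic component is not unique, and $W$ need not be one of the chosen $H_k^i$. I would handle this by choosing the decomposition of the $\rho_k$-isotypic component so that $W$ is one of its summands, extending $W$ by an orthogonal decomposition of $W^\perp\cap(\text{isotypic component})$. This is legitimate since the theorem refers to some unitary block-diagonalization $P$. With this choice $\psi_{k,i}=w\neq 0$, which completes the converse.

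Alternatively, one can avoid re-choosing the decomposition. Write the isotypic component as $V_k\otimes\C^{m_k}$. The strategies $(\rho_k, \hat\psi)$ with $\hat\psi$ ranging over the nonzero components lying in that isotypic block then realize all optimal irreducible strategies coming from $\rho_k$. I would state the theorem with this "up to the choice of decomposition" interpretation made explicit.
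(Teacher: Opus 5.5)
Your argument is correct, but it takes a different route from the paper.

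\textbf{The paper's route.} It argues by contraposition with Serre's projection formula. If an irreducible $\pi$ does not occur in $\rho$, then $p_{11}^\pi=\frac{d_\pi}{|G|}\sum_{\zeta}\pi(\zeta^{-1})_{11}\rho(\zeta)$ vanishes on $H_{\mathcal J}$, so $p_{11}^\pi u\psi\in\mathcal J$. Evaluating this element on $(\pi(X),\hat\psi)$ gives the matrix unit $E_{11}$ applied to $u(\pi(X))\hat\psi$. This is nonzero for a suitable word $u$, so $(\pi(X),\hat\psi)$ violates a relation that every optimal strategy satisfies.

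\textbf{Your route.} You use that $H_{\mathcal J}$ is universal for the relations in $\mathcal J$. The map $q\psi\mapsto q(X)\phi$ is then a well-defined surjective intertwiner, and Schur's lemma makes its restriction to $(\ker\Phi)^\perp$ a scaled unitary.

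\textbf{What each buys.} The paper's argument is shorter, but it only shows that the representation type of an optimal irreducible strategy occurs in $\rho$ (i.e.\ $m_k\ge 1$). It says nothing about the state. It also tacitly assumes that the operators of an arbitrary optimal strategy satisfy the defining relations of $G$, which may include relations not in $\mathcal I$ (as in \eqref{eq:group}). Your intertwiner gives this transfer for free, since $X_i=\Phi|_W\,\rho(X_i)\,\Phi|_W^{-1}$. It also pins down the state as the normalized orthogonal projection of $\psi$ onto $W$.

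\textbf{Your multiplicity caveat.} This caveat is warranted, and the paper's proof cannot see it. Write the $\rho_k$-isotypic part of $\psi$ as $\sum_j a_j\otimes e_j\in V_k\otimes\C^{m_k}$. Together with the first half, your argument shows that the optimal states of type $\rho_k$ are exactly the nonzero vectors of $\Span\{a_j\}$. So if the top eigenspace of $\rho_k(p)$ has dimension larger than one, there are infinitely many pairwise inequivalent optimal strategies of that type, because a unitary self-intertwiner of $\rho_k$ is a scalar. No single fixed decomposition lists them all, so the statement must indeed be read as you propose.

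\textbf{One small fix in your first half.} The component $\psi_{k,i}$ satisfies $T_j^*v(\rho(X))\psi_{k,i}=0$, but not because $\mathcal J$ is two-sided: for $m_k>1$ the orthogonal projection $P$ onto $H_k^i$ is not a polynomial in $\rho(X)$. The correct reason is that $P$ commutes with $\rho(X)$, being the projection onto an invariant subspace of a unitary representation. Hence $T_j^*v(\rho(X))P\psi=P\,T_j^*v(\rho(X))\psi=0$.
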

\begin{proof}
    Suppose $(\pi, \hat \psi)$ is an irreducible strategy but not equivalent to any of the strategies $(\rho_k^i, \psi_k^i)$. Then the projection
    \[
    p_{11}^\pi = \frac{d_\pi}{|G|}\sum_{\zeta \in G} \pi(\zeta^{-1})_{11} \rho(\zeta)
    \]
    is the zero map from $H_{\mathcal J}$ to $H_{\mathcal J}$ by \cite[Proposition~8]{serre_linear_1996}. Let $U\psi \subseteq H_{\mathcal J}$ be a basis. Then, for any element $u \in U$, we have
    \[
    0 = p_{11}^\pi u\psi = q(X, \psi),
    \]
    for some $q \in \mathcal J$.
     Now consider the evaluation of $q$ on $(\pi(X), \hat \psi)$. This gives
    \[
    \frac{d_\pi}{|G|}\sum_{\zeta \in G} \pi(\zeta^{-1})_{11} \rho(\zeta) u(\pi(X)) \hat \psi = \frac{d_\pi}{|G|}\sum_{\zeta \in G} \pi(\zeta^{-1})_{11} \pi(\zeta) u(\pi(X)) \hat \psi
    \]
    which is the projection of $H_\pi$ onto itself, and is nonzero if the first entry of $u(\pi(X))\hat\psi$ is nonzero. In particular, $(\pi(X), \hat \psi)$ does not satisfy $q(\pi(X), \hat \psi)$ for all $q \in \mathcal J$, and is therefore not optimal by the sum-of-squares certificate. 
%
\end{proof}

We now apply this to CHSH mod $3$. 
\begin{theorem}\label{thm:unique}
The polynomial $p_3$ has a unique irreducible strategy $(X, Y, \psi)$ that optimizes problem \eqref{eq:ncpop}, up to unitary transformations and symmetries of the polynomial $p_3$ generated by \eqref{eq:sym1}-\eqref{eq:sym4}.
\end{theorem}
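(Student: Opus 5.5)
We will follow the extraction route of the previous subsection, using the exact certificate from Theorem~\ref{thm:chsh_exact_violation}.

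\textbf{Step 1: linear relations for optimal strategies.} From the factorizations $Z^\pi = T_\pi \hat Z^\pi T_\pi^{\sf T}$ with $\hat Z^\pi \succ 0$, every optimal strategy satisfies the linear relations
\[
\begin{pmatrix} I & -\sqrt{3/4}\,\mathrm{i} I\end{pmatrix}^{\!\!*} \text{-weighted columns of } T_\pi \text{ applied to } v_{\pi,j}(X,Y)\psi = 0 .
\]
More precisely, for every column $t$ of $T_\pi$ and every $j$, the vector $t^{\sf T}\begin{pmatrix} I \\ -\sqrt{3/4}\,\mathrm{i} I\end{pmatrix} v_{\pi,j}(X,Y)\,\psi$ vanishes. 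Since the $v_{\pi,j}$ are symmetry-adapted combinations of words of degree at most two, we undo the symmetrization to obtain a finite list of relations of the form $u(X,Y)\psi = \sum_w c_w\, w(X,Y)\psi$, with coefficients in $F(\omega,\sqrt{3}\,\mathrm{i})$. Together with the generators of $\mathcal I$, these generate the left ideal $\mathcal J$.

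\textbf{Step 2: compute the quotient $H_{\mathcal J}$.} We compute a noncommutative Gröbner basis of $\mathcal J$ as a left ideal over $\C\langle X,Y\rangle/\mathcal I$, working exactly over the number field. We expect the rewriting rules to express every degree-two word applied to $\psi$ in terms of degree $\le 1$ words applied to $\psi$. This would make $H_{\mathcal J}$ finite dimensional, with an explicit basis of normal words $U\psi$, and give explicit matrices $\rho(X_k),\rho(Y_l)$. We would then verify that these matrices generate a finite group $G$, for instance by enumerating products until closure. Heuristically, $\rho(X)$ and $\rho(Y)$ should act as clock and shift type operators on $\C^3\otimes\C^3$, up to phases.

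Step~2 is the main obstacle. The Gröbner computation over a cubic field with extra roots of unity may blow up, and finiteness of $G$ is not guaranteed a priori. If direct Gröbner reduction stalls, we would first exploit the symmetry group $\Gamma$ to reduce the number of relations. We would also use the commutation of $X$ with $Y$ to work in the finite-dimensional space spanned by words $X$-word times $Y$-word of bounded degree, reducing the problem to exact linear algebra: a kernel computation on a matrix of relations.

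\textbf{Step 3: decompose $\rho$ and apply Theorem~\ref{thm:all_opt_strats}.} We decompose $\rho$ into irreducibles, using the character table of $G$ or central idempotents, and keep the components $\rho_k^i$ on which $\psi$ has a nonzero projection. By Theorem~\ref{thm:all_opt_strats}, these are exactly the optimal irreducible strategies. We expect to find four inequivalent nine-dimensional components, i.e.\ three-by-three on each side, each with a maximally entangled state.

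\textbf{Step 4: relate the components by symmetries.} Finally, we show that the maps \eqref{eq:sym1}--\eqref{eq:sym4}, acting on representations by precomposition, permute these components transitively. For each pair of components we exhibit an explicit symmetry and check that the image component has the same character as the target component. This gives uniqueness up to unitary equivalence and the symmetries. It also shows that $\psi$'s component in each irreducible is uniquely determined up to a phase, which is consistent with the irreducible picture above.
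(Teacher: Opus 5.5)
Your proposal follows the paper's proof. The paper likewise builds $\mathcal J$ from the columns of $T_\pi$ and computes a noncommutative Gr\"obner basis. It finds that the $\rho(X_i),\rho(Y_j)$ generate the finite group $G\times G$ with $G\cong C_3\times((C_3\times C_3)\rtimes C_3)$, obtains exactly four optimal irreducible $9\times 9$ strategies (via Theorem~\ref{thm:all_opt_strats}, or by testing all irreducibles of $G\times G$), and relates them by the swap and inversion-with-index-negation symmetries.

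Your computational variants are equally valid: enumerating the group closure instead of checking the presentation of $G$ modulo $\mathcal J$, and comparing characters instead of exhibiting explicit unitary equivalences. One small correction: uniqueness of the state up to phase does not come from the symmetry step. It comes from each optimal irreducible appearing with multiplicity one in $H_{\mathcal J}$, equivalently a simple top eigenvalue of $p_3$ on it.
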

\begin{proof}
Let $(\beta_q, \bigoplus_\pi T_\pi \hat Z_\pi T_\pi^{\sf T})$ be the exact sum-of-squares certificate used in the proof of Theorem~\ref{thm:chsh_exact_violation}, and let $\{v_{\pi,j}\}$ be the vectors containing the symmetry adapted basis such that
\[
\beta_q - p_3 = \sum_{\pi} \sum_{j=1}^{d_\pi} v_{\pi,j}^* \begin{pmatrix} I  & \sqrt{3/4}\mathrm{i}I\end{pmatrix} T_\pi \hat Z_\pi T_\pi^{\sf T} \begin{pmatrix} I  \\ -\sqrt{3/4}\mathrm{i}I\end{pmatrix} v_{\pi, j} \mod \mathcal I.
\]
Let $\mathcal J \subseteq \C\langle X, Y, \psi\rangle$ be the two-sided ideal generated by the standard relations on $X_i, Y_j$ (commutation, idempotency), together with the polynomials 
\[
T_{\pi, i}^{\sf T} \begin{pmatrix}
    I \\ -\sqrt{3/4}\mathrm{i}I
\end{pmatrix} v_{\pi, j}(X, Y) \psi 
\]
for every column $T_{\pi, i}$ of $T_\pi$. We use Nemo.jl and Hecke.jl \cite{fieker_nemohecke_2017} to compute a non-commutative Gr\"obner basis for $\mathcal J$, and define the representation $\rho$ as before. The matrices $\rho(X_i)$ form the group
\begin{equation}
    \label{eq:group}
    G = \langle X_1, X_2, X_3 : X_i^3 = I, X_i X_j X_k = X_kX_iX_j \, \text{for all}\, i\neq j\neq k\neq i\rangle,
\end{equation}
where it can be checked that $(f_1-f_2)\psi \in \mathcal J$ for each equality $f_1=f_2$ in the definition of the group by reducing it with respect to the Gr\"obner basis. 
The group $G$ is isomorphic to the group $C_3 \times ((C_3 \times C_3) \rtimes C_3)$, which is the group 81.12 from the SmallGroups library \cite{besche_smallgrp_2024} in GAP \cite{noauthor_gap_2025}. The same holds for the group generated by $\rho(Y_i)$, so the group generated by all operators is given by $G \times G$. Note that $(C_3 \times C_3) \rtimes C_3$ is the Heisenberg-Weyl group on 3 elements. 

We obtain the irreducible representations of $G$ from GAP, and the irreducible representations of $G \times G$ are tensor products of pairs of irreducible representations of $G$ by \cite[Theorem~10]{serre_linear_1996}. Trying all irreducible representations of $G \times G$ shows that there are $4$ irreducible representations that give an optimal strategy. 

Alternatively, we can directly block-diagonalize $\rho$, which gives all optimal strategies by Theorem~\ref{thm:all_opt_strats}. This gives $4$ tuples of $9 \times 9$ matrices, where each matrix can be further decomposed as a tensor product between two $3 \times 3$ matrices, such that $X_i = \hat X_i \otimes I_3$ and $Y_j = I_3 \otimes \hat Y_j$. 

Using the Jordan normal form, we apply transformations to simplify the matrices. One of the tuples then gives the matrices
\begin{equation*}
\hat X_1 = Z^2X^2,\; \hat X_2 = X,\; \hat X_3 = Z,\;\; \hat Y_1 = X,\; \hat Y_2 = Z^2X^2,\; \hat Y_3 = Z, 
\end{equation*}
where $X$ and $Z$ are matrices acting on the vector space spanned by $|j\rangle$ for $j=0, \dots, 2$, with $X|j\rangle = |j+1 \mod 3\rangle$ and $Z|j\rangle = \omega^j|j\rangle$, where $\omega=\exp(\frac{2\pi \mathrm{i}}{3})$. The other tuples are (unitary transformations of) the result of applying the transformations
\[
(\hat X_i, \hat Y_j) \mapsto (\hat Y_i, \hat X_j)
\]
and/or
\[
(\hat X_i, \hat Y_j) \mapsto (\hat X_i^{-1}, \hat Y_{(-i \mod 3)}^{-1})
\]
on this tuple. The states corresponding to the tuples are all equal to the state
\[
\psi = c(1, z-1, \omega^{-1}(-z^2+2), z-1, -z^2+2, \omega^{-1}, \omega^{-1}(-z^2+2), \omega^{-1}, \omega(z-1))
\]
where $z \approx 1.5320889$ satisfies $1-3z+z^3 = 0$, and $c = \sqrt{-9z + 18}$ is a normalizing constant. The Julia code to verify that the equalities defining the groups generated by $\rho(X_i)$ and $\rho(Y_i)$ are as above, to find and simplify the tuples of matrices, and to verify the equivalences, is available at \cite{code_chshmod3}.
\end{proof}
A state $\psi$ is maximally entangled if the reduced states $\Tr_A(\psi\psi^*)$ and $\Tr_B(\psi\psi^*)$ are maximally mixed, i.e., equal to $1/\dim (B) I_B$ and $1/\dim(A) I_A$, respectively. It can easily be checked that this is the case for the state given in the proof of Theorem~\ref{thm:unique}.
\ifx\compareversion\currentversion
\subsubsection*{Flatness}\label{sec:flatness}
\else
\subsubsection{Flatness}\label{sec:flatness}
\fi
In this section, we assume that the entries of the border vector $v_n$ form a basis of the polynomials in $\C\langle X \rangle_n/\mathcal I$. Without loss of generality we may order the entries of the vectors such that $v_{n-1}$ is the first part of $v_n$. Let $M_n$ be the corresponding moment matrix. 

As will be explained in \ifx\compareversion\currentversion the Methods section, \else Section~\ref{sec:SOS_cond_exp} \fi if $\C\langle X \rangle/\mathcal I$ is a group algebra, one can use the support of the polynomial $p$ together with $1$ as $v_1$ as border vector.
In that case, the variables that can be extracted using flatness are the elements in the support of $p$. 


Let $\delta$ be such that the generators of $\mathcal I$ are of degree at most $2\delta$. 
A moment matrix $M_n$ is called $\delta$-\emph{flat} if the rank of the restriction $M_{n-\delta}$ corresponding to $v_{n-\delta}$ is equal to the rank of $M_n$. Flatness of an optimal solution implies optimality (i.e., increasing the level of the hierarchy will not improve the bound anymore and $\langle G_p, M_n\rangle = \beta_q$) \cite{navascues_convergent_2008}, and can be used to extract a minimizer.

Let $M_n = R_n^{*}R_n$ be a Gram decomposition of $M_n$. Then since $M_n$ is flat, the Gram vectors corresponding to words of degree $n$ can be expressed in terms of the Gram vectors of the words up to degree $n-\delta$. Let $\{w_a\}_{a \in U}$ be a basis of the column space of $R_{n-\delta}$, where $w_a$ is the column corresponding to a word $a$ and $U \subseteq \C\langle X \rangle_{n-\delta}/\mathcal I$. Let $V = \Span\{w_a\}_{a \in U}$. Define the function $\rho:\{X_i\}_i \to \mathcal L(V)$ by $\rho(X_i)w_a = w_{X_ia}$. Since $H_n$ is flat, $w_{X_iu}$ is a linear combination of the vectors $\{w_u\}_{u \in U}$, so $\rho(X_i)$ indeed maps vectors from $V$ to $V$. 

The matrix $M_n$ defines a linear functional $L:\C\langle X \rangle_{2n} /\mathcal I\to \C$ by $L(p^*q) = M_{p,q}$, and the inner product $\langle w_p, w_q\rangle = L(p^*q)$ makes $V$ a Hilbert space. The matrix $\rho(X_i)$ is unitary with respect to the inner product, because $\langle \rho(X_i)w_a, \rho(X_i)w_b\rangle_M = L((X_ia)^*X_ib) = L(a^*b) = \langle w_a, w_b\rangle_M$ for all $a,b \in U$ due to the constraints on $M$ in \eqref{eq:moment_sdp}. In particular, this means that $\rho(X_i)^* = \rho(X_i^*)$. Let $q \in \mathcal I$ be of degree at most $2\delta$. Then 
\begin{align*}
\langle w_a, q(\rho(X))w_b \rangle &= \sum_{j} c_j \langle w_a, \prod_{i=1}^{|j|} \rho(X_{j_i}) w_b\rangle \\
&= \sum_j \langle \prod_{i=1}^{\max\{0,|j|-\delta\}}\rho(X_{j_{|j|-\delta-i+1}})^*w_a, \prod_{i=\max\{1, |j|-\delta+1\}}^{|j|}\rho(X_{j_i}) w_b\rangle\\
&= \sum_j c_j L(a^*\prod_{i=1}^{|j|} X_{j_i}b) \\
&= L(a^*q(X)b) = 0.
\end{align*}
So the matrices $\rho(X_i)$ satisfy the same relations as $X_i$. In particular, they generate a group $G$, and as in the previous section we assume that $G$ is finite. This gives a representation of $G$ on $V$.

Furthermore, $\langle w_1, w_1\rangle_M = (M_n)_{1,1} = 1$, and 
\[
\langle w_1, p(\rho(X))w_1\rangle_M = \langle w_1, \sum_{a\in U}p_a a(\rho(X))w_1\rangle_M = \sum_{a\in U} p_a L(a) = \langle G_p, M_n \rangle,
\] 
where we write $a(X)$ for the evaluation of the word $a$ at the matrices $X$. Note that the inner product between $G_p$ and $M_n$ is the trace inner product between two matrices. This shows that $(\rho(X), w_1)$ is a feasible solution with $\beta(\rho(X), w_1) = \langle G_p, M_n\rangle = \beta_q$. 


In the following theorem, we use that the moment matrix (used in this section) and the sum-of-squares certificate (used in the previous section) come from dual semidefinite programs to show that the methods lead to the same construction.
\begin{theorem}\label{thm:equiv_flat_groebner}
    Let $(\lambda, Z; M) \in \R \times \C^{N \times N} \times \C^{N \times N}$ be a primal-dual optimal solution with $\mathrm{rank} Z + \mathrm{rank} M = N$ and $\lambda = \langle G_p, M\rangle$. If $M$ is $\delta$-flat, then $H_{\mathcal J}$ is finite dimensional, and the representations defined in the previous sections are equivalent.
\end{theorem}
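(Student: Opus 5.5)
The plan is to use complementary slackness together with the strict complementarity assumption $\rank Z + \rank M = N$. Optimality gives $\langle Z, M\rangle = \lambda - \langle G_p, M\rangle = 0$. Since both matrices are positive semidefinite, this forces $ZM = 0$, so the column space of $M$ lies in $\ker Z$. Strict complementarity then upgrades this inclusion to equality, $\ker Z = \operatorname{col}(M)$. Equivalently, $\ker M = \operatorname{col}(Z) = \operatorname{col}(T)$ for $Z = T\hat Z T^*$ with $\hat Z \succ 0$.

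With the Gram decomposition $M = R^*R$ (columns $w_a$), we have $\ker M = \ker R$. Hence a coefficient vector $c$ satisfies $\sum_a c_a w_a = 0$ exactly when $c$ lies in the span of the columns $T_i$. This is the key dictionary between the two constructions: the linear relations among the Gram vectors are precisely the generators $T_i^* v(X)\psi$ of $\mathcal J$, after identifying $\psi$ with $w_1$ (taking conjugates as appropriate).

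Next I will define the map $\Phi: H_{\mathcal J} \to V$ by $\Phi(u\psi) = w_u$ for words $u$, extended linearly, and first show it is well defined on degree $\le n$ elements.

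1. By the dictionary, every generator $T_i^*v(X)\psi$ maps to $0$.
2. The relations coming from $\mathcal I$ map to zero because the moment constraints give $w_{a} = w_b$ whenever $a = b \bmod \mathcal I$.
3. Multiplying on the left by $X_j$ is compatible, since $\rho(X_j)w_a = w_{X_ja}$.

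To handle arbitrary elements of the ideal $\mathcal J$, whose generators get multiplied by words of unbounded degree, I will use flatness. The representation $\rho$ on $V$ is a genuine group representation satisfying $w_{ua} = \rho(u)w_a$ for all words $u$. So $\Phi$ extends to all of $\C\langle X\rangle\psi$ by $\Phi(u\psi) = \rho(u)w_1$, and it kills $\mathcal J$ because any element $u\, T_i^*v(X)\psi$ maps to $\rho(u)\sum_a (T_i)_a w_a = 0$. Thus $\Phi$ descends to $H_{\mathcal J}$, is surjective onto $V$ (since $V$ is spanned by $w_a$, $a\in U$), and intertwines the two actions of the $X_i$.

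The main obstacle is injectivity, which is also what yields finite dimensionality of $H_{\mathcal J}$. I must show that if $\rho(u)w_1 = 0$ for a polynomial $u$, then $u\psi \in \mathcal J$. My approach is:

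1. Use flatness to reduce $u$ modulo $\mathcal J$ to a polynomial of degree at most $n-\delta$. For each $a \in U$ and each $i$, the Gram vector $w_{X_ia}$ is a linear combination of the $w_b$ with $b\in U$. That linear relation is a vector in $\ker M = \operatorname{col}(T)$, provided $X_i a$ has degree at most $n$; this holds since $\delta \ge 1$. So $X_ia\psi - \sum_b c_b b\psi \in \mathcal J$.
2. By induction on degree, every $u\psi$ is congruent modulo $\mathcal J$ to an element of $\Span\{b\psi : b\in U\}$. This already shows $\dim H_{\mathcal J} \le |U|$.
3. If a combination $\sum_b c_b b\psi$ maps to $\sum_b c_b w_b = 0$, then all $c_b = 0$, because the $w_b$ with $b\in U$ are a basis of $V$.

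Hence $\Phi$ is a linear bijection intertwining the actions, and the two representations are equivalent. Since both are unitarizable representations of a finite group, the equivalence can moreover be taken unitary.
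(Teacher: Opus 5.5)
Your proposal is correct and follows the paper's route. Complementary slackness together with $\rank Z + \rank M = N$ identifies $\ker M$ with $\mathrm{col}(Z)=\mathrm{col}(T)$ (up to the conjugation convention). The flatness relations $w_{X_i a}=\sum_{b\in U} c_b w_b$ then give identical matrices for the two actions in the bases $\{a\psi\}_{a\in U}$ and $\{w_a\}_{a\in U}$, and your intertwiner $\Phi$ is just a repackaging of that entrywise comparison.

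Your version is slightly more complete than the paper's. The well-definedness of $\Phi$ on $\mathcal J$ is what guarantees that $\{a\psi : a\in U\}$ stays linearly independent modulo $\mathcal J$, and your induction gives $\dim H_{\mathcal J}\le |U|$ in all degrees; the paper only asserts both points. That well-definedness rests on the standard flat-extension consistency $\rho(u)w_1=w_u$ for $\deg u\le n$, which you invoke without proof and which the paper also uses implicitly.
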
 
We provide the proof of this theorem in \ifx\compareversion\currentversion the Supplementary material, \else Appendix~\ref{sec:pf_thm_equiv}, \fi and give here a sketch of the proof.
\begin{proof}[Sketch of the proof.]
    From semidefinite programming duality, we obtain $\langle Z, M \rangle = 0$. Together with $\rank Z + \rank M = N$, this allows us to equate the nullspace of $M$ to the column space of $Z$. Since the nullspace of $M$ gives relations satisfied by the representation defined using flatness, and the column space of $Z$ defines the ideal used to define the representation in the previous section, this gives the desired connection between the two representations.
\end{proof}

\ifx\compareversion\currentversion
\subsection*{Robust self-testing with CHSH mod $3$}\label{sec:selftesting}
\else
\subsection{Robust self-testing with CHSH mod $3$}\label{sec:selftesting}
\fi
Theorem~\ref{thm:unique} gives us the only possible shapes an optimal strategy can have: the state is a direct sum of scaled maximally entangled states, possibly extended with an auxiliary state through a tensor product, and the observables are direct sums of the corresponding irreducible representations, possibly extended with the identity for the auxiliary state. In this section, we make this statement robust.   

Let $G$ be a finite group and $\varepsilon \geq 0$. For the majority of this section, we take $G$ to be the group defined in \eqref{eq:group}, but the following definition and Theorem~\ref{thm:GH} hold for general groups $G$. Let $\mathcal H_A$ and $\mathcal H_B$ Hilbert spaces of dimensions $n_A$ and $n_B$, with $\psi \in \mathcal H_A \otimes \mathcal H_B$, and $R = \Tr_B(\psi\psi^*)$ the reduced density matrix for system $A$. We denote the group of unitary matrices of size $n\times n$ by $U_n(\C)$.
A function $f:G \to U_{n_A}(\C)$ is an $(\varepsilon, \psi)$-representation for $G$ if 
\[
\frac{1}{|G|^2}\sum_{x,y \in G}\|f(x)f(y)^* - f(xy^{-1})\|_R^2 \leq \varepsilon,
\]
where $\|A\|_R^2 = \Tr(AA^*R)$. 

Gowers and Hatami showed that an $(\varepsilon, \psi)$-representation is $\varepsilon$-close to an actual representation:
\begin{theorem}[Gowers-Hatami \cite{gowers_inverse_2017}]\label{thm:GH}
    Let $G$ be a finite group and suppose $f:G \to U_{n_A}(\C)$ is an $(\varepsilon, \psi)$-representation for $G$. Then there is some $n_A' \geq n_A$, a representation $\tau:G \to U_{n_A'}(\C)$ of $G$ and an isometry $U:\C^{n_A} \to \C^{n_A'}$ such that 
    \[
    \frac{1}{|G|} \sum_{x \in G}\|f(x) - U^* \tau(x) U\|_R^2 \leq \varepsilon.
    \]
\end{theorem}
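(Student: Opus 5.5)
We prove the Gowers–Hatami theorem by Fourier-analytic averaging, as in the original argument and its state-dependent variant. Let $\hat G$ be a complete set of unitary irreducible representations of $G$, with $d_\pi = \dim \pi$. We use the block-diagonal direct sum
\[
\tau(x) = \bigoplus_{\pi \in \hat G} \pi(x) \otimes I_{n_A} \otimes I_{d_\pi}
\]
on $\C^{n_A'}$, where $n_A' = n_A\sum_\pi d_\pi^2 = n_A|G|$. For the isometry we take
\[
U v = \bigoplus_{\pi\in\hat G} \sqrt{d_\pi}\,\sum_{i,j=1}^{d_\pi} e_i \otimes \hat f(\pi)_{ij} v \otimes e_j,
\qquad \hat f(\pi) = \frac{1}{|G|}\sum_{x\in G} f(x)\otimes \overline{\pi(x)}.
\]
In this definition $\hat f(\pi)_{ij}$ is the $n_A\times n_A$ block of the Fourier coefficient $\hat f(\pi)$, so $Uv$ is the vectorisation of $\hat f(\pi)$ weighted by $\sqrt{d_\pi}$. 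Equivalently, $Uv = \frac{1}{|G|}\sum_{y}\big(\bigoplus_\pi \sqrt{d_\pi}\,(\pi(y)\otimes I)\big)\otimes f(y)v$, suitably arranged.

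The first key step is to check that $U$ is an isometry. Computing $U^*U$ via Schur orthogonality gives
\[
U^*U = \sum_\pi d_\pi \operatorname{Tr}_{\pi}\big(\hat f(\pi)^*\hat f(\pi)\big) = \frac{1}{|G|}\sum_x f(x)^*f(x) = I.
\]
This is the Plancherel identity applied to $f$, using that $f(x)$ is unitary. The second key step is to compute the cross term
\[
U^*\tau(x)U = \frac{1}{|G|}\sum_{y} f(xy)^{*}{}^{\!\!\!\;}\ldots
\]
More precisely, Schur orthogonality collapses the sum over $\pi$ to a delta function on $G$. With the correct ordering of adjoints, the result is $U^*\tau(x)U = \frac{1}{|G|}\sum_{y\in G} f(xy)f(y)^*$, an average of the approximate-homomorphism quantities. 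Fixing conventions (left versus right action, complex conjugation) so that this identity comes out exactly is where care is needed, and I will adjust the definition of $U$ to make it hold.

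The final step is the estimate. Expand
\[
\|f(x)-U^*\tau(x)U\|_R^2 = \Big\|\frac{1}{|G|}\sum_y \big(f(x) - f(xy)f(y)^*\big)\Big\|_R^2 .
\]
By convexity of $\|\cdot\|_R^2$ (Jensen), this is at most $\frac{1}{|G|}\sum_y\|f(x)-f(xy)f(y)^*\|_R^2$. Since $f(y)^*$ is unitary, $\|f(x)-f(xy)f(y)^*\|_R = \|f(x)f(y)-f(xy)\|_R$ after right multiplication, provided the weighting by $R$ is compatible with that side. Here $\|A\|_R^2 = \Tr(AA^*R)$ is invariant under right multiplication of $A$ by a unitary, because $AW(AW)^* = AA^*$. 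Averaging over $x$ and substituting $x \mapsto xy^{-1}$ or $y\mapsto y^{-1}$ turns $\frac{1}{|G|^2}\sum_{x,y}\|f(x)f(y)-f(xy)\|_R^2$ into the defining sum $\frac{1}{|G|^2}\sum_{x,y}\|f(x)f(y)^*-f(xy^{-1})\|_R^2\le\varepsilon$.

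The main obstacle is this last step. It requires arranging the formula for $U^*\tau(x)U$ so that the error term is multiplied by unitaries only on the right. This way the $R$-weighted norm, which is invariant only under right unitary multiplication, can absorb them. If the natural computation produces unitaries on the left instead, I will transpose the construction (replace $f$ by $x\mapsto f(x^{-1})^*$, or use right Fourier coefficients) so that the invariance is used on the correct side.
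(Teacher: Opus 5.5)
The paper does not prove this theorem; it quotes it from Gowers--Hatami and takes the block form of $\tau$ from Vidick's Fourier-analytic proof. Your outline is that argument, with Jensen in place of Vidick's inner-product expansion, which gives the same constant. The gap is that the identity everything rests on, $U^*\tau(x)U=\frac{1}{|G|}\sum_y f(xy)f(y)^*$, is only asserted, and none of your explicit constructions produces it.

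As written, $\tau(x)$ acts on the first factor of $Uv$. The coefficient of $f(y)^*f(z)$ is then, up to $|G|^{-2}$, equal to $\sum_\pi d_\pi\Tr\bigl(\pi(x)^{\sf T}\pi(yz^{-1})\bigr)$. This depends on the chosen bases, so Schur orthogonality does not apply directly. If instead $\pi(x)$ acts on the last factor, the sum does collapse, but to
\[
U^*\tau(x)U=\frac{1}{|G|}\sum_{y\in G}f(y)^*f(yx).
\]
This outcome is forced. Any $U$ assembled from the vectors $f(y)v$, including your ``equivalent'' formula, makes $U^*\tau(x)U$ a combination of products $f(y')^*f(y)$, so the adjoint always sits on the left. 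Then
\[
f(x)-f(y)^*f(yx)=f(y)^*\bigl(f(y)f(x)-f(yx)\bigr),
\]
whose $\|\cdot\|_R$-norm is the hypothesis term measured in the state $f(y)Rf(y)^*$ rather than $R$. The hypothesis does not control that quantity. So the transposition you list as a fallback is not optional: it is exactly the step that makes the state-dependent version work.

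Your suggested fix does close the gap. The most direct version uses the left regular representation. On $\C^{G}\otimes\C^{n_A}$, let $\tau(x)=L(x)\otimes I_{n_A}$ with $L(x)e_y=e_{xy}$, and set $Uv=|G|^{-1/2}\sum_y e_y\otimes f(y)^*v$. Then $U^*U=\frac{1}{|G|}\sum_y f(y)f(y)^*=I$ and $U^*\tau(x)U=\frac{1}{|G|}\sum_y f(xy)f(y)^*$ are one-line checks. Here $n_A'=|G|n_A$, and the decomposition $L\cong\bigoplus_\pi\pi^{\oplus d_\pi}$ recovers the block form of $\tau$ used in the paper. Equivalently, you can run your last-factor Fourier construction on $h(x)=f(x^{-1})^*$, which gives $\frac{1}{|G|}\sum_w f(xw)f(w)^*$.

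With that in place, the rest of your argument is correct:
\begin{enumerate}
\item Jensen is legitimate because $\|A\|_R=\|A^*R^{1/2}\|_F$ is a seminorm.
\item Right multiplication by $f(y)$ turns $\|f(x)-f(xy)f(y)^*\|_R$ into $\|f(x)f(y)-f(xy)\|_R$.
\item Substituting $x\mapsto xy^{-1}$ and then right-multiplying by $f(y)^*$ yields the defining sum.
\end{enumerate}
Drop the alternative substitution ``$y\mapsto y^{-1}$''. It produces $f(x)f(y^{-1})$, and $f(y^{-1})\neq f(y)^*$ for an approximate representation.
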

From the proof by Vidick \cite{vidick_pauli_2017}, the representation $\tau$ can be decomposed as $\bigoplus_{\pi} I_{n_A} \otimes I_{d_\pi} \otimes \pi$, where the direct sum runs over all irreducible representations of $G$. Of course, it is possible to replace $n_A$ by $n_B$, and to take $R = \Tr_A(\psi\psi^*)$.

Recall that the group $G$ in \eqref{eq:group} is isomorphic to $H = C_3 \times ((C_3 \times C_3) \rtimes C_3)$, the group 81.12 from the SmallGroups library from GAP \cite{besche_smallgrp_2024}. The isomorphism $\phi: H \to G$ is defined by 
\begin{equation}\label{eq:Xi_to_fi}
\begin{aligned}
    \phi(\gamma_1) &= X_1^2X_2^2,\\
    \phi(\gamma_2) &= X_1^2X_3(X_3X_2^{-1}X_3^{-1}X_2)^2X_3,\\
    \phi(\gamma_3) &= X_1(X_2X_3X_2^{-1}X_3^{-1})^4X_2X_3,\\
    \phi(\gamma_4) &= (X_2^{-1}X_3^{-1}X_2X_3)^4.
\end{aligned}
\end{equation}
The generators $\gamma_1, \dots, \gamma_4$ of $H$ satisfy $\gamma_i^3 = I$ for all $i$, $[\gamma_i, \gamma_j] = 0$ if $j \in \{3,4\}$, and $\gamma_2\gamma_1 = \gamma_4\gamma_1\gamma_2$. The elements of $H$ are of the form $\prod_{i=1}^4 \gamma_i^{j_i}$ with $j \in \{0, 1, 2\}^4$. We usually write $\gamma_i(X_1, X_2, X_3)$ or $\gamma_i(X)$ for $\phi(\gamma_i)$ evaluated on $X=(X_1, X_2, X_3)$ (where $(X_1, X_2, X_3)$ 
are matrices that do not necessarily satisfy the relations defining the group $G$), or $\gamma_i$ if it is clear from the context that we mean the evaluated isomorphism and not the generators of the group $H$.

\begin{lemma}\label{lem:eps_psi_rep}
    Suppose $(X \otimes I, I \otimes Y, \psi)$ is a feasible strategy 
    with $\beta_q - \psi^* p_3(X \otimes I, I \otimes Y)\psi \leq \varepsilon$. Then there is some $\varepsilon' = O(\varepsilon)$ such that $f_A:G \to U_{n_A}(\C)$ and $f_B:G \to U_{n_B}(\C)$ defined by
    \[
    f_A(\phi(\prod_i \gamma_i^{j_i})) = \prod_i \gamma_i(X_1, X_2, X_3)^{j_i}
    \]
    and
    \[
    f_B(\phi(\prod_i \gamma_i^{j_i})) = \prod_i \gamma_i(Y_1, Y_2, Y_3)^{j_i}
    \]
    are $(\varepsilon', \psi)$-representations.
\end{lemma}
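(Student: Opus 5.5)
The plan is to derive approximate relations from the exact sum-of-squares certificate of Theorem~\ref{thm:chsh_exact_violation}. Then we propagate them to the group relations of $G$ in \eqref{eq:group}, acting on $\psi$, using the standard ``move operators across the tensor product'' trick.

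\emph{Step 1 (approximate annihilators).} For a near-optimal strategy, write the certificate as in the proof of Theorem~\ref{thm:unique}. Because $\hat Z_\pi\succ 0$, its smallest eigenvalue $\mu>0$ is a fixed constant, and
\[
\varepsilon\ \geq\ \beta_q-\psi^*p_3\psi=\sum_{\pi,j}\Bigl\|\sqrt{\hat Z_\pi}\,T_\pi^{\sf T}\begin{pmatrix} I\\ -\sqrt{3/4}\mathrm{i}I\end{pmatrix}v_{\pi,j}\psi\Bigr\|^2 .
\]
Hence every generator $g=T_{\pi,i}^{\sf T}(\cdots)v_{\pi,j}$ of $\mathcal J$ satisfies $\|g(X\otimes I,I\otimes Y)\psi\|\le C\sqrt{\varepsilon}$.

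\emph{Step 2 (propagation through the ideal).} The Gr\"obner basis computation shows that each polynomial $(f_1-f_2)\psi$ lies in $\mathcal J$, where $f_1=f_2$ is one of the group relations \eqref{eq:group} in $X$. Examples are $X_iX_jX_k-X_kX_iX_j$, together with the defining relations of $H$ pulled back via $\phi$. Each such membership is a finite identity
\[
(f_1-f_2)\psi=\sum_l a_l\, g_l\psi+q\psi, \qquad q\in\mathcal I .
\]
The $a_l$ are words in unitaries, so they have operator norm at most $1$, and $q$ vanishes on any feasible strategy. Therefore $\|(f_1-f_2)\psi\|\le C'\sqrt{\varepsilon}$. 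Only $\mathcal I$, not $\mathcal J$, is used as an exact relation, so the argument remains valid for non-optimal strategies.

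\emph{Step 3 (all products of group elements).} Next, show $\|(f_A(x)f_A(y)^*-f_A(xy^{-1}))\otimes I\,\psi\|^2=O(\varepsilon)$ for all $x,y\in G$; note this quantity is exactly $\|\cdot\|_R^2$ with $R=\Tr_B\psi\psi^*$. The difficulty is that the word $f_A(x)f_A(y)^*$ must be rewritten into the normal form $\prod_i\gamma_i^{j_i}$ by applying relations in the middle of a word, whereas Step 2 only controls relations acting directly on $\psi$. To handle this, use the approximate ``transfer'' relations that the certificate also yields, of the form $\|(u(X)\otimes I-I\otimes \tilde u(Y))\psi\|=O(\sqrt\varepsilon)$. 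Concretely, check via the Gr\"obner basis that $(X_i-\tilde u_i(Y))\psi\in\mathcal J$ for suitable words $\tilde u_i$; the explicit optimal strategy, with its maximally entangled state, suggests these exist. With them, the standard telescoping argument works: any $X$-operator standing to the left of the relation is moved onto Alice's side and then converted into $Y$-operators, which commute with Alice's operators and are unitary. Each move costs $O(\sqrt\varepsilon)$, so a relation applied anywhere inside a word of bounded length costs $O(\sqrt\varepsilon)$. Since $G$ is finite, only finitely many pairs $(x,y)$ and rewriting paths occur, so the constants are uniform.

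\emph{Step 4 (conclusion).} Averaging the squared bounds over $x,y$ gives $\varepsilon'=O(\varepsilon)$ for $f_A$. For $f_B$, apply the symmetry \eqref{eq:sym1}, which swaps $X$ and $Y$ and preserves $p_3$ and $\beta_q$, so the same bound holds. Finally, the $f_A(x)$ are products of unitaries and hence unitary, as the definition requires.

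The main obstacle is Step 3: establishing the transfer relations from the Gr\"obner basis and organizing the rewriting into normal form so that errors accumulate only additively, with constants depending on $G$ alone. Step 3 relies on the Gr\"obner basis supplying the transfer relations $(X_i-\tilde u_i(Y))\psi\in\mathcal J$. If that is unavailable, the fallback is to verify directly that $(w(X)(f_1-f_2)(X)w'(X))\psi\in\mathcal J$ for each needed word $w$, with $w'$ the suffix determined by the normal-form rewriting path, and then apply Step 2 verbatim.
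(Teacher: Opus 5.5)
Your proposal is correct, and its main route differs from the paper's. Steps 1, 2 and 4 agree with the paper. Your Step 3 fallback is essentially what the paper actually does. It checks by Gr\"obner reduction a finite list of memberships in $\mathcal J$ with the normal-form suffixes already built in, e.g.\ $\gamma_1^{i_1}\gamma_2^{i_2}\gamma_3^{k}\gamma_4^{i_4}\psi\approx\gamma_3^{k}\gamma_1^{i_1}\gamma_2^{i_2}\gamma_4^{i_4}\psi$ and $\gamma_j^3\prod_{l>j}\gamma_l^{i_l}\psi\approx\prod_{l>j}\gamma_l^{i_l}\psi$. It then orders the rewriting of $f(x)f(y)$ into normal form so that everything to the right of an applied relation is covered by a verified membership. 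The only thing to its left is a unitary prefix, which costs nothing, so your extra prefix $w$ is unnecessary.

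Your main route transfers the suffix to Bob via $(X_i-\tilde u_i(Y))\psi\in\mathcal J$. What this buys is a uniform argument: every identity of $G$ holds on $\psi$ up to $O(\sqrt{\varepsilon})$ from the defining relations alone. That includes the identities with adjoints needed for $f(x)f(y)^*$. This matters because, with the paper's convention, $\|A\|_R^2=\Tr(AA^*R)=\|(A^*\otimes I)\psi\|^2$, so you actually need the adjoint version of your displayed quantity. The paper's route needs no transfer relations, so it does not depend on the entanglement structure of the optimal strategies.

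Two corrections to your Step 3.

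\begin{enumerate}
\item It is the operators between the relation and $\psi$ (the suffix) that must be moved to Bob, not those to its left.
\item The $\tilde u_i$ cannot all be single words. The coefficient matrix of the optimal state in Theorem~\ref{thm:unique} has nonzero entries of unequal moduli $1,\,|z-1|,\,|z^2-2|$. Hence it is not a Clifford unitary, so it cannot turn every Weyl operator $\pi_1(X_i)$ into a Weyl operator, i.e.\ a word in $\sigma_1(Y)$, on Bob's side.
\end{enumerate}

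Transfer relations with each $\tilde u_i$ a linear combination of words do exist. The four optimal blocks of $H_{\mathcal J}$ carry full-Schmidt-rank states. One checks from the listed tuples that their Bob representations are pairwise inequivalent; for instance the traces of $Y_1Y_2Y_3$ and $Y_2Y_3Y_2^{-1}Y_3^{-1}$ separate them. So a single group-algebra element realizes the required transfer matrix in every block. Your telescoping then works verbatim with these bounded, non-unitary operators, with only the constants changing.
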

We provide the proof of this lemma in \ifx\compareversion\currentversion the Supplementary material, \else Appendix~\ref{sec:pf_lm_epspsi_rep}, \fi and give here a sketch of the proof.
\begin{proof}[Sketch of the proof.]
Using the exact certificate, we obtain equations of the form
\[
\|\sqrt{\hat Z_{\pi}} T_\pi^{\sf T}\begin{pmatrix}
    I \\ -\sqrt{3/4}\mathrm{i}I
\end{pmatrix}v_{\pi, j}(X,Y) \psi \| \leq O(\sqrt{\varepsilon}).
\]
In particular, evaluating any element of the ideal $\mathcal J$ used in the proof of Theorem~\ref{thm:unique} at $X,Y,$ and $\psi$ gives a vector of norm $O(\sqrt{\varepsilon})$. Thus the group relations defining $G$ in equation \eqref{eq:group} are approximately satisfied. Moreover, we can reduce 
\[
    f(\phi(\prod_i \gamma_i^{j_i}))f(\phi(\prod_i \gamma_i^{k_i}))\psi - f(\phi(\prod_i \gamma_i^{j_i} \prod_i \gamma_i^{k_i}))\psi
\]
with respect to $\mathcal J$ using a Gr\"obner basis to show that this has norm at most $O(\sqrt{\varepsilon})$ for both $f=f_A$ and $f=f_B$, which implies that $f_A$ and $f_B$ are $(\psi, \varepsilon)$-representations.  
\end{proof}
For $n \in \N$, denote by $0_n$ the zero vector of length $n$. Let $(\pi_1, \sigma_1, \psi_1), \dots, (\pi_4, \sigma_4, \psi_4)$ be the optimal strategies defined in the proof of Theorem~\ref{thm:unique}. Recall that $d_\pi$ is the dimension of the representation $\pi$.
\begin{theorem}\label{thm:robust}
    Suppose that $(X \otimes I, I \otimes Y, \psi)$, where $X_i \in U_{n_A}(\C)$, $Y_i \in U_{n_B}(\C)$ and $\psi \in \C^{n_An_B}$, is a feasible strategy with $\beta_q - \psi^* p_3(X \otimes I, I \otimes Y)\psi = \varepsilon$.
    Then there is a local isometry $U = U_A \otimes U_B$ and states $\phi_1, \dots, \phi_4$ such that 
    \begin{align}
    \|U \psi &- 0_{m} \oplus \bigoplus_{i=1}^4\phi_i \otimes c_i\psi_i\| \leq O(\sqrt{\varepsilon}), \label{eq:thm:1}\\
    \|U X\otimes I \psi &- 0_{m} \oplus \bigoplus_{i=1}^4\phi_i \otimes (\pi_i(X) \otimes I) c_i\psi_i \| \leq O(\sqrt{\varepsilon}),\label{eq:thm:2} \\
    \|U I\otimes Y \psi &- 0_{m} \oplus \bigoplus_{i=1}^4\phi_i \otimes (I \otimes \sigma_i(Y)) c_i\psi_i \| \leq O(\sqrt{\varepsilon}),\label{eq:thm:3}
    \end{align}
    where $\sum_i c_i^2 = 1$, $c_i \geq 0$, and $m = n_An_B(|G| - \sum_i d_{\pi_i}^2d_{\sigma_i}^2)$. 
\end{theorem}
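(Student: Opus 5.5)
The plan follows the standard Gowers--Hatami route to robust self-testing, in the spirit of \cite{manvcinska2024constant}, and combines Lemma~\ref{lem:eps_psi_rep} with Theorem~\ref{thm:GH}.

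\textbf{Step 1: approximate representations become exact ones.} By Lemma~\ref{lem:eps_psi_rep}, the maps $f_A$ and $f_B$ are $(\varepsilon',\psi)$-representations of $G$, with $\varepsilon'=O(\varepsilon)$. Apply Theorem~\ref{thm:GH} to $f_A$ with $R=\Tr_B(\psi\psi^*)$, and to $f_B$ with $R=\Tr_A(\psi\psi^*)$. This gives isometries $V_A, V_B$ and representations $\tau_A,\tau_B$ such that
\[
\frac{1}{|G|}\sum_x\|f_A(x)-V_A^*\tau_A(x)V_A\|_R^2\le \varepsilon'.
\]
Following Vidick's proof, we may take $\tau_A=\bigoplus_\pi I_{n_A}\otimes I_{d_\pi}\otimes\pi$. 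Using $\|A\|_R^2=\|(A\otimes I)\psi\|^2$, this bound translates into a state-dependent estimate
\[
\|(f_A(x)\otimes I)\psi-(V_A^*\tau_A(x)V_A\otimes I)\psi\|=O(\sqrt{\varepsilon})
\]
for each $x$, after multiplying by $|G|$, which is a constant. The same holds on Bob's side. Vidick's construction also gives the refinement $V_A f_A(x)\approx \tau_A(x)V_A$ on $\psi$, and similarly for $B$.

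\textbf{Step 2: from group elements to the generators.} Each $X_i$ is a fixed word in the $\gamma_j(X)$ via the inverse of the isomorphism \eqref{eq:Xi_to_fi}. I would show that $X_i\otimes I\,\psi$ is $O(\sqrt{\varepsilon})$-close to $f_A(\phi^{-1}(X_i))\otimes I\,\psi$. To do this, write the difference as an element of $\mathcal J$ evaluated at $(X,Y,\psi)$, using the Gr\"obner basis as in the lemma. Each generator of $\mathcal J$ has norm $O(\sqrt{\varepsilon})$ because it comes from the near-optimal certificate, and multiplying by words in unitaries preserves norms. Next, set $U=V_A\otimes V_B$. Combining the two sides gives
\[
U(X_i\otimes I)\psi\approx(\tau_A(X_i)\otimes I)U\psi,\qquad U(I\otimes Y_j)\psi\approx(I\otimes\tau_B(Y_j))U\psi .
\]
In this step one swaps operators on the A side while $\psi$ carries the B-side isometry, and vice versa.

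\textbf{Step 3: identify $U\psi$.} The space now decomposes into blocks $\pi\otimes\sigma$ of irreducibles of $G\times G$, each carrying a multiplicity space. Write $U\psi=\bigoplus_{\pi,\sigma}\chi_{\pi,\sigma}$. Since $U$ is an isometry, the certificate identity
\[
\beta_q-p_3=\sum v^*Zv \mod \mathcal I
\]
can be evaluated on $(\tau_A,\tau_B,U\psi)$, and by Step 2 the value changes by only $O(\sqrt{\varepsilon})$. I would rather argue that each polynomial $T_{\pi,i}^{\sf T}(\cdots)v_{\pi,j}$ is $O(\sqrt{\varepsilon})$ on $U\psi$ under the exact representation, transferring the bound via Step 2. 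On each irreducible block, the linear map $\chi\mapsto\bigl(\sqrt{\hat Z}\,T^{\sf T}v(\pi,\sigma)\chi\bigr)$ has a kernel. For non-optimal blocks the kernel is trivial: by Theorem~\ref{thm:all_opt_strats} those blocks admit no nonzero optimal vector. For the four optimal blocks the kernel is one-dimensional in the irreducible factor, spanned by $\psi_i$, since the optimal state is unique (Theorem~\ref{thm:unique}). Finite dimensionality then gives a constant lower bound on the smallest nonzero singular value, so the component of $\chi_{\pi,\sigma}$ orthogonal to the kernel has norm $O(\sqrt{\varepsilon})$. Hence $\chi$ vanishes up to $O(\sqrt{\varepsilon})$ on non-optimal blocks and equals $\phi_i\otimes c_i\psi_i$ on optimal ones. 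Padding with the $0_m$ coordinates accounts for the remaining blocks, with the $m$ given in the statement. Normalising the coefficients so that $\sum c_i^2=1$ costs another $O(\sqrt{\varepsilon})$. Finally, \eqref{eq:thm:2} and \eqref{eq:thm:3} follow from Step 2 together with \eqref{eq:thm:1}, since $\tau_A(X_i)$ acts on the block $i$ as $\pi_i(X)\otimes I$.

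\textbf{Main obstacle.} I expect the main difficulty to be Step 2: moving between approximate relations and the exact representation on the correct side while controlling constants. The argument is uniform only because $G$ is a fixed finite group, the Gr\"obner reductions are fixed, and the lengths of the words involved are bounded. Each reduction step incurs an $O(\sqrt{\varepsilon})$ error, so the sum is finite and the final bound remains $O(\sqrt{\varepsilon})$.
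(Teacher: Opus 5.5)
Your proposal is correct. After the Gowers--Hatami step, however, it identifies $U\psi$ by a different mechanism than the paper.

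\textbf{How the two arguments differ.} The paper evaluates $p_3$ on the dilated strategy $(\tau_A,\tau_B,U\psi)$ and writes the value as a convex combination over the blocks $(\pi,\sigma)$. It then uses two spectral gaps of the Bell operator: $\beta_q-\beta'$ for the non-optimal blocks (Lemma~\ref{lem:estimate_ci}) and $\beta_q-\lambda_2^i$ for the optimal ones (Lemma~\ref{lem:psi_dif}). You instead transfer the certificate vectors $T_\pi^{\sf T}(\cdots)v_{\pi,j}(X,Y)\psi$ themselves to the dilation. You then use the smallest nonzero singular value of the stacked map $K_{\pi,\sigma}$ on each irreducible block. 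Since $\beta_q-p_3(\pi,\sigma)=K_{\pi,\sigma}^*\,\hat Z\,K_{\pi,\sigma}$ (with $\hat Z$ acting block-diagonally), $\ker K_{\pi,\sigma}$ is exactly the $\beta_q$-eigenspace. Your singular-value bound is therefore the same spectral gap, up to a factor $\lambda_{\max}(\hat Z)$. Both arguments also need the top eigenvalue to be simple on the four optimal blocks.

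\textbf{What your route buys.} It gets the precision right. The paper takes $\beta_q-\hat\psi^*p_3(\tau_A,\tau_B)\hat\psi=\varepsilon$ for granted. Transferring the Bell value through $O(\sqrt\varepsilon)$-accurate approximations only gives this up to $O(\sqrt\varepsilon)$, which would degrade the conclusion to $O(\varepsilon^{1/4})$. By contrast, your bound $\|K(\tau_A,\tau_B)U\psi\|=O(\sqrt\varepsilon)$ gives a value defect of $O(\varepsilon)$.

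Two further points favour your version. Your intertwining estimate $V_Af_A(x)\approx\tau_A(x)V_A$ on $\psi$ is what is actually needed for \eqref{eq:thm:2}; it holds for Vidick's construction, with error controlled by the approximate-homomorphism defect of $f_A$. The paper instead asserts that $\tau_A$ preserves the range of $U_A$, which is not true in general. Your Step~2 also makes explicit the passage from $X_i$ to the corresponding value of $f_A$, which the paper leaves implicit.

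\textbf{What the paper's route buys.} It states its constants directly through eigenvalues of $p_3$ on irreducible blocks. It also avoids a word-by-word transfer of the certificate.

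\textbf{One point to make precise in Step~3.} You need $U\,w(X,Y)\psi\approx w(\tau_A,\tau_B)U\psi$ for every word $w$ occurring in the $v_{\pi,j}$, not only for generators. The Gowers--Hatami estimates hold only on $\psi$, so you should not iterate the single-generator estimate. Instead, first reduce $w_A(X)\otimes w_B(Y)\,\psi$ to $f_A(x)\otimes f_B(y)\,\psi$ modulo $\mathcal J$. Then peel off the two sides one at a time, using that $V_Af_A(x)\otimes I$ has norm one.
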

Note that this is slightly weaker than saying that CHSH mod $3$ is a robust self-test for the maximally entangled states: even though every $\psi_i$ is maximally entangled for the optimal irreducible representations, the state $\oplus_i c_i\psi_i$ is not. In principle, we can take all optimal states $\psi_i$ to be equal, which gives a state of the form $\phi \otimes \psi_{\text{opt}}$ with $\psi_{\text{opt}}$ maximally entangled. However, because there are different optimal irreducible representations, this will not simplify equations \eqref{eq:thm:2} and \eqref{eq:thm:3}.

We provide the proof of the theorem in \ifx\compareversion\currentversion the Supplementary material, \else Appendix~\ref{sec:pf_thm_robust}, \fi and give here a sketch of the proof. The proof follows the idea of the proof of \cite[Lemma~2.4]{cui_generalization_2020}, compared to which the main differences are that we require robustness instead of exact equalities,  and that there are multiple optimal irreducible representations. 
\begin{proof}[Sketch of the proof]
    By Lemma~\ref{lem:eps_psi_rep}, $f_A$ and $f_B$ are $(\varepsilon, \psi)$-representations of $G$, so by Theorem~\ref{thm:GH}, there is a local isometry $U = U_A \otimes U_B$ such that
    \[
    \psi^*( f_A(x) \otimes f_B(y) - U_A^*\tau_A(x)U_A \otimes U_B^* \tau_B(y) U_B) \psi \leq \varepsilon 
    \]
    Then $f_A(x) \otimes f_B(y)\psi \approx \tau_A\otimes \tau_B U\psi$. We can decompose
    \[
    U \psi = \bigoplus_{\pi, \sigma} U_{\pi, \sigma} \psi
    \]
    where $U_{\pi, \sigma}\psi$ is the part of $U\psi$ that corresponds to the irreducible representations $(\pi, \sigma)$ in the decomposition of $\tau$. Using that $(X, Y, \psi)$ is $\varepsilon$-optimal, we can show that $\|U_{\pi, \sigma}\psi\|^2 \leq O(\varepsilon)$, which in turn allows us to define a state that is $O(\sqrt{\varepsilon})$-close to $U\psi$ and acts as the zero vector on the non-optimal irreducible representations in $\tau$. Normalizing this vector then gives the state of the desired form, for which the inequalities \eqref{eq:thm:1}-\eqref{eq:thm:3} hold.
\end{proof}
It is in principle possible to derive the exact constants for both Lemma~\ref{lem:eps_psi_rep} and Theorem~\ref{thm:robust}. They depend on the smallest eigenvalue of $\hat Z$, the maximum eigenvalues of pairs of non-optimal irreducible representations $(\pi, \sigma)$, and on the second largest eigenvalue of the optimal pairs $(\pi_i, \sigma_i)$. However, in the proof of Lemma~\ref{lem:eps_psi_rep}, one would need to determine the exact decomposition of 
\[
 \gamma_1^{j_1}\gamma_2^{j_2} \gamma_3^{j_3}\gamma_4^{j_4} \gamma_1^{k_1}\gamma_2^{k_2} \gamma_3^{k_3}\gamma_4^{k_4}\otimes I \psi - \gamma_1^{j_1+k_1}\gamma_2^{j_2+k_2} \gamma_3^{j_3+k_3}\gamma_4^{j_4+k_4+j_2k_1}\otimes I \psi
\]
in terms of the polynomials 
\[
T_\pi^{\sf T}\begin{pmatrix}
    I \\ -\sqrt{3/4}\mathrm{i}I
\end{pmatrix}v_{\pi, j}(X,Y) \psi
\]
and the generators of the ideal $\mathcal I$.
We reduce the polynomial using a Gr\"obner basis generated by these polynomials to check that they are approximately zero, making it difficult to keep track of the exact error terms. However, since none of these steps depends on $\varepsilon$, this does not influence the bound $O(\sqrt{\varepsilon})$.

\ifx\compareversion\currentversion
\section*{Discussion}\label{sec:concl}
\else
\section{Discussion}\label{sec:concl}
\fi

In this work we provided an exact analysis of the CHSH mod $3$ Bell inequality. 
By combining symmetry reduction, high-precision semidefinite programming, and the rounding procedure for exact SDP solutions from \cite{cohn_optimality_2024}, we obtained an exact sum-of-Hermitian-squares certificate for the maximal quantum value and confirmed the optimality of the previously proposed strategy. 
Using this certificate, we characterized all optimal strategies and showed that the inequality admits,
 up to unitary transformations and symmetries,
 a unique irreducible strategy. There are $4$ symmetry-related optimal strategies that are not unitarily equivalent, which all use a maximally entangled state. We further established a robust version of this statement: an $\varepsilon$-optimal strategy is $O(\sqrt{\varepsilon})$-close to a direct sum of optimal irreducible strategies. 
 

Several directions for future work remain open. 
A natural question is whether similar techniques can be applied to the CHSH mod $d$ inequalities for larger values of $d$. 
While the present work provides an exact analysis for the case $d=3$, the resulting sum-of-Hermitian-squares certificate is already quite involved, and its structure does not clearly suggest a general pattern that could be extended to arbitrary $d$. 
Understanding whether a more systematic structure exists for these certificates would be an important step toward analyzing higher-dimensional variants. 
Another promising direction concerns further applications of the exact rounding procedure used in this work. 
In principle, the same approach could be applied to other Bell inequalities whose optimal values are currently known only numerically through semidefinite programming relaxations. 
In particular, inequalities that are solved at the second level of the hierarchy in previous numerical studies \cite[Tables~1-3]{hrga_certifying_2024} may be good candidates: if sufficiently high-precision solutions can be obtained and the associated algebraic number fields have manageable degree, the rounding procedure may allow one to recover exact optimality certificates. 

\ifx\compareversion\currentversion
\section*{Methods}
\else
\section{Methods}\label{sec:methods}
\fi
In this section we consider methods to reduce the semidefinite program \eqref{eq:general_sdp} in size. First we give our choice of border vectors $v_n$ that lead to a hierarchy of semidefinite programs, based on so-called SOS conditional expectations. Then we use symmetry reduction techniques to block-diagonalize the positive semidefinite matrix variables. Finally, we give a transformation to a real semidefinite program and a transformation to make the semidefinite programs for CHSH mod $3$ rational.

\ifx\compareversion\currentversion
\subsection*{SOS conditional expectations}
\else
\subsection{SOS conditional expectations}\label{sec:SOS_cond_exp}
\fi
Recall that the variables $X_i, Y_j$ form a group modulo the ideal $\mathcal I$. 
Using SOS conditional expectations (see, e.g., \cite[Section~3.5]{hrga_certifying_2024}), one can show that if $p$ is a sum of squares in a group algebra, then there exists a sum of squares where the polynomials involved are polynomials using the support of $p$, rather than just any polynomials in the variables $X_i$ and $Y_j$ \cite[Proposition~3.9]{hrga_certifying_2024}.
That is, instead of a basis of $\C\langle X, Y\rangle_n/\mathcal I$, we may take the border vector $v_n$ to contain a basis of the polynomials of degree $n$ in the words in the support of $p_d -\lambda$, modulo $\mathcal I$. To further reduce the size of the vector $v_n$, we take words of degree $n$ in $X_i^kY_j^k$ with $k \leq \frac{d-1}{2}$ for $d$ odd. Then the support of $p_d$ is contained in $v_1 \cup v_1^*$, rather than in $v$. 

In general, this gives polynomials of higher degree in the original variables $X_i$ and $Y_j$ at a fixed level of the hierarchy, and does not directly correspond to a level of the standard NPA hierarchy unless the polynomial has degree $1$ and the support contains all words of degree $1$. 

\begin{remark}
    Using SOS conditional expectations, it is easy to show that the semidefinite program \eqref{eq:general_sdp} has a strictly feasible point (that is, Slater's condition is satisfied). This implies that the primal and dual semidefinite program have the same optimal objective function value, and that the minimum is attained. 
    This is essentially Corollary~3.5 from \cite{hrga_certifying_2024}.
    
    Let $G_{p_d}$ be a Hermitian matrix (not necessarily positive semidefinite) such that $p_d = -v^*G_{p_d}v \mod \mathcal I$, and take $Z = G_{p_d} + MI \succ 0$, where $M$ is a large enough constant. Let $N$ be the length of the border vector $v$, then $v^*Iv = N \mod \mathcal I$ (recall that $X^{*} = X^{-1}$ for each variable $X$), so $(\lambda=MN, Z)$ is a strictly feasible solution. 
\end{remark}
\ifx\compareversion\currentversion
\subsection*{Symmetry reduction}
\else
\subsection{Symmetry reduction}
\fi
A second size reduction comes from the symmetry of the polynomial $p_d$. These symmetries allow us to block-diagonalize the Hermitian positive semidefinite variable, and to use one constraint per basis polynomial of the space of invariants rather than one constraint per basis polynomial of the full polynomial space. 


To simplify the notation, set $V = \Span\{v_n\} \subseteq \C\langle X, Y \rangle_{n'} / \mathcal{I}$, the polynomial space the polynomials $g_j$ from our sum-of-squares decomposition lie in. Here $n$ denotes the level of our hierarchy and $n'$ is the maximum degree of a polynomial in $v_n$. 

Let $\Gamma$ be a finite group acting linearly on $\C^{2d}$, 
and let $L:\Gamma \to \GL{V}$ be the representation of $\Gamma$ on $V$ given by $L(\gamma)p(X, Y) = p(\gamma^{-1}(X, Y))$ for all $\gamma \in \Gamma$. 
In particular, we require that $V$ is $\Gamma$-invariant, which is the case with our choice of $v_n$. We wish to parameterize the $\Gamma$-invariant sum-of-squares polynomials, to find a decomposition of the $\Gamma$-invariant polynomial $p_d-\lambda$, where $\Gamma$ is the group generated by the symmetries of the polynomial $p_d$ generated by \eqref{eq:sym1}-\eqref{eq:sym3}.
Note that we do not use the symmetries generated by \eqref{eq:sym4}, since those actions change the degree of a word. This would in particular imply that the action of $\Gamma$ is not induced by an action of $\Gamma$ on $\C^{2d}$.

For the following, all that is required of $L$ and $V$ is that $(L, V)$ is a finite-dimensional representation of $\Gamma$. 

Denote by $\hat \Gamma$ the set of irreducible representations of $\Gamma$, and let $\{e_{\pi, i, j}: \pi \in \hat G, i=1, \dots, m_\pi, j=1, \dots, d_\pi\}$ be a symmetry adapted basis of $V$, where $m_\pi$ is the multiplicity of the irreducible representation $\pi$ in $L$ and $d_\pi$ is the dimension of $\pi$. That is, 
the spaces $H_{\pi,i} = \Span\{e_{\pi,i,j} : j=1, \dots, d_\pi\}$ are irreducible representations of $\Gamma$ such that $H_{\pi, i}$ is equivalent to $H_{\pi', i'}$ if and only if $\pi$ is equivalent to $\pi'$, and for each $\pi, i, i'$ there are $\Gamma$-equivariant isomorphisms $T_{\pi,i,i'}: H_{\pi,i} \to H_{\pi, i}$ such that $T_{\pi, i, i'}e_{\pi,i,j} = e_{\pi, i', j}$. Expressed in this basis the representation $L$ decomposes as
\[
L(\gamma) = \bigoplus_{\pi \in \hat \Gamma} I_{m_\pi} \otimes \pi(\gamma).
\]
\begin{proposition}
    If $p = \sum_i g_j^*g_j$ with $g_j \in V$ is $G$-invariant, then
\[
p = \sum_{\pi \in \hat \Gamma} \sum_{i, i'=1}^{m_\pi} Z_{i,i'}^{\pi} \sum_{j=1}^{d_{\pi}} e_{\pi,i,j}^* e_{\pi,i', j},
\]
where the matrices $Z^{\pi}$ are Hermitian positive semidefinite.
\end{proposition}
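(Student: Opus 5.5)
Write $p = v^* Z v$ with $v = (g_1,\dots)$ expressed in the symmetry adapted basis, i.e.\ $p = e^* Z e$ with $Z \succeq 0$ a Hermitian matrix indexed by the basis $\{e_{\pi,i,j}\}$. (Expanding each $g_j$ in this basis gives $Z = \sum_j c_j c_j^*$, where $c_j$ is the coefficient vector of $g_j$.) The plan is to symmetrize $Z$ and then apply Schur's lemma. This is the standard argument from invariant SDPs (Gatermann--Parrilo).

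\textbf{Step 1: symmetrize.} Since $p$ is $\Gamma$-invariant, $p = \frac{1}{|\Gamma|}\sum_{\gamma} L(\gamma)p$. Acting by $\gamma$ on $p = e^*Ze$ gives $L(\gamma)p = (L(\gamma)e)^* Z (L(\gamma)e)$. Writing $L(\gamma)e = \rho(\gamma)^{\sf T} e$ for the matrix $\rho(\gamma)$ of $L(\gamma)$ in this basis, we get
\[
p = e^* \bar Z e, \qquad \bar Z = \frac{1}{|\Gamma|}\sum_{\gamma} \overline{\rho(\gamma)}\, Z\, \rho(\gamma)^{\sf T}.
\]
Some care is needed with the conventions: we need $L(\gamma)(a^*b) = (L(\gamma)a)^*(L(\gamma)b)$. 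This holds because $\Gamma$ acts by substitutions compatible with the involution. Indeed, the generators \eqref{eq:sym1}--\eqref{eq:sym3} send unitaries to unitaries and scalars $\omega^k$ to $\omega^k$, and they respect $X^*=X^{-1}$ modulo $\mathcal I$. The matrix $\bar Z$ is again positive semidefinite, being an average of congruences of $Z$. Since $\rho$ can be taken unitary by choosing the basis orthonormal for an invariant inner product, $\bar Z$ commutes with the representation, i.e.\ $\bar Z$ is $\Gamma$-equivariant.

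\textbf{Step 2: Schur's lemma.} In the symmetry adapted basis $\rho(\gamma) = \bigoplus_\pi I_{m_\pi}\otimes \pi(\gamma)$. An equivariant matrix then has the form $\bigoplus_\pi Z^\pi \otimes I_{d_\pi}$: blocks between non-equivalent irreducibles vanish, and blocks between $H_{\pi,i}$ and $H_{\pi,i'}$ are scalar multiples $Z^\pi_{i,i'}$ of the identification $T_{\pi,i,i'}$. This last point uses that the $e_{\pi,i,j}$ were chosen so that the equivariant isomorphisms send $e_{\pi,i,j}$ to $e_{\pi,i',j}$. Each $Z^\pi$ is a principal submatrix of $\bar Z$ (take the indices $(\pi,\cdot,j)$ for a fixed $j$), hence Hermitian positive semidefinite. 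Substituting into $e^*\bar Z e$ gives exactly $\sum_\pi\sum_{i,i'} Z^\pi_{i,i'}\sum_j e_{\pi,i,j}^* e_{\pi,i',j}$.

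\textbf{Main obstacle.} The main obstacle is bookkeeping in Step 1: making the action on $V$ and its interaction with $*$ precise, and ensuring the matrix of $L(\gamma)$ in the adapted basis is unitary. If $\pi$ is merely equivalent to a unitary representation, first replace the adapted basis by one orthonormal for the averaged inner product. Here the identifications $T_{\pi,i,i'}$ must be chosen unitary, which is possible since unitary equivalent irreducibles are unitarily equivalent. Complex versus real irreducibles do not matter here, since everything is over $\C$.
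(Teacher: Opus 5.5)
Your proposal is correct and follows essentially the same route as the paper. The paper says only that the proof carries over from the commutative case, and that standard argument is exactly your Steps 1 and 2: average the Gram matrix over $\Gamma$ via the Reynolds operator, then apply Schur's lemma in the symmetry-adapted basis.

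Two small precisions. First, with $\rho$ unitary, your averaged $\bar Z$ a priori commutes with $\overline{\rho(\gamma)}=\bigoplus_\pi I_{m_\pi}\otimes\overline{\pi(\gamma)}$ rather than with $\rho(\gamma)$; this is harmless, because both commutants equal $\bigoplus_\pi M_{m_\pi}(\C)\otimes I_{d_\pi}$. Second, unitarity of the given matrices $\pi(\gamma)$ is an implicit hypothesis of the statement as written, and it holds for the paper's monomial irreducibles. It is not something you can arrange by re-choosing the adapted basis, since the conclusion refers to the given $e_{\pi,i,j}$.
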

The proof directly translates from the commutative case (which can be found, for example, in \cite[Proposition~4.1]{leijenhorst_solving_2024-1}).

Such a symmetry adapted basis can for example be generated using the projection algorithm in \cite{serre_linear_1996}: Define the operators
\[
p_{jj'}^{(\pi)} = \frac{d_\pi}{|\Gamma|}\sum_{\gamma \in \Gamma} \pi(\gamma^{-1})_{j,j'} L(\gamma), 
\]
and choose bases $\{e_{\pi,i,1}\}$ of the image $\mathrm{Im}\left(p_{11}^{(\pi)}\right)$ of $p_{11}^{\pi}$. 
Then set $e_{\pi,i,j} = p^{(\pi)}_{j1}e_{\pi,i,1}$. 

The irreducible representations of the group we use for the symmetry reduction are constructed in \ifx\compareversion\currentversion the Supplementary material. \else Appendix~\ref{sec:irreps} \fi

\ifx\compareversion\currentversion
\subsection*{Complex to real semidefinite programs}
\else
\subsection{Complex to real semidefinite programs}
\fi
After symmetry reduction, the semidefinite program \eqref{eq:general_sdp} is complex 
with both complex constraint matrices and a complex Hermitian positive semidefinite variable matrix. To obtain a real semidefinite program, we use \cite{wang_more_2023}. 
The semidefinite program is of the form
\begin{align*}
    & \min && \lambda, \\
    & \text{subject to} && \sum_{\pi \in \hat \Gamma} \langle C^{\pi, \re}_{u} - \mathrm{i} C^{\pi,\im}_u, Z^\pi \rangle = (\lambda - p_d^{\re} - \mathrm{i}p_d^{\im})_u, && \forall u \text{ word}, \\
    &&& Z^\pi \succeq 0, && \forall \pi \in \hat G
\end{align*}
where $C^{\pi,\re}$ and $C^{\pi,\im}$ are the real and imaginary parts of the matrix  $C^{\pi} = (\sum_j e_{\pi,i,j}^*e_{\pi,i',j} \mod \mathcal I)_{i,i'}$,  and $p_u$ is the coefficient of a polynomial $p$ corresponding to a word $u$. We assume that $p$ and the entries of $C^{\pi}$ are in normal form, i.e., reduced with respect to a Gr\"obner basis of $\mathcal I$. The inner product of two complex matrices is given by $\langle A, B\rangle = \Tr(A^*B)$.
Then the real reformulation is given by
\begin{align*}
    & \min &&\lambda, \\
    & \text{subject to} && \sum_{\pi \in \hat \Gamma}\left\langle \begin{pmatrix}
        C^{\pi, \re}_u & C^{\pi,\im}_u \\
        -C^{\pi,\im}_u & C^{\pi,\re}_u
    \end{pmatrix}, Z^\pi \right\rangle = (\lambda - p_d^{\re})_u, &&\forall u \text{ word}\\
    &&& \sum_{\pi \in \hat \Gamma}\left\langle \begin{pmatrix}
        C^{\pi,\im}_u & -C^{\pi,\re}_u \\
        C^{\pi,\re}_u & C^{\pi,\im}_u
    \end{pmatrix}, Z^\pi \right\rangle = (-p_d^{\im})_u, && \forall u \text{ word}\\
    &&& Z^\pi = \begin{pmatrix}
        Z_1^\pi & (Z_2^\pi)^{\sf T} \\
        Z_2^\pi & Z_3^\pi
    \end{pmatrix} \succeq 0, && \forall \pi \in \hat \Gamma.
\end{align*}
Note in particular that there are no additional constraints on the entries of the matrix $Z^{\pi}$, such as $Z^{\pi}_1 = Z^{\pi}_3$. Given a solution $\{Z^\pi\}_{\pi}$ to the real semidefinite program, the matrices 
\[
(Z_1^{\pi} + Z^{\pi}_3) + \mathrm{i}(Z_2^{\pi} - (Z_2^{\pi})^{\sf T}) = \begin{pmatrix}
    I &\mathrm{i}I
\end{pmatrix} Z^{\pi} \begin{pmatrix}
    I \\ -\mathrm{i}I
\end{pmatrix}
\]
are a solution to the complex semidefinite program.

\ifx\compareversion\currentversion
\subsection*{Rounding and computations}\label{sec:rounding}
\else
\subsection{Rounding and computations}\label{sec:rounding}
\fi
To find an exact solution to the semidefinite program, we use the rounding procedure of \cite{cohn_optimality_2024}. This procedure gives (heuristically) an exact solution to a semidefinite program, given a sufficiently precise approximation of an optimal solution. Typically, if the exact solution is feasible and the numerical solution was (numerically) optimal, the returned solution will be optimal. However, the algorithm does not guarantee optimality. 

For the rounding procedure, one needs to give an algebraic number field such that the semidefinite program is defined over this number field and there is an optimal solution with entries in this number field. Cohn, de Laat and Leijenhorst 
provide also a heuristic in \cite{cohn_optimality_2024} to find an algebraic number field over which the optimal solution seems to be defined, but in our case, the semidefinite program is defined over a different number field. Instead of using the larger field that encompasses both number fields, we use a method to obtain a rational semidefinite program for $d=3$.

The basis elements $e_{\pi, i,j}$ have coefficients of the form $\sum_{i=0}^{d-1} c_i \omega^i$ with $c_i \in \Q$, where $\omega$ is a $d$-th root of unity, due to the irreducible representations defined in the Supplementary material. For $d = 3$, this means that the real parts of the basis elements are rational, and the imaginary parts are of the form $q\sqrt{3/4}$ with $q \in \Q$. This allows us to transform the semidefinite program to a rational semidefinite program by multiplying the matrices $Z^\pi$ from both sides by the matrices
\[
\begin{pmatrix}
    I & 0 \\
    0 & \sqrt{3/4}I
\end{pmatrix},
\]
where the identity is of the same size as the blocks $Z_i^\pi$.
Then the constraints corresponding to the real parts become rational, and the constraints corresponding to the imaginary parts will be rational after dividing by $\sqrt{3/4}$. 

If $(\{Z^\pi\}_\pi, \lambda)$ is a solution to the semidefinite program after scaling, we have
\begin{equation}
    \label{eq:SDP_constraints_final}
\sum_{\pi \in \hat \Gamma} \sum_{j=1}^{d_\pi} e_{\pi,j}^*\begin{pmatrix}
    I & \sqrt{3/4}\mathrm{i}I
\end{pmatrix} Z^\pi\begin{pmatrix}
    I \\ -\sqrt{3/4}\mathrm{i} I
\end{pmatrix} e_{\pi,j} = p_d - \lambda \mod \mathcal I
\end{equation}
where $e_{\pi, j}$ is the vector with entries $e_{\pi,i,j}$.

We implement the semidefinite program in Julia~\cite{bezanson_julia_2017}, using the high-precision solver ClusteredLowRankSolver.jl \cite{leijenhorst_solving_2024-1} and the computer algebra systems Nemo.jl and Hecke.jl \cite{fieker_nemohecke_2017}. 
Due to the reductions, the computations for the second level ($n=2$) of this hierarchy for $d = 3$  only take a few minutes even with $256$ bits of precision on a typical laptop.

\section*{Data availability}
The data generated for this paper is available at \cite{code_chshmod3}.

\section*{Code availability}
The code used in this paper is available at \cite{code_chshmod3}.

\section*{Acknowledgments} 
This work has been supported by European Union’s HORIZON-MSCA-2023-DN-JD programme under the Horizon Europe (HORIZON) Marie Skłodowska-Curie Actions, grant agreement 101120296 (TENORS), the project COMPUTE, funded within the QuantERA II Programme that has received funding from the EU's H2020 research and innovation programme under the GA No 101017733 \euflag. 
Initial computation has been performed using HPC resources from CALMIP (Grant 2023-P23035). 
IK also acknowledges support of the Slovenian Research Agency program P1-0222 and grants J1-50002, N1-0217, J1-60011, J1-50001, J1-3004 and J1-60025. 
Partially supported by the Fondation de l’\'Ecole polytechnique
as part of the Gaspard Monge Visiting Professor Program. IK  thanks \'Ecole polytechnique and Inria Paris Saclay
for hospitality during the preparation of this manuscript.

\section*{Author contributions}
I.K., N.L., and V.M. conceived the idea and prepared the paper. 
N.L. designed the code and the proofs. 

\section*{Competing interests}
The authors declare no competing interests. 

\appendix
\ifx\compareversion\currentversion
\section*{Supplementary material} 
\subsection*{Irreducible representations}\label{sec:irreps}
\else
\section{Irreducible representations}\label{sec:irreps}
\fi


Let $C_d$ be the cyclic group of order $d$. The polynomial $p_d$ is invariant under the symmetries listed in the main text. The symmetries that do not change the degree of words form the group $\Gamma =  (C_d \times C_d) \rtimes (C_2 \times C_2)$, where the first part comes from raising an index in $X$ and multiplying $Y_j$ by $\omega^j$ and vice versa, and the second part comes from the actions of interchanging $X$ and $Y$ and from negating the indices modulo $d$. 
Since inverting the matrices changes the degree of a word, we do not include that in the symmetries of $p_d$ used for the symmetry reduction. We build the irreducible representations of $\Gamma$ from the irreducible representations of $C_2$ and $C_d$ using the representation theory of finite groups \cite{serre_linear_1996}. 

Let $k \in \{0, \dots, j-1\}$, and let $\xi_j$ be a generator of $C_j$.
The irreducible representations are fully determined by their value on $\xi_j$. The group $C_j$ is abelian, so all irreducible representations are $1$-dimensional. Furthermore, for every representation $\pi$ we have $\pi(\xi_j)^j = \pi(\xi_j^j) = \pi(e) = 1$,
so $\pi(\xi_j)$ is a $j$-th root of unity. This gives the representations
\[
\pi^k(\xi_j) = \omega^k,
\]
where $\omega = \exp(2\pi \mathrm{i}/j)$. 
This gives $j = |C_j|$ non-isomorphic irreducible 1-dimensional representations, so these are all irreducible representations of $C_j$ by \cite[Corollary~2 of Proposition~5]{serre_linear_1996}. We will use $\xi_j$ for the generator of $C_j$, and $\alpha$ and $\zeta$ for general group elements.

The irreducible representations of the direct product of two groups $G_1$ and $G_2$ can be constructed from the irreducible representations of the groups themselves, using the tensor product. The tensor product of two representations $\sigma_1$ and $\sigma_2$ is defined by
\[
(\sigma_1 \otimes \sigma_2)((\zeta_1, \zeta_2)) = \sigma_1(\zeta_1) \otimes \sigma_2(\zeta_2)
\]
for $(\zeta_1, \zeta_2) \in G_1 \times G_2$.
\begin{theorem}[\protect{\cite[Theorem~10]{serre_linear_1996}}]
    If $\sigma_i$ is an irreducible representation of $G_i$ for $i=1, 2$, then $\sigma_1 \otimes \sigma_2$ is an irreducible representation of $G_1 \times G_2$. Moreover, every irreducible representation of $G_1 \times G_2$ is isomorphic to a representation $\sigma_1 \otimes \sigma_2$ where $\sigma_i$ is an irreducible representation of $G_i$.
\end{theorem}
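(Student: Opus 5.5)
The statement is Serre's Theorem~10: irreducible representations of $G_1\times G_2$ are exactly the tensor products of irreducible representations of the factors. The plan is to use character theory over $\C$ for finite groups, relying on two standard facts. First, a representation $\sigma$ is irreducible iff $\langle \chi_\sigma,\chi_\sigma\rangle=1$, where $\langle \chi,\chi'\rangle=\frac{1}{|G|}\sum_{g}\overline{\chi(g)}\chi'(g)$. Second, $\sum_{\pi\in\hat G} d_\pi^2=|G|$.

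\textbf{Irreducibility.} Take $\sigma_i$ irreducible for $G_i$ with characters $\chi_i$. The character of $\sigma_1\otimes\sigma_2$ at $(\zeta_1,\zeta_2)$ is $\Tr(\sigma_1(\zeta_1)\otimes\sigma_2(\zeta_2))=\chi_1(\zeta_1)\chi_2(\zeta_2)$. Since $|G_1\times G_2|=|G_1||G_2|$, the inner-product sum factorizes:
\[
\langle\chi_1\chi_2,\chi_1\chi_2\rangle_{G_1\times G_2}=\langle\chi_1,\chi_1\rangle_{G_1}\,\langle\chi_2,\chi_2\rangle_{G_2}=1\cdot 1=1 .
\]
Hence $\sigma_1\otimes\sigma_2$ is irreducible.

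\textbf{Pairwise non-isomorphic.} The same factorization gives $\langle\chi_1\chi_2,\chi_1'\chi_2'\rangle=\langle\chi_1,\chi_1'\rangle\langle\chi_2,\chi_2'\rangle$. This is $0$ unless $\sigma_1\simeq\sigma_1'$ and $\sigma_2\simeq\sigma_2'$. Characters determine representations up to isomorphism, so distinct pairs of irreducibles give non-isomorphic tensor products.

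\textbf{Completeness.} Summing squared dimensions over all pairs gives
\[
\sum_{\sigma_1,\sigma_2}(d_{\sigma_1}d_{\sigma_2})^2=\Big(\sum_{\sigma_1} d_{\sigma_1}^2\Big)\Big(\sum_{\sigma_2} d_{\sigma_2}^2\Big)=|G_1||G_2|=|G_1\times G_2| .
\]
The list of products is pairwise non-isomorphic and already exhausts $|G_1\times G_2|$ in the sum of squared dimensions. Any further irreducible would push that sum beyond the group order, so every irreducible of $G_1\times G_2$ is isomorphic to some $\sigma_1\otimes\sigma_2$.

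The only delicate point is relying on the standard character facts: irreducibility $\iff$ norm one, characters determine isomorphism class, and $\sum_\pi d_\pi^2=|G|$. These are all classical results from \cite{serre_linear_1996}, so the proof contains no real obstacle.
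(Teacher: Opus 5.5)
Your argument is correct. The paper gives no proof of its own here, only the citation to Serre. Serre's proof of the first part is the same as yours: multiply the identities $\frac{1}{|G_i|}\sum|\chi_i|^2 = 1$ to get norm one for $\chi_1\chi_2$.

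For the second part you take a different route. You first show that the products $\sigma_1\otimes\sigma_2$ are pairwise non-isomorphic, and then conclude by counting with $\sum_\pi d_\pi^2=|G|$. Serre instead shows directly that a class function $f$ on $G_1\times G_2$ orthogonal to every $\chi_1(\zeta_1)\chi_2(\zeta_2)$ must vanish. He fixes $\chi_2$ and observes that $\zeta_1\mapsto\sum_{\zeta_2}f(\zeta_1,\zeta_2)\overline{\chi_2(\zeta_2)}$ is a class function orthogonal to all $\chi_1$, hence zero, and then repeats the argument in the second variable. Consequently an irreducible representation not isomorphic to any product would have character orthogonal to all $\chi_1\chi_2$, hence zero, which is impossible.

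Serre's version relies on the completeness theorem (irreducible characters span the class functions) and needs no non-isomorphism check. Yours needs only the orthogonality relations and the regular-representation count, at the cost of that extra step.

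One small correction to that step: the fact you actually use is the trivial direction, that isomorphic representations have equal characters, together with orthogonality of characters of non-isomorphic irreducibles. You do not need the converse that characters determine the isomorphism class.
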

%

The irreducible representations of the semidirect product are more complicated. Since the normal subgroup in the relevant semidirect product is abelian, it is possible to describe the irreducible representations using \cite[Section~8.2]{serre_linear_1996}. In the following, we do this for the group $\Gamma = A \rtimes H$, where $A = C_d \times C_d$ and $H = C_2 \times C_2$. We denote the irreducible representations of $A$ by $\pi^{i,j} = \pi^i \otimes \pi^j$ with $i, j=0, \dots, d-1$.  Since $A$ is abelian, these representations form a group $X = \Hom(A, \C^*)$. The product of two representations $\pi^{i,j}$ and $\pi^{k,l}$ in this group is given by
\[
(\pi^{i,j}\pi^{k,l})(\xi_d^{a_1}, \xi_d^{a_2}) = \pi^{i,j}(\xi_d^{a_1},\xi_d^{a_2}) \pi^{k,l}(\xi_d^{a_1},\xi_d^{a_2}) = \omega^{a_1(i+k)+a_2(j+l)} = \pi^{i+k,j+l}(\xi_d^{a_1}, \xi_d^{a_2}),
\]
where the indices of the representations are taken modulo $d$. The group $\Gamma$ acts on $X$ by
\[
\zeta\pi(\alpha) = \pi(\zeta^{-1} \alpha \zeta) 
\]
for $\zeta \in \Gamma, \pi \in X$ and $\alpha \in A$. 
Since $A$ is abelian, we only need to consider $\zeta \in H$.
Recall that the first group $C_2$ of the direct product interchanges the noncommutative variables $X_i$ and $Y_i$,
while the second group inverses the indices mod $d$. Then we have
\[
(\xi_2, e)\pi^{i,j}((\zeta_1, \zeta_2)) = \pi^{i,j}((\zeta_2, \zeta_1)) = \pi^{j,i}((\zeta_1, \zeta_2))
\]
and
\[
(e, \xi_2) \pi^{i,j}((\zeta_1, \zeta_2)) = \pi^{i,j}((\zeta_1^{-1}, \zeta_2^{-1})) = \pi^{d-i,d-j}((\zeta_1, \zeta_2))
\]
for $\zeta_1, \zeta_2 \in C_d$.

Let $\{\pi^{i,j}\}$ be a set of representatives of the orbits of $X/H$. Let $H_{i,j}$ be the subgroup of $H$ consisting of all elements of $H$ that fix $\pi^{i,j}$, and consider the corresponding subgroups $\Gamma_{ij} = A H_{ij}$. We extend $\pi^{i,j}$ to $\Gamma_{ij}$ by setting
\[
\pi^{i,j}(\alpha\zeta) = \pi^{i,j}(\alpha)
\]
for $\alpha \in A$ and $\zeta \in H_{ij}$. In our case, an orbit consists of the representations with indices $\{(i,j), (j,i), (d-i, d-j), (d-j, d-i)\}$. The groups $H_{ij}$ are given by
\[
H_{ij} = \begin{cases}
    \{(e,e)\} & \text{ if } i \neq j, \\
    C_2 \times \{e\} & \text{ if } i = j \text{ and } i, j \neq 0, \\
    \{(e,e), (\xi_2, \xi_2)\} & \text{ if } i+j = 0 \mod d \text{ and } i, j \neq 0, \\
    C_2 \times C_2 & \text{ if } i = j = 0.
\end{cases}
\]

Let $\rho$ be an irreducible representation of $H_{ij}$; this gives an irreducible representation on $\Gamma_{ij}$ by composition with the canonical projection $\Gamma_{ij} \to H_{ij}$.  Take $\theta_{ij, \rho}$ to be the representation induced by the tensor product $\pi^{i,j} \otimes \rho$. 
\begin{proposition}[\protect{\cite[Proposition~25]{serre_linear_1996}}]
    The representation $\theta_{ij,\rho}$ is irreducible. Moreover, each irreducible representation of $\Gamma$ is isomorphic to one of the $\theta_{ij, \rho}$, and if $\theta_{ij, \rho}$ and $\theta_{i'j', \rho'}$ are isomorphic, then $i=i'$, $j=j'$, and $\rho$ is isomorphic to $\rho'$.
\end{proposition}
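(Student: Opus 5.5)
This is the Mackey "little group" method for a semidirect product $A\rtimes H$ with $A$ abelian, which is exactly Serre's Proposition~25 (\S 8.2). The plan is to reproduce that argument in the present notation, with $\Gamma=A\rtimes H$, $A=C_d\times C_d$, $H=C_2\times C_2$. There are three claims: the induced representations $\theta_{ij,\rho}=\mathrm{Ind}_{\Gamma_{ij}}^{\Gamma}(\pi^{i,j}\otimes\rho)$ are irreducible, they are pairwise non-isomorphic, and they exhaust $\hat\Gamma$.

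First I check that $\pi^{i,j}\otimes\rho$ is a genuine representation of $\Gamma_{ij}=AH_{ij}$. Write elements as $\alpha\zeta$ with $\alpha\in A$, $\zeta\in H_{ij}$. The extension $\pi^{i,j}(\alpha\zeta)=\pi^{i,j}(\alpha)$ is a homomorphism precisely because $\zeta$ fixes $\pi^{i,j}$. Indeed,
$$\pi^{i,j}(\alpha\zeta\alpha'\zeta')=\pi^{i,j}\bigl(\alpha\,\zeta\alpha'\zeta^{-1}\bigr)=\pi^{i,j}(\alpha)\pi^{i,j}(\alpha'),$$
where the last step uses $\zeta\pi^{i,j}=\pi^{i,j}$.

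For irreducibility I use Mackey's criterion. Write $W=\pi^{i,j}\otimes\rho$. For $s\in H\setminus H_{ij}$, let $W^s$ be the conjugate representation of $\Gamma_{ij}\cap s\Gamma_{ij}s^{-1}\supseteq A$. Restricted to $A$, $W$ is isotypic of type $\pi^{i,j}$ and $W^s$ is isotypic of type $s\pi^{i,j}\neq\pi^{i,j}$. These two restrictions to $A$ are therefore disjoint, so $W$ and $W^s$ are disjoint on the intersection. The double cosets $\Gamma_{ij}\backslash\Gamma/\Gamma_{ij}$ are represented by $H_{ij}\backslash H$, so this covers every $s$ outside the trivial double coset. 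On the trivial double coset $W$ itself is irreducible, since $\rho$ is irreducible on $H_{ij}$ and $A$ acts by scalars. Mackey's criterion then gives that $\theta_{ij,\rho}$ is irreducible.

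Next, I recover $(i,j)$ and $\rho$ from $\theta=\theta_{ij,\rho}$. Restricting $\theta$ to $A$ gives, by Mackey's restriction formula, $\bigoplus_{s\in H/H_{ij}} s\pi^{i,j}$ with multiplicity $\dim\rho$. Hence the $H$-orbit, and so the chosen representative $(i,j)$, is determined. The $\pi^{i,j}$-isotypic subspace $\theta^{i,j}$ of $\theta|_A$ is stable under $\Gamma_{ij}$, and as a $\Gamma_{ij}$-representation it is exactly $\pi^{i,j}\otimes\rho$. This determines $\rho$ up to isomorphism.

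For exhaustion, let $\theta$ be any irreducible representation of $\Gamma$. Decompose $\theta|_A$ into isotypic pieces $\theta=\bigoplus_\chi\theta_\chi$; $H$ permutes these pieces, and $\Gamma$ acts irreducibly. So the characters $\chi$ occurring form a single $H$-orbit; choose its representative $\pi^{i,j}$. The piece $\theta_{i,j}$ is $\Gamma_{ij}$-stable, and $A$ acts on it by the scalar character $\pi^{i,j}$. Therefore $\theta_{i,j}\otimes(\pi^{i,j})^{-1}$ is trivial on $A$, i.e.\ it is a representation $\rho$ of $H_{ij}$, and $\theta_{i,j}=\pi^{i,j}\otimes\rho$. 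Since $\theta=\bigoplus_{s\in H/H_{ij}} s\,\theta_{i,j}$, we get $\theta\cong\mathrm{Ind}\,\theta_{i,j}$. Irreducibility of $\theta$ forces $\rho$ to be irreducible, since otherwise the induced representation would split.

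Alternatively, the counting identity $\sum(\dim\theta)^2=|\Gamma|$ can replace this exhaustion step, computed with the explicit orbits and stabilizers $H_{ij}$ listed above. The main obstacle is the careful double-coset bookkeeping in the irreducibility step, and in particular confirming that the stabilizers $H_{ij}$ are as listed (for instance $(\xi_2,\xi_2)$ fixing $\pi^{i,-i}$). Everything else is standard and can simply be cited from Serre.
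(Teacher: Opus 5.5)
Your proposal is correct and is essentially the argument the paper relies on: the paper gives no proof of its own and simply cites Serre's Proposition~25. Serre's proof also proceeds by Mackey's criterion checked on restrictions to $A$ (the $\pi^{i,j}$-isotypic representation versus its $s$-conjugate), recovers $(i,j)$ and $\rho$ from the isotypic decomposition of $\theta|_A$, and obtains exhaustion from the $\Gamma_{ij}$-stable $\pi^{i,j}$-isotypic piece. The only differences are cosmetic. In the exhaustion step Serre takes an irreducible $H_{ij}$-subrepresentation of that piece and applies Frobenius reciprocity, whereas you read off $\theta\cong\mathrm{Ind}\,\theta_{i,j}$ directly from $\theta=\bigoplus_{s\in H/H_{ij}}s\,\theta_{i,j}$. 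Also, the double cosets are in general indexed by $H_{ij}\backslash H/H_{ij}$ rather than $H_{ij}\backslash H$; these coincide here because $H=C_2\times C_2$ is abelian, and the point is harmless anyway since your disjointness argument works for every $s\in H\setminus H_{ij}$.
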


An induced representation is defined as follows. Let $H$ be a subgroup of $\Gamma$, and consider the left cosets $\zeta H = \{\zeta\alpha : \alpha \in H\}$ for $\zeta \in \Gamma$. Let $\zeta_1, \dots, \zeta_m$ be representatives of the left cosets of $H$. Let $(\rho, V)$ be a representation of $H$. The induced representation of $(\rho, V)$ is the space $\bigoplus_{i=1}^m \zeta_i V$, where elements of $\zeta_i V$ are written as $\zeta_i v$ with $v \in V$. For each $\zeta_i$, and each $\zeta \in \Gamma$ there is a unique $\zeta_j$ and $\alpha_i \in H$ such that $\zeta\zeta_i = \zeta_j\alpha_i$. Since $\{\zeta_i\}_i$ is a full set of representatives of the left cosets, $j = j(i)$ is a permutation depending on $\zeta$. The action of the induced representation is then given by $\theta(\zeta) \sum_i \zeta_i v_i = \sum_i \zeta_{j(i)} \rho(\alpha_i)v_i$. 

As an example, we construct $\theta_{11, \pi^{1,0}}$. The representation $\pi^{1,1} \otimes \pi^{1,0}$ is given by
\[
\pi^{1,1} \otimes \pi^{1,0}((\xi_d^a, \xi_d^b, \xi_2^c, e)) = (-1)^c\omega^{a+b}.
\]
for $a, b\in \{0, \dots, d-1\}$ and $c\in \{0, 1\}$. The vector space is $1$-dimensional, and the representatives of the left cosets are given by $(e,e,e,\xi_2^{b_2})$. Hence the vector space for the induced representation is $2$-dimensional, where the first coordinate corresponds to $b_2 = 0$ and the second coordinate to $b_2 = 1$.

The product between a general group element  $(\xi_d^{a_1}, \xi_d^{a_2}, \xi_2^{b_1}, \xi_2^{b_2})$ and a left-coset representative $(e,e,e,\xi_2^c)$ is given by
\[
(\xi_d^{a_1}, \xi_d^{a_2}, \xi_2^{b_1}, \xi_2^{b_2})(e,e,e,\xi_2^c) = (\xi_d^{a_1}, \xi_d^{a_2}, \xi_2^{b_1}, \xi_2^{b_2+c}) = (e,e,e,\xi_2^{b_2+c}) (\xi_d^{d-a_1}, \xi_d^{d-a_2}, \xi_2^{b_1}, e).
\]
Hence $b_2 = 1$ interchanges the two subspaces, and on the second subspace the representation acts as $\pi^{11}\otimes \pi^{10}((\xi_d^{d-a_1}, \xi_d^{d-a_2}, \xi_2^{b_1}, e)) = \pi^{d-1,d-1} \otimes \pi^{10}((\xi_d^{a_1}, \xi_d^{a_2}, \xi_2^{b_1}, e))$.
That is, the induced representation is given by
\[
\theta_{11,\pi^{1,0}}((\xi_d^{a_1}, \xi_d^{a_2}, \xi_2^{b_1}, \xi_2^{b_2})) = \begin{pmatrix}
    (-1)^{b_1}\omega^{a_1+a_2} & 0 \\
    0 & (-1)^{b_1}\omega^{-a_1-a_2}
\end{pmatrix} \begin{pmatrix}0 & 1 \\ 1 & 0 \end{pmatrix}^{b_2}.
\]
This gives $k$-dimensional representations for the $H_{ij}$ corresponding to orbits of size $k$.

\ifx\compareversion\currentversion
\subsection*{Proof of Theorem~\ref{thm:equiv_flat_groebner}}
\else
\section{Proof of Theorem~\ref{thm:equiv_flat_groebner}}\label{sec:pf_thm_equiv}
\fi
\begin{theorem}[Restatement of Theorem~\ref{thm:equiv_flat_groebner}]
    Let $(\lambda, Z; M) \in \R \times \C^{N \times N} \times \C^{N \times N}$ be a primal-dual optimal solution with $\mathrm{rank} Z + \mathrm{rank} M = N$ and $\lambda = \langle G_p, M\rangle$. If $M$ is $\delta$-flat, then $H_{\mathcal J}$ is finite dimensional, and the representations defined in \ifx\compareversion\currentversion the sections on extraction through sum-of-squares certificates and flatness \else Sections~\ref{sec:grobner_optimizer} and \ref{sec:flatness} \fi are equivalent.
\end{theorem}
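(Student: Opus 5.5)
The plan is to compare the two constructions through the linear map $\Phi:\C\langle X\rangle\psi \to V$, $a\psi \mapsto w_a$, defined on words of degree at most $n$ and then extended. The aim is to show that $\Phi$ descends to a well-defined bijection $H_{\mathcal J}\to V$ intertwining $\rho_{\mathcal J}(X_i)$ with $\rho_M(X_i)$ and sending $\psi$ to $w_1$. Since both representations are unitary for suitable inner products, equivalence then also gives unitary equivalence.

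\emph{Step 1 (duality).} From $\lambda = \langle G_p, M\rangle$ and the constraint $\lambda - p = v^*Zv \bmod \mathcal I$, pairing with $M$ and using $M_{1,1}=1$ and the moment constraints gives $\langle Z, M\rangle = 0$. Since $Z, M\succeq 0$, this forces $ZM = 0$, so $\mathrm{col}(Z)\subseteq \ker M$. The dimension count $\rank Z + \rank M = N$ then upgrades this to $\mathrm{col}(Z) = \ker M$. With $Z = T\hat Z T^*$ and $\hat Z\succ 0$, we have $\mathrm{col}(Z)=\mathrm{col}(T)$. Hence the generators $T_i^*v\psi$ of $\mathcal J$ are exactly the elements $c^*v\psi$ with $c\in\ker M$.

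\emph{Step 2 ($\Phi$ is well defined and injective on degree $\le n$).} For a coefficient vector $c$, we have $\sum_a c_a w_a = R_n c$, and this vanishes iff $c^*M c = 0$ iff $c\in \ker M$. So the kernel of $a\psi\mapsto w_a$ on $\Span(v_n)\psi$ is precisely the span of the degree-$\le n$ generators of $\mathcal J$. I then need that $\mathcal J\cap \Span(v_n)\psi$ contains nothing more, i.e.\ that multiplying generators by words and reducing modulo $\mathcal I$ does not create new low-degree relations beyond $\ker M$. For this I will use flatness. Suppose $c\in\ker M$ and $X_i c$ (the shifted polynomial) still has degree $\le n$. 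Then $R_n(X_ic) = \rho_M(X_i) R_n c = 0$, so $X_i c \in \ker M$. Likewise, reduction by generators of $\mathcal I$ is compatible with the moment constraints, as in the computation showing $q(\rho(X))=0$.

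\emph{Step 3 (finite dimensionality and intertwining).} Flatness gives, for every word $a$ of degree $n-\delta+1,\dots,n$, a relation $w_a = \sum_{u\in U} \mu_u w_u$. By Step 2, this means $a\psi - \sum_u \mu_u u\psi\in\mathcal J$. Applying such relations inductively reduces every word applied to $\psi$ to the span of $U\psi$ modulo $\mathcal J$. So $\dim H_{\mathcal J}\le |U| = \dim V$, and $\Phi$ is surjective onto $V$. Injectivity comes from Step 2, which requires $\mathcal J$ to contain only relations that already hold in $V$. This in turn follows because the representation $(\rho_M, w_1)$ constructed in the flatness section is optimal, so it satisfies all $T_i^*v\psi=0$ by the SOS extraction argument, and therefore every element of $\mathcal J$ vanishes when evaluated there. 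Thus $\Phi$ is a bijection. It intertwines the two actions by construction, since $\Phi(X_i a\psi) = w_{X_ia} = \rho_M(X_i)w_a$.

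The main obstacle is Step 2/3: controlling the full two-sided ideal $\mathcal J$, whose elements may involve arbitrarily long words, against the finite-level data $\ker M$. I will handle it as described. The evaluation map $q\mapsto q(\rho_M(X))w_1$ kills $\mathcal J$, and this gives a surjection $H_{\mathcal J}\to V$. The flatness reduction then shows that $H_{\mathcal J}$ is spanned by $U\psi$, which has size $\dim V$. Together these give an isomorphism without needing to analyze $\mathcal J$ degree by degree.
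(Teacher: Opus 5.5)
Your proof is correct. Its first half is the paper's: complementary slackness plus $\rank Z+\rank M=N$ identifies the column space of $Z$ (equivalently of $T$) with $\ker M$. Hence each flatness relation $w_b=\sum_{a\in U}c_{a,b}w_a$ yields an element of $\mathcal J$, and inductive reduction of words shows that $\{a\psi : a\in U\}$ spans $H_{\mathcal J}$, which gives finite dimensionality.

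The two arguments differ at the injectivity step. The paper builds $T$ directly from the coefficients $c_{a,b}$ and asserts that $\{a\psi : a\in U\}$ is a basis of the quotient. It then reads off that $\rho_S(X_i)$ and $\rho_M(X_i)$ both have matrix $(c_{a,X_ib})_{a,b\in U}$. The linear independence of $U\psi$ modulo the whole two-sided ideal $\mathcal J$ is not actually argued there.

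Your evaluation map $\Theta:q\psi\mapsto q(\rho_M(X))w_1$ supplies this independence:
\begin{itemize}
\item $\Theta$ is well defined on $H_{\mathcal J}$, because $(\rho_M(X),w_1)$ attains $\lambda=\langle G_p,M\rangle$ and is therefore annihilated by $\mathcal J$ through the SOS-extraction argument.
\item $\Theta$ is onto $V$.
\item Since $\dim H_{\mathcal J}\le |U|=\dim V$, $\Theta$ is a bijection. It intertwines the two actions by construction and maps $\psi$ to $w_1$.
\end{itemize}
So you obtain an equivalence of strategies, not just of representations, with no degree-by-degree control of $\mathcal J$; your Step~2 can be dropped.

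One ingredient should be stated explicitly; the paper also uses it tacitly. This is the flat-extension compatibility $\rho_M(X_i)w_b=w_{X_ib}$ for all words $b$ of degree less than $n$, not only for $b\in U$. It is what gives $a(\rho_M(X))w_1=w_a$, and hence surjectivity. It follows from the moment constraints and unitarity, since $\|\sum_b c_bw_{X_ib}\|=\|\sum_b c_bw_b\|$.

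There is also a harmless bookkeeping point. With $L(v^*Zv)=\sum_{a,b}Z_{a,b}M_{a,b}$, slackness gives $\overline{Z}M=0$. So it is $\overline{T_i}$, the coefficient vector of $T_i^*v$, that lies in $\ker M$. Your ``$c^*v\psi$ with $c\in\ker M$'' and the paper's own formulas are both off by this conjugation, but nothing depends on it.
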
 
\begin{proof}
    Consider $M = R_n^*R_n$ and let $\{w_a\}_{a \in U}$ be a basis of the column space of $R_{n-\delta}$ as before. For columns corresponding to $b \not\in U$, we have a unique decomposition
    \begin{equation}\label{eq:lincomb_mon}
    w_b = \sum_{a \in U} c_{a,b} w_a,
    \end{equation}
    since $M$ is $\delta$-flat.
    Let $T_{a,b} = c_{a,b}$ be a matrix in which the rows are indexed with the same words as $M$, and the columns are indexed with $b \not \in U$. Then setting $c_{b,b} = -1$ and $c_{a,b} = 0$ for distinct $a,b \not\in U$, we have $R_n T = 0$, and the columns of $T$ form a basis for the nullspace of $R_n$. Moreover, since $\rank Z + \rank M = N$, and $\langle Z, M\rangle = 0$ by optimality, the columns of $T$ form a basis of the column space of $Z$, so $Z = T \hat Z T^*$ for some positive definite $\hat Z$. Consider the ideal generated by the generators of $\mathcal I$ together with $T_i^*v_n \psi$ for each column $T_i$. Because the vectors $w_a$ for $a \in U$ form a basis of the column space of $R_{n-\delta}$, equation \eqref{eq:lincomb_mon} implies that $b\psi$ is of degree at most $n-\delta$ in the variables $X$ after reducing it by the ideal $\mathcal J$, for every word $b \in \C\langle X \rangle_n$. Additionally, $\{a\psi : a \in U\}$ is a basis of $\C\langle X \rangle_n \psi /\mathcal J$.  

    Let $\rho_M$ be the representation defined by the action of $X$ on $R_n$, and $\rho_{S}$ the representation defined by the action on $\C\langle X\rangle \psi/\mathcal J$. First note that for $b \in U$, we have
    \[
    \rho_M(X_i)w_b = w_{X_ib} = \sum_{b \in U} c_{a, X_ib} w_a,
    \]
    where in the last equality we have $c_{a,X_ib} = \delta_{a, X_ib}$ if $X_ib \in U$. That is, in the basis $\{w_a : a \in U\}$, $\rho_M(X_i)_{a,b} = c_{a, X_ib}$ for $a,b \in U$. Furthermore, by construction, we have 
    \[
    \rho_S(X_i)b\psi = X_ib\psi = \sum_{a \in U} c_{a, X_ib} a\psi \mod \mathcal J.
    \]
    That is, in the bases $\{a\psi : a \in U\}$ and $\{w_a : a \in U\}$, $\rho_S(X_i) = \rho_M(X_i)$ entrywise for all $i$. 
\end{proof}

\ifx\compareversion\currentversion
\subsection*{Proof of Lemma~\ref{lem:eps_psi_rep}}
\else
\section{Proof of Lemma~\ref{lem:eps_psi_rep}}\label{sec:pf_lm_epspsi_rep}
\fi
\begin{lemma}[Restatement of Lemma~\ref{lem:eps_psi_rep}]
    Suppose $(X \otimes I, I \otimes Y, \psi)$ is a feasible strategy with $\beta_q - \psi^* p_3(X \otimes I, I \otimes Y)\psi \leq \varepsilon$. Then there is some $\varepsilon' = O(\varepsilon)$ such that $f_A:G \to U_{n_A}(\C)$ and $f_B:G \to U_{n_B}(\C)$ defined by
    \[
    f_A(\phi(\prod_i \gamma_i^{j_i})) = \prod_i \gamma_i(X_1, \dots, X_3)^{j_i}
    \]
    and
    \[
    f_B(\phi(\prod_i \gamma_i^{j_i})) = \prod_i \gamma_i(Y_1, \dots, Y_3)^{j_i}
    \]
    are $(\varepsilon', \psi)$-representations.
\end{lemma}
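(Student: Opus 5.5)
The plan is to turn the exact certificate from the proof of Theorem~\ref{thm:chsh_exact_violation} into approximate vector identities, and then push those identities through a Gr\"obner-basis reduction. The Gr\"obner reduction is the step I expect to need the most care.

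\emph{Step 1: approximate vanishing of the certificate vectors.} Write
\[
w_{\pi,i,j} = T_{\pi,i}^{\sf T}\begin{pmatrix} I \\ -\sqrt{3/4}\mathrm{i}I\end{pmatrix} v_{\pi,j}(X\otimes I, I\otimes Y)\psi .
\]
Evaluating the identity \eqref{eq:final_constraint} (with $Z^\pi = T_\pi\hat Z^\pi T_\pi^{\sf T}$) on the strategy uses only the relations in $\mathcal I$. These hold exactly, because $X_i^3=I$, $Y_j^3=I$, and the operators $X_i\otimes I$ and $I\otimes Y_j$ commute. The evaluation therefore gives
\[
\varepsilon \ge \beta_q-\psi^*p_3\psi=\sum_{\pi,j}\bigl\|\sqrt{\hat Z^\pi}\,(w_{\pi,i,j})_i\bigr\|^2 .
\]
Since each $\hat Z^\pi\succ 0$, letting $\mu>0$ be the smallest eigenvalue over all $\pi$ gives $\|w_{\pi,i,j}\|\le\sqrt{\varepsilon/\mu}$ for every $\pi,i,j$.

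\emph{Step 2: transfer to every element of $\mathcal J$.} I will show that every element $q$ of the ideal $\mathcal J$ from the proof of Theorem~\ref{thm:unique} satisfies $\|q(X,Y,\psi)\|\le C_q\sqrt{\varepsilon}$, where $C_q$ does not depend on $\varepsilon$. Fix a representation
\[
q=\sum_k a_k\, g_k\, b_k + (\text{elements of }\mathcal I),
\]
where each $g_k$ is one of the generators $w_{\pi,i,j}$ and each $a_k$ is a polynomial. The right factors $b_k$ only matter as words acting on $\psi$ from the left. Because $\mathcal J$ is generated by vectors of the form $(\cdot)\psi$, we may take $b_k=1$, so the relevant elements are $\sum_k a_k w_k$. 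Each word $a_k$ evaluates to a unitary, so $\|\sum_k a_k w_k\|\le(\sum_k\|a_k\|_1)\sqrt{\varepsilon/\mu}$, where $\|a_k\|_1$ is the sum of the absolute values of the coefficients. The constant is determined by the fixed certificate and the fixed reduction, so it does not depend on $\varepsilon$ or on the dimensions.

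\emph{Step 3: approximate homomorphism.} For $x=\phi(\prod_i\gamma_i^{j_i})$ and $y=\phi(\prod_i\gamma_i^{k_i})$, unitarity of $f_A(y)$ gives
\[
\|f_A(x)f_A(y)^*-f_A(xy^{-1})\|_R = \|(f_A(x)-f_A(xy^{-1})f_A(y))\otimes I\,\psi\| .
\]
It therefore suffices to bound vectors of the form $(f_A(x)f_A(y)-f_A(xy))\otimes I\,\psi$ over all pairs. Each such vector is the evaluation of the polynomial
\[
\gamma^{j}\gamma^{k}-\gamma_1^{j_1+k_1}\gamma_2^{j_2+k_2}\gamma_3^{j_3+k_3}\gamma_4^{j_4+k_4+j_2k_1},
\]
with the words $\gamma_i$ taken from \eqref{eq:Xi_to_fi}, applied to $\psi$. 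By the computation in the proof of Theorem~\ref{thm:unique}, $G$ satisfies the group relations, so each such polynomial times $\psi$ reduces to zero modulo the Gr\"obner basis of $\mathcal J$. Certifying this reduction for all $81^2$ pairs is the main obstacle. Instead of extracting explicit cofactors by hand, I would record the Gr\"obner reduction itself: it expresses each difference as an explicit finite combination $\sum_k a_k w_k$ plus elements of $\mathcal I$. Step 2 then gives norm $O(\sqrt{\varepsilon})$ for each pair, and averaging the squared norms over the finitely many pairs gives $\varepsilon'=O(\varepsilon)$.

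The same argument with the roles of $A$ and $B$ exchanged handles $f_B$, with $R=\Tr_A(\psi\psi^*)$. It uses the symmetry \eqref{eq:sym1}, or equivalently the Gr\"obner computation for the $Y$-variables, which gives the same group $G$.
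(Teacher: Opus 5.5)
Your argument is essentially the paper's: the certificate makes each generator of $\mathcal J$ have norm $O(\sqrt{\varepsilon})$ on the strategy, hence so does every element of $\mathcal J\cap\C\langle X,Y\rangle\psi$, and the multiplication table of $G$ then holds approximately on $\psi$. You differ from the appendix proof only in the last step.
\begin{itemize}
\item You reduce all $81^2$ differences $f_A(x)f_A(y)\psi-f_A(xy)\psi$ directly by Gr\"obner reduction, which is also what the paper's main-text sketch describes.
\item The appendix instead checks four families of elementary approximate relations and chains them in a collection process. These are: moving $\gamma_4$ and $\gamma_3$ past the other generators, the twisted commutation of $\gamma_2$ past $\gamma_1$, and $\gamma_j^3\approx I$. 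Each carries the trailing word needed before $\psi$, because approximate relations on $\psi$ cannot be multiplied on the right.
\end{itemize}
Your route avoids that bookkeeping. It gets membership in $\mathcal J$ for free, since Theorem~\ref{thm:unique} shows $\rho$ is a representation of $G$ on $H_{\mathcal J}$. The paper's route needs fewer, more structured checks.

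One step in your Step 3 is wrong as written. The identity $\|f_A(x)f_A(y)^*-f_A(xy^{-1})\|_R=\|(f_A(x)-f_A(xy^{-1})f_A(y))\otimes I\,\psi\|$ mixes two conventions.
\begin{itemize}
\item Dropping the right factor $f_A(y)^*$ is valid for the paper's $\|A\|_R^2=\Tr(AA^*R)$. But that quantity equals $\|(A^*\otimes I)\psi\|^2$, not $\|(A\otimes I)\psi\|^2$.
\item $\Tr(A^*AR)=\|(A\otimes I)\psi\|^2$ is only invariant under left multiplication by unitaries.
\end{itemize}
Under either convention the equality fails unless $R$ is maximally mixed.

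The reduction you want is still true; the paper leaves it implicit. Suppose $\|(f_A(a)f_A(b)-f_A(ab))\otimes I\,\psi\|\le\delta$ for all $a,b\in G$, with all operators acting as $\cdot\otimes I$.
\begin{itemize}
\item Take $b=a^{-1}$, use $f_A(e)=I$, and multiply on the left by $f_A(a)^*$. This gives $\|f_A(a)^*\psi-f_A(a^{-1})\psi\|\le\delta$.
\item Hence $f_A(x)^*\psi\approx f_A(x^{-1})\psi$.
\item Also $f_A(y)^*f_A(xy^{-1})^*\psi\approx f_A(y)^*f_A(yx^{-1})\psi\approx f_A(x^{-1})\psi$, using the homomorphism bound with $a=y$, $b=x^{-1}$ for the last step.
\end{itemize}
Together these give $\|f_A(x)f_A(y)^*-f_A(xy^{-1})\|_R\le 3\delta$, or $\le 2\delta$ with the other convention. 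So $\varepsilon'\le 9\delta^2=O(\varepsilon)$.

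Also reduce the exponents of your normal-form word modulo $3$, so that it really is $f_A(xy)$. Away from the optimum, $\gamma_i(X)^3=I$ need not hold exactly.
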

\begin{proof}
In the following, we write $p(X, Y)$ instead of $p(X \otimes I, I \otimes Y)$ when evaluating a polynomial $p$ on the strategy for notational simplicity.
Let $(X \otimes I, I \otimes Y, \psi)$ be a strategy satisfying $X_i^3 = I$ and $Y_j^3 = I$, with $\beta_{q} -\psi^*p_3(X, Y)\psi = O(\varepsilon)$.  
Then using the sum-of-squares decomposition we have
\[
\psi^* \sum_{\pi, j} v_{\pi, j}^*(X, Y)\begin{pmatrix}
    I & \sqrt{3/4}\mathrm{i}I
\end{pmatrix}T_{\pi} \hat Z_\pi T_{\pi}^{\sf T} \begin{pmatrix}
    I \\ -\sqrt{3/4}\mathrm{i}I
\end{pmatrix} v_{\pi, j}(X, Y) \psi = O(\varepsilon),
\]
so in particular
\[
\|\sqrt{\hat Z_{\pi}} T_\pi^{\sf T}\begin{pmatrix}
    I \\ -\sqrt{3/4}\mathrm{i}I
\end{pmatrix}v_{\pi, j}(X,Y) \psi \| = O(\sqrt{\varepsilon})
\]
for every $\pi, j$.
Since $\hat Z$ is fixed, the elements in $\mathcal J$ evaluated at $X, Y$ have norm $O(\sqrt{\varepsilon})$. 
In particular, the following approximate relations hold with an error of $O(\sqrt{\varepsilon})$ for the matrices $\gamma_i =\gamma_i(X_1, \dots, X_3)$: 
\begin{enumerate}
    \item $uv\otimes I\psi\approx vu\otimes I\psi$ with $v = \gamma_4^{k}$ and $u = \gamma_1^{i_1}\gamma_2^{i_2}\gamma_3^{i_3}$,
    \item $uv\gamma_4^{i_4}\otimes I\psi\approx vu\gamma_4^{i_4}\otimes I\psi$ with $v = \gamma_3^{k}$ and $u = \gamma_1^{i_1}\gamma_2^{i_2}$,
    \item $\gamma_1^{i_1}\gamma_2^{k}\gamma_3^{i_3}\gamma_4^{i_4+i_1k}\otimes I\psi \approx \gamma_2^{k}\gamma_1^{i_1}\gamma_3^{i_3}\gamma_4^{i_4}\otimes I\psi$, 
    \item $\gamma_j^{3}\prod_{l=j+1}^4\gamma_l^{i_l}\otimes I \psi \approx \prod_{l=j+1}^4\gamma_l^{i_l}\psi$,
\end{enumerate}
for $i \in \{0,1,2\}^4$ and $k \in \{1, \dots, 4\}$; the code to verify that these $f_1-f_2 \in \mathcal J$ for the approximate equations $f_1 \approx f_2$ above is available at \cite{code_chshmod3}.
We will use these approximate relations to show that
\[
(\prod_i \gamma_i^{j_i} \prod_i \gamma_i^{k_i} \otimes I) \psi \approx (\gamma_1^{j_1+k_1} \gamma_2^{j_2+k_2}\gamma_3^{j_3+k_3}\gamma_4^{j_4+k_4+j_2k_1})\otimes I\psi,
\]
where the powers are modulo $d$, with a difference of norm $O(\sqrt{\varepsilon})$. We have
\begin{equation}\label{eq:approx_equals}
\begin{aligned}
&\gamma_1^{j_1}\gamma_2^{j_2} \gamma_3^{j_3}\gamma_4^{j_4} \gamma_1^{k_1}\gamma_2^{k_2} \gamma_3^{k_3}\gamma_4^{k_4}\otimes I \psi \\
&\approx \gamma_1^{j_1}\gamma_2^{j_2} \gamma_3^{j_3} \gamma_1^{k_1}\gamma_2^{k_2} \gamma_3^{k_3}\gamma_4^{j_4+k_4}\otimes I \psi \\
&\approx \gamma_1^{j_1}\gamma_2^{j_2} \gamma_1^{k_1}\gamma_2^{k_2} \gamma_3^{j_3+k_3}\gamma_4^{j_4+k_4}\otimes I \psi \\
&\approx \gamma_1^{j_1}\gamma_2^{j_2} \gamma_1^{k_1}\gamma_2^{k_2} \gamma_3^{j_3+k_3}\gamma_4^{j_4+k_4}\otimes I \psi\\
&\approx \gamma_1^{j_1}\gamma_2^{j_2+k_2}\gamma_1^{k_1} \gamma_3^{j_3+k_3}\gamma_4^{j_4+k_4 + 2k_1k_2}\otimes I \psi \\
&\approx \gamma_1^{j_1+k_1}\gamma_2^{j_2+k_2} \gamma_3^{j_3+k_3}\gamma_4^{j_4+k_4+3k_1k_2+j_2k_1}\otimes I \psi \\
&\approx \gamma_1^{j_1+k_1}\gamma_2^{j_2+k_2} \gamma_3^{j_3+k_3}\gamma_4^{j_4+k_4+j_2k_1}\otimes I \psi,
\end{aligned}
\end{equation}
where each time we first move a term $\gamma_i^{k_i}$ to the term $\gamma_i^{j_i}$ at the front, then move the resulting product $\gamma_i^{j_i+k_i}$ to the appropriate place at the back together, and finally reduce the powers modulo $3$ (although for simplicity this is not shown in the equations). 
Since the group elements $\gamma_1$ and $\gamma_2$ do not commute, the steps where we interchange the corresponding matrices are displayed more carefully above. 

This shows that $f_A:G \to U_{n_A}$ defined by $f(\phi^{-1}(\prod_i\gamma_i^{j_i})) = \prod_i \gamma_i(X)^{j_i}$, where $\gamma_i(X)$ is defined by \eqref{eq:Xi_to_fi}, is an $(\varepsilon', \psi)$-representation for some $\varepsilon' = O(\varepsilon)$. Similarly, it can be shown that $f_B$ is also an $(\varepsilon'', \psi)$-representation for some $\varepsilon'' = O(\varepsilon)$. 
\end{proof}

\ifx\compareversion\currentversion
\subsection*{Proof of Theorem~\ref{thm:robust}}
\else
\section{Proof of Theorem~\ref{thm:robust}}\label{sec:pf_thm_robust}
\fi
\begin{theorem}[Restatement of Theorem~\ref{thm:robust}]
    Suppose that $(X \otimes I, I \otimes Y, \psi)$, where $X_i \in U_{n_A}(\C)$, $Y_i \in U_{n_B}(\C)$ and $\psi \in \C^{n_An_B}$, is a feasible strategy with $\beta_q - \psi^* p_3(X \otimes I, I \otimes Y)\psi = \varepsilon$.
    Then there is a local isometry $U = U_A \otimes U_B$ and states $\phi_1, \dots, \phi_4$ such that 
    \begin{align}
    \|U \psi &- 0_{m} \oplus \bigoplus_{i=1}^4\phi_i \otimes c_i\psi_i\| \leq O(\sqrt{\varepsilon}), \label{eq:thm:1bis}\\
    \|U X\otimes I \psi &- 0_{m} \oplus \bigoplus_{i=1}^4\phi_i \otimes (\pi_i(X) \otimes I) c_i\psi_i \| \leq O(\sqrt{\varepsilon}),\label{eq:thm:2bis} \\
    \|U I\otimes Y \psi &- 0_{m} \oplus \bigoplus_{i=1}^4\phi_i \otimes (I \otimes \sigma_i(Y)) c_i\psi_i \| \leq O(\sqrt{\varepsilon}),\label{eq:thm:3bis}
    \end{align}
    where $\sum_i c_i^2 = 1$, $c_i \geq 0$, and $m = n_An_B(|G| - \sum_i d_{\sigma_i}^2d_{\rho_i}^2)$
\end{theorem}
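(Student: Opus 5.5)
The plan follows the sketch given after the statement of Theorem~\ref{thm:robust}, in the spirit of \cite[Lemma~2.4]{cui_generalization_2020}, with every exact equality replaced by an $O(\sqrt{\varepsilon})$ estimate.

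\emph{Step 1: local isometry.} By Lemma~\ref{lem:eps_psi_rep}, $f_A$ and $f_B$ are $(\varepsilon',\psi)$-representations with $\varepsilon'=O(\varepsilon)$, using $R_A=\Tr_B(\psi\psi^*)$ and $R_B=\Tr_A(\psi\psi^*)$ respectively. Theorem~\ref{thm:GH} then gives isometries $U_A,U_B$ and representations $\tau_A,\tau_B$ with
\[
\frac{1}{|G|}\sum_x\|f_A(x)-U_A^*\tau_A(x)U_A\|_{R_A}^2\le\varepsilon'.
\]
Since $\|A\|_{R_A}=\|(A\otimes I)\psi\|$, a Markov-type bound turns this averaged estimate into $\|(f_A(x)\otimes I)\psi-(U_A^*\tau_A(x)U_A\otimes I)\psi\|=O(\sqrt{\varepsilon})$ for every $x\in G$; the factor $|G|$ is absorbed into the constant. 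The same holds for $B$. Each $X_i$ is $f_A$ of a fixed group element, namely $\phi^{-1}(X_i)$ written in normal form. Because the approximate relations in Lemma~\ref{lem:eps_psi_rep} hold on $\psi$, this gives $(X_i\otimes I)\psi\approx (f_A(\phi^{-1}X_i)\otimes I)\psi$, and we then conclude
\[
U(X_i\otimes I)\psi\approx(\tau_A(X_i)\otimes I)U\psi .
\]
This uses $UU^*\tau U\psi\approx\tau U\psi$, which follows from isometry and the same estimate, as in Vidick's argument. The $Y$ side is handled the same way.

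\emph{Step 2: decompose $U\psi$ and discard non-optimal blocks.} By Vidick's form of $\tau$, we have $\tau_A\otimes\tau_B=\bigoplus_{\pi,\sigma}I\otimes(\pi\otimes\sigma)$. Write $U\psi=\bigoplus_{\pi,\sigma}U_{\pi,\sigma}\psi$. From Step~1,
\[
\psi^*p_3(X,Y)\psi=(U\psi)^*p_3(\tau_A,\tau_B)U\psi+O(\sqrt{\varepsilon}).
\]
This is too weak, so instead I use the certificate directly. The SOS polynomials evaluated on $(\tau_A,\tau_B,U\psi)$ are within $O(\sqrt\varepsilon)$ of those evaluated on $(X,Y,\psi)$, which are $O(\sqrt\varepsilon)$ by the certificate; hence
\[
\bigl\|\sqrt{\hat Z_\pi}T_\pi^{\sf T}(\cdots)v_{\pi,j}(\tau_A,\tau_B)U\psi\bigr\|=O(\sqrt{\varepsilon}).
\]
Each block $(\pi,\sigma)$ is invariant under these operators. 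For a non-optimal pair, Theorem~\ref{thm:all_opt_strats} says that no nonzero vector in that irreducible block satisfies the ideal equations. Since the block is finite dimensional, there is a constant $c_{\pi,\sigma}>0$ such that the sum of squared norms of the certificate polynomials on any vector $w$ in the block is at least $c_{\pi,\sigma}\|w\|^2$. This gives $\|U_{\pi,\sigma}\psi\|=O(\sqrt{\varepsilon})$.

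Within an optimal block $I\otimes(\pi_i\otimes\sigma_i)$, the vectors annihilated by the certificate form exactly $\C^{k}\otimes\psi_i$, because the optimal state of an irreducible optimal strategy is unique up to scalar; this can be read off the Gröbner basis computation in Theorem~\ref{thm:unique}. The same spectral-gap argument then shows that $U_{\pi_i,\sigma_i}\psi$ is $O(\sqrt\varepsilon)$-close to some $\phi_i\otimes c_i\psi_i$ with $\phi_i$ a unit vector.

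\emph{Step 3: assembly.} Set the non-optimal components to zero, which produces the $0_m$ block, and replace the optimal components by $\phi_i\otimes c_i\psi_i$. Renormalize so that $\sum c_i^2=1$; since $\|U\psi\|=1$, the norm changes only by $O(\sqrt\varepsilon)$. This proves \eqref{eq:thm:1bis}. The bounds \eqref{eq:thm:2bis} and \eqref{eq:thm:3bis} follow by combining Step~1 with the fact that $\tau_A(X)$ acts blockwise as $\pi_i(X)\otimes I$ and is unitary, so it preserves the $O(\sqrt\varepsilon)$ error.

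The main obstacle is Step~2. Its uniform lower bounds $c_{\pi,\sigma}$ and its rigidity claim inside the optimal blocks require a careful argument that the error transfers through the isometry, together with finite checks over all irreducible pairs of $G\times G$. I would first try to establish these directly as positivity of the Gram matrix of the certificate polynomials on each irreducible block.
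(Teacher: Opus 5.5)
Your proposal is correct. It has the same skeleton as the paper's proof: Gowers--Hatami dilation, splitting $U\psi$ into $(\pi,\sigma)$ blocks, a gap on the non-optimal blocks, simplicity of the top eigenvalue on the four optimal blocks, and reassembly. Your key transfer step, however, is genuinely different.

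The paper compares Bell values. It asserts $\hat\psi^*p_3(\tau_A,\tau_B)\hat\psi=\beta_q-\varepsilon$ for $\hat\psi=U\psi$, and it treats the range of $U_A$ as $\tau_A$-invariant. It then applies the gaps $\beta_q-\beta'$ and $\beta_q-\lambda_2^i$. Theorem~\ref{thm:GH} does not give that identity, because it controls vectors only to $O(\sqrt{\varepsilon})$. As you observe, a value comparison therefore loses a square root, and a direct repair of the paper's comparison only reaches $O(\varepsilon^{1/4})$.

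Your route supplies the needed $O(\varepsilon)$ input. You push the certificate polynomials through $U$ and use that $(\tau_A\otimes I, I\otimes\tau_B)$ satisfies $\mathcal I$ exactly. This gives $\beta_q-\hat\psi^*p_3(\tau_A,\tau_B)\hat\psi=\sum\|g(\tau_A,\tau_B)\hat\psi\|^2=O(\varepsilon)$.

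Your constants coincide with the paper's. On $\C^k\otimes(V_\pi\otimes V_\sigma)$ the certificate form equals $w^*\big(I\otimes(\beta_q-p_3(\pi,\sigma))\big)w$. Hence $c_{\pi,\sigma}=\beta_q-\lambda_{\max}(p_3(\pi,\sigma))$ and the rigidity constant is $\beta_q-\lambda_2(p_3(\pi_i,\sigma_i))$, both independent of $n_A,n_B$. Your rigidity claim is exactly simplicity of the top eigenvalue of the explicit $9\times 9$ matrix $p_3(\pi_i,\sigma_i)$, which the paper also uses implicitly. Projecting with $I\otimes\psi_i\psi_i^*$ handles the multiplicity space correctly, whereas the paper writes $\hat\psi_{\pi_i,\sigma_i}$ as a product $\phi\otimes\sum_k a_k\psi^k$.

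Two points should be made precise.

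\emph{Intertwining.} The bound $\|(\tau_A(x)U_A\otimes I)\psi-(U_Af_A(x)\otimes I)\psi\|=O(\sqrt{\varepsilon})$ does not follow from the squared-norm bound of Theorem~\ref{thm:GH}. That bound only gives $\|(I-UU^*)\tau U\psi\|=O(\varepsilon^{1/4})$. Use instead the correlation form that Vidick's proof actually establishes, $\frac{1}{|G|}\sum_x\mathrm{Re}\,\psi^*\big((f_A(x)^*U_A^*\tau_A(x)U_A)\otimes I\big)\psi\ge 1-O(\varepsilon)$. The squared distance above is exactly $2$ minus twice this real part.

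\emph{Word-by-word transfer.} The transfer must be done separately for each word occurring in the certificate. Iterating your generator-level statement $U(X_i\otimes I)\psi\approx(\tau_A(X_i)\otimes I)U\psi$ does not work, because it holds only on $\psi$. Instead, reduce $w(X)\psi$ to $f_A(\bar w)\psi$ modulo $\mathcal J$ by a Gr\"obner check, where $\bar w\in G$ is the element represented by $w$. Then apply the single-element intertwining. Handle mixed words one side at a time, using that the other side acts by commuting isometries.
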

\begin{proof}
    Let $(X \otimes I, I \otimes Y, \psi)$ be as in the theorem statement. As before, we write $p(X, Y)$ instead of $p(X \otimes I, I \otimes Y)$.
    
    By Lemma~\ref{lem:eps_psi_rep}, both $f_A$ and $f_B$ are $(\varepsilon, \psi)$-representations of $G$, so by Theorem~\ref{thm:GH} there is a local isometry $U = U_A \otimes U_B$ with 
    \[
    \psi^*( f_A(x) \otimes f_B(y) - U_A^*\tau_A(x)U_A \otimes U_B^* \tau_B(y) U_B) \psi \leq \varepsilon 
    \]
    for all $x, y \in G$. Recall that we can write $\tau_A = \bigoplus_\pi I_{n_A} \otimes I_{d_\pi} \otimes \pi$ and $\tau_B = \bigoplus_\sigma I_{n_B} \otimes  I_{d_\sigma} \otimes \sigma$, where the sums run over all irreducible representations of $G$.

    We can decompose $U_A$ and $U_B$ into parts for each irreducible representation $\pi$, so that
    \[
    U_Au = \bigoplus_\pi U_{A, \pi}u,\ U_Bu = \bigoplus_\sigma U_{B, \sigma} u
    \]
    for $u \in \mathcal H_A$ and $u \in \mathcal H_B$ respectively.
    Now define 
    \begin{equation}
        \label{eq:c_pi_sigma}
    c_{\pi, \sigma} = \|U_{A, \pi} \otimes U_{B, \sigma} \psi\|^2
    \end{equation}
    and the normalized states
    \begin{equation}
        \label{eq:psi_hat}
    \hat\psi_{\pi, \sigma} = \begin{cases}
    \frac{1}{\sqrt{c_{\pi,\sigma}}}U_{A, \pi} \otimes U_{B, \sigma} \psi & \text{ if } c_{\pi, \sigma} > 0, \\
    0 & \text{ if } c_{\pi, \sigma} = 0.
    \end{cases}
    \end{equation}
    Note that $\sum_{\pi, \sigma}c_{\pi, \sigma} = 1$. Set $\hat \psi = U\psi$. 
    Then for the strategy $(\tau_A(X), \tau_B(Y), \hat \psi)$ we have
    \[
     \hat\psi^* p_3(\tau_A(X), \tau_B(Y)) \hat\psi = \sum_{\pi, \sigma}c_{\pi, \sigma} \hat\psi_{\pi,\sigma}^*( p_3(I_{n_Ad_{\pi}} \otimes \pi(X),  I_{n_Bd_{\sigma}} \otimes \sigma(X))) \hat\psi_{\pi, \sigma}, 
    \]
    which is a convex combination of the values from using the strategies $(I \otimes \pi, I\otimes \sigma, \hat\psi_{\pi, \sigma})$.

    \begin{lemma}\label{lem:estimate_ci}
    Let $(\pi_i, \sigma_i, \psi_i)$ be the optimal irreducible strategies for $p_3$, and $c_{\pi_i, \sigma_i}$ as defined in \eqref{eq:c_pi_sigma}. Then
    \begin{equation*}
    \sum_i c_{\pi_i, \sigma_i} \geq 1-O(\varepsilon).
    \end{equation*}
    \end{lemma}
        

    \begin{lemma}\label{lem:psi_dif}
    Let $(\pi_i, \sigma_i, \psi_i)$ be optimal irreducible strategies for $p_3$, and $c_{\pi_i, \sigma_i}$ and $\hat \psi_{\pi_i, \sigma_i}$ as defined in \eqref{eq:c_pi_sigma} and \eqref{eq:psi_hat}. Then there is some state $\phi_i$ such that
\begin{equation*}
    c_{\pi_i, \sigma_i}\|\hat \psi_{\pi_i, \sigma_i} - \phi_i \otimes \psi_{i}\|^2 \leq O(\varepsilon).
    \end{equation*}
    \end{lemma}
    We postpone the proofs of these lemmas until after the proof of the theorem.
    Now consider the state 
    \[
    0_{m} \oplus\bigoplus_{i}\phi_i\otimes \sqrt{c_i}\psi_{i},
    \]
    where $m=n_An_B\sum_{(\pi, \sigma) \neq (\pi_i, \sigma_i)}d_{\pi}^2d_{\sigma}^2 = n_An_B(|G| - \sum_i d_{\pi_i}^2d_{\sigma_i}^2)$, and $c_i = c_{\pi_i, \sigma_i}/(\sum_{i'} c_{\pi_{i'}, \sigma_{i'}})$. Note that it is indeed a unit vector,  $c_i \geq c_{\pi_i, \sigma_i}$, and 
    \begin{equation}
        \label{eq:c_dif}
    \sum_i |c_{\pi_i, \sigma_i} - c_i| = \sum_i c_{\pi_i, \sigma_i}(\frac{1}{\sum_{i'} c_{\pi_{i'}, \sigma_{i'}}} - 1) =  1-\sum_i c_{\pi_i, \sigma_i} \leq O(\varepsilon).
    \end{equation}
    
    Then we have
    \begin{align*}
    \| &\hat \psi - 0_m \oplus\bigoplus_{i}\phi_i\otimes \sqrt{c_i}\psi_{i}\|^2 \\
    &\leq \sum_{(\pi, \sigma) \neq (\pi_i, \sigma_i)} c_{\pi, \sigma}\|\hat \psi_{\pi, \sigma} \|^2 + \sum_i \left(c_{\pi_i, \sigma_i}\|\hat \psi_{\pi_i, \sigma_i} - \phi_i \otimes \psi_{i} \|^2 + |c_{\pi_i, \sigma_i} - c_i| \|\phi_i \otimes \psi_i\|^2\right) \\
    &\leq O(\varepsilon) + O(\varepsilon) + O(\varepsilon)
    \end{align*}
    by Lemma~\ref{lem:estimate_ci}, Lemma~\ref{lem:psi_dif} and equation~\eqref{eq:c_dif}. This proves inequality \eqref{eq:thm:1bis}.

    Next, we consider the action of an operator $X$ on $\psi$. We have
    \[
    \|X \otimes U_B \psi - U_A^*\tau_A(X)U_A \otimes U_B\psi\|^2 \leq O(\varepsilon)
    \]
    from Theorem~\ref{thm:GH}. Multiplying both terms by $U_A \otimes I$ gives
    \begin{equation}
        \label{eq:bound_UX}
    \|U_AX \otimes U_B \psi - U_AU_A^*\tau_A(X)U_A \otimes U_B\psi\|^2 \leq O(\varepsilon),
    \end{equation}
    and since $U_AU_A^*$ is a projection onto the column space of $U_A$, and $\tau_A(X)$ acts on the column space of $U_A$, we have
    \[
    U_AU_A^*\tau_A(X)U_A\otimes U_B\psi = \tau_A(X)U_A\otimes U_B\psi = \bigoplus_{\pi, \sigma} I_{n_Ad_\pi} \otimes \pi(X) \otimes I_B \sqrt{c_{\pi, \sigma}}\hat\psi_{\pi, \sigma}.
    \]
    
     Furthermore, 
     \begin{equation}
         \label{eq:dif_pipsi}
    \begin{aligned}
    \|&\bigoplus_{\pi, \sigma} I_{n_Ad_\pi} \otimes \pi(X) \otimes I_B \sqrt{c_{\pi, \sigma}}\hat\psi_{\pi, \sigma} - 0_m \oplus \bigoplus_{i=1}^4\phi_i \otimes (\pi_i(X) \otimes I) \sqrt{c_i}\psi_i\|^2 \\
    &= \sum_{(\pi, \sigma) \neq (\pi_i, \sigma_i)} c_{\pi, \sigma} \\
    & \quad + \sum_i \| I_{n_Ad_{\pi_i}} \otimes \pi_i(X) \otimes I_B \sqrt{c_{\pi_i, \sigma_i}}\hat\psi_{\pi_i, \sigma_i}  - (I \otimes \pi_i(X) \otimes I_{d_{\sigma_i}})\sqrt{c_i}\phi_i \otimes \psi_i\|^2 \\
    &\leq O(\varepsilon) + \sum_i\big( c_{\pi_i, \sigma_i}\| (I_{n_Ad_{\pi_i}} \otimes\pi_i(X) \otimes I_B)(\hat\psi_{\pi_i, \sigma_i} - \phi_i \otimes \psi_i)\|^2 \\
    & \quad +  |c_{\pi_i, \sigma_i} - c_i| \|(I_{n_A d_{\pi_i}} \otimes \pi_i(X) \otimes I_{d_{\sigma_i}} ) \phi_i \otimes \psi_i\|^2\big) \\
    &\leq O(\varepsilon) + O(\varepsilon)  + O(\varepsilon),
    \end{aligned} 
     \end{equation}
    where we used the triangle inequality, Lemma~\ref{lem:estimate_ci} and \ref{lem:psi_dif}, and equation \eqref{eq:c_dif}. 
    Using both \eqref{eq:bound_UX} and \eqref{eq:dif_pipsi} gives 
    \begin{align*}
    \|&U X\otimes I \psi - 0_m \oplus  \bigoplus_{i=1}^4 \phi_i \otimes (\pi_i(X) \otimes I) \sqrt{c_i}\psi_i \|^2 \\
    &\leq \|U X\otimes I \psi -  \bigoplus_{\pi, \sigma} I_{n_Ad_\pi} \otimes \pi(X) \otimes I_B \sqrt{c_{\pi, \sigma}}\hat\psi_{\pi, \sigma}\|^2 \\
    &+ \|\bigoplus_{\pi, \sigma} I_{n_Ad_\pi} \otimes \pi(X) \otimes I_B \sqrt{c_{\pi, \sigma}}\hat\psi_{\pi, \sigma} - 0_m \oplus \bigoplus_{i=1}^4 \phi_i \otimes (\pi_i(X) \otimes I) \sqrt{c_i}\psi_i\|^2 \leq O(\varepsilon),
    \end{align*}
    which is the desired inequality \eqref{eq:thm:2bis}. 
    
    The inequality \eqref{eq:thm:3bis} for applying an operator $Y$ can be derived similarly.
\end{proof}


    \begin{proof}[Proof of Lemma~\ref{lem:estimate_ci}]
        
    Let $\beta'$ be the maximum of $\lambda_{\max}(p_3(\pi, \sigma))$ with $\pi, \sigma$ irreducible such that $(\pi, \sigma) \neq (\pi_i, \sigma_i)$ for any $i$.  Then 
    \[
    \beta_q - \varepsilon = \hat \psi^* p_3(\tau_A(X), \tau_B(Y))\hat \psi \leq \sum_i c_{\pi_i, \sigma_i}\beta_q + \sum_{(\pi, \sigma) \neq (\pi_i, \sigma_i)}c_{\pi, \sigma} \beta'.
    \]
    Since $\sum_{\pi, \sigma} c_{\pi, \sigma} = 1$, this gives
    \[
    \sum_i c_{\pi_i, \sigma_i} \geq  1 - \varepsilon/ (\beta_q - \beta') = 1-O(\varepsilon).\qedhere
    \] 
    \end{proof}

    \begin{proof}[Proof of Lemma~\ref{lem:psi_dif}]
    Let $\phi \otimes \sum_k a_k\psi^k$ be the decomposition of $\hat \psi_{\pi_i, \sigma_i}$ into eigenvectors of $p_3(\pi_i, \sigma_i)$, where $\beta_q=\lambda_1 \geq \dots\geq \lambda_{d_{\pi_i}d_{\sigma_i}}$ are the eigenvalues of $p_3(\pi_i, \sigma_i)$ corresponding to eigenvectors $\psi^1, \dots, \psi^{d_{\pi_i}d_{\sigma_i}}$. Since $\beta_q - \psi^*p_3(X, Y)\psi \leq \varepsilon$, we have
    \begin{align*}
    \varepsilon &= \beta_q - \hat \psi^* p_3(\tau_A, \tau_B) \hat \psi \\
    &= \beta_q - \sum_{\pi, \sigma} c_{\pi, \sigma} \hat\psi_{\pi, \sigma}^*p_3(\pi, \sigma)\hat\psi_{\pi, \sigma} \\
    &= \beta_q(1-\sum_i c_{\pi_i, \sigma_i}) + \sum_{i}c_{\pi_i, \sigma_i}( \psi_{i}^* p_3(\pi, \sigma) \psi_{i} - \hat \psi_{\pi_i, \sigma_i}^*p_3(\pi_i, \sigma_i)\hat\psi_{\pi_i,\sigma_i}) \\
    & \phantom{{}={}}- \sum_{(\pi, \sigma)\neq (\pi_i, \sigma_i)} c_{\pi, \sigma}\hat \psi_{\pi, \sigma}^*p_3(\pi, \sigma)\hat\psi_{\pi,\sigma}.
    \end{align*}
    Using the eigendecomposition, we get
    \begin{align*}
    ( \psi_{i}^* p_3(\pi, \sigma) \psi_{i} - \hat \psi_{\pi_i, \sigma_i}^*p_3(\pi_i, \sigma_i)\hat\psi_{\pi_i,\sigma_i}) &= \lambda_1 - \sum_k \lambda_k a_k^2 \\
    &\geq \lambda_1(1-a_1^2) - \lambda_2\sum_{k=2}^{d_{\pi_i}d_{\sigma_i}} a_k^2 \\
    &= (\lambda_1 - \lambda_2)(1-a_1^2) \\
    &\geq (\lambda_1 - \lambda_2)(1-a_1),
    \end{align*}
    since $\sum_k a_k^2 = 1$ and $x^2 \leq x$ for $x \in [0, 1]$. Note that 
    \begin{align*}
    a_1 = (\phi\otimes\psi_{i})^*\hat \psi_{\pi_i, \sigma_i} = 1-\frac{1}{2}\|\hat \psi_{\pi_i, \sigma_i} - \phi \otimes \psi_{i}\|^2.
    \end{align*}
    Together, this gives
    \begin{align*}
   \sum_{i} &c_{\pi_i, \sigma_i}\frac{\beta_q -\lambda_2^i}{2}\|\hat \psi_{\pi_i, \sigma_i} - \phi_i \otimes \psi_{i}\|^2 \\
   &\leq \sum_{i} c_{\pi_i, \sigma_i} ( \psi_{i}^* p_3(\pi, \sigma) \psi_{i} - \hat \psi_{\pi_i, \sigma_i}^*p_3(\pi_i, \sigma_i)\hat\psi_{\pi_i,\sigma_i}) \\
   &= \varepsilon - \beta_q (1-\sum_i c_{\pi_i, \sigma_i}) +\sum_{(\pi, \sigma)\neq (\pi_i, \sigma_i)} c_{\pi, \sigma}\hat \psi_{\pi, \sigma}^*p_3(\pi, \sigma)\hat\psi_{\pi,\sigma} \\
   &\leq \varepsilon + \frac{\beta'}{\beta_q-\beta'}\varepsilon = O(\varepsilon)
    \end{align*}
    by Lemma~\ref{lem:estimate_ci}.
    In particular, for each $i$, we have
    \[
    c_{\pi_i, \sigma_i} \|\hat \psi_{\pi_i, \sigma_i} - \phi_i \otimes \psi_{i}\|^2 \leq O(\varepsilon). \qedhere
    \]
    \end{proof}

\newcommand{\etalchar}[1]{$^{#1}$}

\end{document}